\theoremstyle{plain}
\newtheorem{thm}{Theorem}[section]
\newtheorem*{thm*}{Theorem}
\newtheorem{prop}[thm]{Proposition}
\newtheorem*{prop*}{Proposition}
\newtheorem{lemma}[thm]{Lemma}
\newtheorem*{lemma*}{Lemma}
\newtheorem{coro}[thm]{Corollary}
\newtheorem*{coro*}{Corollary}
\newtheorem*{thmA}{Theorem~\ref{thm:main}}
\newtheorem*{thmB}{Theorem~\ref{thm:oddcardinality}}
\newtheorem*{thmD}{Theorem~\ref{thm:twist}}
\theoremstyle{definition}
\newtheorem{dfn}[thm]{Definition}
\newtheorem*{dfn*}{Definition}
\newtheorem{rem}[thm]{Remark}
\newtheorem*{rem*}{Remark} 
\newtheorem*{ex*}{Example}
\newtheorem{hyps}[thm]{Hypotheses}
\newtheorem{exm}{Example}
\DeclareMathOperator{\Jac}{Jac}
\DeclareMathOperator{\Frac}{Frac}
\DeclareMathOperator{\End}{End}
\DeclareMathOperator{\C}{\mathbb{C}}
\DeclareMathOperator{\F}{\mathbb{F}}
\DeclareMathOperator{\K}{\mathbb{K}}
\DeclareMathOperator{\J}{\mathcal{J}}
\DeclareMathOperator{\Tr}{Tr}
\DeclareMathOperator{\Q}{\mathbb{Q}}
\DeclareMathOperator{\R}{\mathbb{R}}
\DeclareMathOperator{\Z}{\mathbb{Z}}
\DeclareMathOperator{\Gal}{Gal}
\DeclareMathOperator{\Div}{Div}
\DeclareMathOperator{\Pic}{Pic}
\DeclareMathOperator{\Princ}{Princ}
\DeclareMathOperator{\im}{Im}
\DeclareMathAlphabet{\mathpzc}{OT1}{pzc}{m}{it}
\DeclareMathOperator{\coho}{H}
\DeclareMathOperator{\Cl}{Cl}
\DeclareMathOperator{\Sel}{Sel}
\DeclareMathOperator{\sign}{sign}
\DeclareMathOperator{\Hom}{Hom}
\DeclareMathOperator{\Art}{Art}
\DeclareMathOperator{\cond}{cond}
\DeclareMathOperator{\Ind}{Ind}
\newcommand{\cA}{\mathcal{A}}
\newcommand{\cC}{\mathcal{C}}
\newcommand{\cN}{\mathcal{N}}
\newcommand{\cO}{\mathcal{O}}
\newcommand{\cS}{\mathcal{S}}
\newcommand{\disc}{\Delta}
\newcommand{\HK}{(A_{K}^\times /(A_{K}^\times)^2)_{\square}}
\newcommand{\HKv}{(A_{K_v}^\times /(A_{K_v}^\times)^2)_{\square}}
\newcommand{\HO}{(A_{\cO}^\times /(A_{\cO}^\times)^2)_{\square}}
\newcommand{\lmfdbec}[3]{\href{http://www.lmfdb.org/EllipticCurve/Q/#1/#2/#3}{{\text{\rm#1.#2#3}}}}
\newcommand{\Vlocal}{\mathbb{V}}
\numberwithin{equation}{section}
\def\<#1>{{\left\langle{#1}\right\rangle}}
\def\abs#1{{\left|{#1}\right|}}
\def\Z{{\mathbb Z}}             
\def\Q{{\mathbb Q}}             
\def\id#1{{\mathfrak{#1}}}      
\DeclareMathOperator{\trace}{{\mathrm{Tr}}}
\author{Daniel Barrera Salazar}
\address{Universidad de Santiago de Chile, Alameda 3363, Santiago, Chile.}
\email{danielbarreras@hotmail.com}
\thanks{DBS was supported by the MathAMSUD 2020018, FONDECYT 11201025 and PAI 77180007}
\author{Ariel Pacetti}
\address{Center for Research and Development in Mathematics and Applications (CIDMA),
	Department of Mathematics, University of Aveiro, 3810-193 Aveiro, Portugal}
\email{apacetti@ua.pt}
\thanks{AP was partially supported by FonCyT BID-PICT 2018-02073 and by
the Portuguese Foundation for Science and Technology (FCT) within
project UIDB/04106/2020 (CIDMA)}
\author{Gonzalo Tornar{\'\i}a}
\address{Universidad de la Rep\'ublica, Montevideo, Uruguay}
\email{tornaria@cmat.edu.uy}
\thanks{GT was partially supported by CSIC--I+D 2020/651}
\keywords{$2$-Selmer group, quadratic twists.}
\subjclass[2010]{Primary: 11G05, Secondary: 11G40}
\begin{document} 

\title{On the $2$-Selmer group of Jacobians of hyperelliptic curves.}
 
\begin{abstract} Let $\cC$ be a hyperelliptic curve $y^2=p(x)$ defined
  over a number field $K$ with $p(x)$ integral of odd degree. The purpose
  of the present article is to prove lower and upper bounds for the
  $2$-Selmer group of the Jacobian of $\cC$ in terms of the class
  group of the $K$-algebra $K[x]/(p(x))$. Our main result is a formula
  relating these two quantities under some mild hypothesis. We provide
  some examples that prove that our lower and upper bounds are as
  sharp as possible.

  As a first application, we study the rank distribution of the
  $2$-Selmer group in families of quadratic twists. Under some extra
  hypothesis we prove that among prime quadratic twists, a positive
  proportion has fixed $2$-Selmer group. As a second application, we
  study the family of octic twists of the genus $2$ curve $y^2=x^5+x$.
\end{abstract} 

\maketitle



\section{Introduction}
Let $K$ be a number field and $\cC$ be a hyperelliptic curve over $K$ given by
\begin{equation}
  \label{eq:hyperel}
  \cC: y^2 = p(x) = x^d+a_{d-1}x^{d-1}+ a_{d-2}x^{d-2} + \cdots + a_1 x +a_0,
\end{equation}
where $d$ is odd and the coefficients are integral (we do not assume
$p(x)$ irreducible).  Without loss of generality, we will assume
that $a_{d-1}$ is divisible by all prime ideals over $2$ of $K$ .
Let $J$ denote the Jacobian of $\cC$ and $g$ its genus. By Mordell's
theorem, we know that the abelian group $J(K)$ is finitely generated.
It is an important problem to determine its rank,
namely the rank of its free part.

The standard algorithm to compute the rank of $J(K)$ is the so called
``descent'' method. The idea behind the $2$-descent method is that the short exact sequence
\[
  \xymatrix{
    1 \ar[r] &J[2] \ar[r]&J(\overline{K}) \ar[r]^{\times 2}&J(\overline{K}) \ar[r]& 1,
  }
\]
where $J[2]$ denotes the $2$-torsion points on $J(\overline{K})$,
induces a short exact sequence in cohomology
$ J(K)/2J(K) \hookrightarrow H^1(\Gal_K,J[2])$ (here $\Gal_K$ denotes the absolute Galois group $\Gal(\overline{\Q}/K)$). The so called
\emph{$2$-Selmer group}, denoted by $\Sel_2(J)$ (whose definition is
recalled in Definition~\ref{defi:2-Selmer}), is a subgroup of the cohomology group
$H^1(\Gal_K,J[2])$. It is a finite dimensional $\F_2$-vector space
whose understanding provides deep information of the rank of $J(K)$ (its dimension equals the rank of $J(K)$ plus the dimension of the elements of order $2$ in the Tate-Shafarevich group).

The pioneer work of Brumer and Kramer (\cite[Proposition 7.1]{Brumer})
gives an upper bound for the order of the $2$-Selmer group of an elliptic curve
\[
y^2=f(x)=x^3+ax+b,
\]
in terms of the $2$-rank of the class group of $\Q[x]/(f(x))$ when
$f(x)$ is irreducible (see also \cite{MR1370197}). One is led to
expect that a similar phenomena should hold in general, namely the
order of $\Sel_2(J)$ should be related to a ray class group of the
$K$-algebra $K[x]/(p(x))$.  
In \cite{MR3934463} Chao Li gave not only an upper bound, but
also a lower bound of the $2$-Selmer group of a rational elliptic
curve in terms of the class group of $K[x]/(p(x))$ (under some
hypothesis). Li's result was generalized to general number fields
under less restricted hypothesis in \cite{2001.02263}. Moreover, in
\cite{2001.02263} we provided a general framework which could be
applied to more general situations, like the case of hyperelliptic
curves $\cC$. In the present article we pursuit this goal, obtaining a
similar result. More precisely in this work: 

\begin{enumerate}
\item We obtain general bounds for $\mathrm{dim}_{\F_2}(\Sel_2(J))$.
\item We obtain applications related to quadratic twists of hyperelliptic curves and certain families of hyperelliptic curves. 
\end{enumerate}

\subsection{Bounding the \texorpdfstring{$2$}{2}-Selmer group}

Attached to the curve $\cC$ with equation $\cC:y^2 = p(x)$ we consider
the étale $K$-algebra $A_K=K[x]/(p(x))$.
Our main result gives a lower and upper bound for $\mathrm{Sel}_{2}(J)$ in terms of a $2$-class group similar to the one obtained in \cite{2001.02263}. We denote by $\Cl(A_K)$ the class group of $A_K$ as defined in \S\ref{s: main theorems} and we consider a group
$\Cl_*(A_K,\cC)$
between the classical class group
$\Cl(A_K)$ and the narrow class group
(see definition~\ref{dfn:class group}).
Our main result is the following.

\begin{thmA}
Let $K$ be a number field and $\cC/K$ be a hyperelliptic
curve. Suppose that hypotheses~\ref{hyp:hyp} hold. Then
\begin{multline*}
  \dim_{\F_2}\Cl_*(A_K,\cC)[2]
  - \sum_{v \mid 2} \bigl(r_v - 1 - \dim_{\F_2}(\Vlocal_v)\bigr)
  \quad  \\
  \le \quad \dim_{\F_2}\Sel_2(J)
  \quad \le \quad \dim_{\F_2}\Cl_*(A_K,\cC)[2] +
  g\,[K:\Q]\,.
\end{multline*}
\end{thmA}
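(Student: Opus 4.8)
The plan is to run the standard $2$-descent machinery through the étale algebra $A_K = K[x]/(p(x))$, following the framework of \cite{2001.02263}. Since $p(x)$ has odd degree, the $2$-torsion $J[2]$ admits an explicit Galois-module description in terms of the Weierstrass points, and the ``$x-T$'' map of Cassels--Schaefer--Poonen--Schaefer identifies $H^1(\Gal_K, J[2])$ with the square-class group $\HK$, where the subscript $\square$ records the condition that the norm down to $K$ be a square. Under this identification the Kummer map $J(K)/2J(K)\hookrightarrow H^1(\Gal_K,J[2])$ becomes the familiar map $P\mapsto x(P)-T$, and the $2$-Selmer group is cut out by local conditions:
\[
  \Sel_2(J) = \{\alpha\in\HK : \mathrm{loc}_v(\alpha)\in\Vlocal_v \text{ for every place } v\},
\]
where $\Vlocal_v\subseteq\HKv$ denotes the image of the local Kummer map $J(K_v)/2J(K_v)\hookrightarrow\HKv$. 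The problem thus reduces to comparing this group of locally soluble square classes with the $2$-torsion of the class group.

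First I would split the places of $K$ into the finite bad set $S=\{v\mid 2\infty\}\cup\{v\text{ of bad reduction}\}$ and the remaining good places. At each good place the hypotheses~\ref{hyp:hyp} guarantee that the local condition $\Vlocal_v$ coincides with the unramified subgroup of $\HKv$, i.e.\ the image of the local units; imposing this at all good places simultaneously produces the group of everywhere-unramified square classes, whose global incarnation is $\HO$. The definition of $\Cl_*(A_K,\cC)$ is engineered precisely so that its $2$-torsion equals the subgroup of $\HK$ that is unramified at the good places and satisfies, at the archimedean and odd bad places, the sign and ramification conditions interpolating between the wide and the narrow class group. Concretely I would realise $\Cl_*(A_K,\cC)[2]$ as the cokernel in the usual units--square-classes--class-group exact sequence
\[
  1\to \cO_{A_K}^\times/(\cO_{A_K}^\times)^2 \to U \to \Cl_*(A_K,\cC)[2]\to 1,
\]
where $U\subseteq\HK$ is the subgroup of square classes unramified at every good place and satisfying the prescribed $S$-conditions.

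With this in hand, the two bounds follow from a place-by-place dimension count comparing the actual local image $\Vlocal_v$, for $v\in S$, against the default unramified group cut out inside $\HKv$. Writing $r_v$ for the number of places of $A_K$ above $v$, the comparison (unramified, norm-a-square) group at a dyadic $v$ has dimension $r_v-1$ — the $r_v$ factors less one relation from the norm condition — so the discrepancy $r_v-1-\dim_{\F_2}\Vlocal_v$ measures exactly the shortfall of the dyadic Selmer condition relative to the class-group contribution, and summing these defects over $v\mid2$ yields the subtractive term in the lower bound. For the upper bound one notes that, the archimedean notion of ramification being trivial, allowing the local conditions $\Vlocal_v$ at $v\mid\infty$ can enlarge the Selmer group by at most $\dim_{\F_2}J(K_v)/2J(K_v)\le g\,[K_v:\R]$, and $\sum_{v\mid\infty}g\,[K_v:\R]=g\,[K:\Q]$; the odd bad places contribute nothing beyond what is already absorbed into the definition of $\Cl_*$.

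The main obstacle is the analysis at the dyadic places $v\mid 2$: there the image $\Vlocal_v$ of the local Kummer map is genuinely delicate, being governed by wild ramification and by the group structure of $J(K_v)$, and it need not equal the unramified group. The crucial technical step is therefore to bound $\dim_{\F_2}\Vlocal_v$ from both sides and to relate it to $r_v-1$, so that the entire dyadic contribution is controlled by the single correction $r_v-1-\dim_{\F_2}\Vlocal_v$. Once these local dimensions are established, both inequalities drop out of the standard Greenberg--Wiles (Poitou--Tate) dimension formula for Selmer groups applied to the comparison between the conditions defining $\Sel_2(J)$ and those defining $\Cl_*(A_K,\cC)[2]$.
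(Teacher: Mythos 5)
Your overall framework (descent through the \'etale algebra $A_K$, Cassels' identification of $H^1(\Gal_K,J[2])$ with $\HK$, and a comparison of the local Selmer conditions with unramifiedness conditions defining a modified class group) is indeed the skeleton of the argument, but the proposal has two genuine gaps, and one of them sits exactly where the theorem gets its specific shape. First, you misread the correction term: in the statement, $\Vlocal_v$ is \emph{not} the image of the local Kummer map inside $\HKv$ (that image has $\F_2$-dimension $r_v-1+[K_v:\Q_2]\,g$ by Lemma~\ref{lemma:orders}); it is the subspace $\langle \trace_{k_i/k}(\overline{T}_i)\rangle$ of the residue field $k$. The quantity $r_v-1-\dim_{\F_2}\Vlocal_v$ arises as the index $[U_{4,v}:W_v]$, where $U_{4,v}$ (units $\equiv\square\bmod 4$ with square norm, of dimension $r_v-1$) is the dyadic local condition imposed by ``$A_K(\sqrt{\alpha})$ unramified'' and $W_v$ is a specific subgroup that one can actually prove lies in $\im(\delta_{K_v})$. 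Proving $W_v\subset\im(\delta_{K_v})$ requires \emph{constructing} rational points: one expands $\cC$ at infinity in the uniformizer $z=y/x^{(d+1)/2}$, evaluates at $z=2w$ for $w\in\cO_v$, and computes that $\delta_{K_v}(P-\infty)=[(1-4T_1w^2,\ldots,1-4T_{r_v}w^2)]$, whose image under the isomorphism $U_{4,v}/(A_{\cO_v}^\times)^2\simeq\cS$ spans a space of dimension $\dim_{\F_2}\Vlocal_v$ by nondegeneracy of the trace form. You correctly flag this dyadic analysis as ``the crucial technical step,'' but you do not supply it, and no dimension count or Greenberg--Wiles bookkeeping can replace it: the lower bound genuinely needs points on $J(K_v)$.

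Second, the upper bound also needs a nontrivial input you have not provided: under hypothesis (\dag), the image of $\delta_{K_v}$ at \emph{every} finite place lands in the integral classes $(A_{\cO_v}^\times/(A_{\cO_v}^\times)^2)_\square$. This is Theorem~\ref{thm:integrality}, proved via Stoll's argument with the ideal $I_D=(q(T),R(T))$, the identity $I_D^2=(q(T))$, and the Gorenstein property of $\cO[T]$; it is precisely where (\dag.i)/(\dag.ii) are used, and your phrase ``the hypotheses guarantee that the local condition coincides with the unramified subgroup'' assumes the conclusion. Finally, your attribution of the slack $g\,[K:\Q]$ purely to archimedean places is not how the count works: in the paper it is the index $[\tilde{C}(\cC):C_*(\cC)]$, bounded by a global computation combining the sign map at real places, Dirichlet's unit theorem, and the order of $\HO$ (Lemma~\ref{lemma:orderHO}); the $2^{g[K:\Q]}$ ultimately reflects the index $2^{[K_v:\Q_2]\,g}$ of the local Kummer image inside the integral classes at $v\mid 2$, not an archimedean enlargement. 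As written, the proposal is a plausible outline of the strategy but does not constitute a proof.
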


The lower bound includes local correction
terms (possibly zero) at places over $2$ defined as follows.
For $v\mid 2$ let $K_v$ be the completion of $K$ at $v$ and $k$ its
residue field. Over $K_v$ the polynomial $p(x)$ factors as
$p(x) = p_{v, 1}(x) \cdots p_{v, r_v}(x)$ so that
$K_{v, i} = K_v[x]/p_{v, i}(x)$ is a field extension of $K_v$.
We denote $k_i$ the residue field of $K_{v, i}$ and let
$\overline{T}_i$ be the image of $x$ in $k_i$.
The space $\Vlocal_v$ is defined as follows:
\[
  \Vlocal_v = \langle \trace_{k_i/k}(\overline{T}_i) \ : \ i= 1, ...,
  r_v\rangle \subset k.
\]
Under our assumption on $a_{d-1}$ we have $\dim_{\F_2}\Vlocal_v \leq
r_v-1$ (see Lemma~\ref{lemma:condition}) so the last terms at the left hand side are all non-negative.

The best lower bound occurs when $\Vlocal_v$ has dimension equal to $r_v-1$
for all $v\mid 2$,
in which case the difference between the upper and the lower bound
equals $g\,[K:\Q]$, exactly as in the case of elliptic curves studied in
\cite[Theorem 2.16]{2001.02263}.
For example, if $p(x)$ is
irreducible over $K_v$ then $r_v=1$ and $\Vlocal_v = \{0\}$.
Assuming the parity conjecture we can then deduce from our bounds the
precise $2$-Selmer rank when $g\,[K:\Q]=1$, and also when $g\,[K:\Q]=2$ and
the root number has the right parity.

To our knowledge, the only previous general result to bound the
$2$-Selmer group of a hyperelliptic curve is due to Stoll. In
\cite{MR1829626} Stoll developed a very nice algorithm to compute the
$2$-Selmer group of an hyperelliptic curve, and as a Corollary of his
results (\cite[Lemma 4.10]{MR1829626}), he obtained an upper bound for
the $2$-Selmer group similar to the one obtained by  Brumer and Kramer (see also
\cite{MR1326746}). A similar upper bound was obtained in
\cite{MR4158587} in the particular situation where $p(x)$ is the
minimal polynomial of $\zeta_p + \zeta_p^{-1}$ (where $\zeta_p$
denotes a $p$-th root of unity) under the assumption that $(p-1)/2$ is
a prime number. Note that our result improves theirs in the sense that
we get the same upper bound when $p \equiv 3 \pmod 4$ (although we do
not need to impose the condition $(p-1)/2$ to be a prime number), but
we also get a lower bound.

\bigskip


The proof of our main result has two key ingredients. We start considering the long exact sequence in cohomology
\begin{equation}
  \label{eq:longexactsequence}
  \xymatrix{
    1 \ar[r]&  J(K)/2J(K) \ar[r] & \coho^1(\Gal_K,J[2]) \ar[r]& \coho^1(\Gal_K,J)[2].
  }
\end{equation}
By a result of Cassels, the cohomology group $\coho^1(\Gal_K,J[2])$ is
isomorphic to $\HK$. To get the upper bound, we need  to assure
that any cocycle in $\coho^1(\Gal_{K_v},J[2])$ coming from $J(K_v)$ is
unramified at any odd place of $A_K$. Under Cassels isomorphism,
this is equivalent to proving 
that the image of any
point $P$ in $J(K_{v})$,
a priori in $(A_{K_{v}}^\times /(A_{K_{v}}^\times)^2)_{\square}$,
actually belongs
to the class of integral elements
$(A_{\cO}^\times / (A_{\cO}^\times)^2)_{\square}$
(a purely local computation, see Theorem~\ref{thm:integrality}).
The hypotheses imposed are the ones needed for this statement to be true.

A second key ingredient is needed to get the lower bound: we need to 
construct points on $J(K_{v})$. Luckily enough (by dimension reasons)
this last hard problem only needs to be done at primes dividing
$2$. The spaces $\Vlocal_v$ appearing in Theorem~\ref{thm:main} play a
crucial role in this construction.

\subsection{Applications}
The present article contains two different applications of our main
result. 
The first one concerns the study of quadratic twists of
hyperelliptic curves. If $a \in K^\times$, then the quadratic twist
of our hyperelliptic curve $\cC$ by $a$ is the curve
\[
  \cC(a): ay^2 = p(x).
\]
If the polynomial $p(x)$ is irreducible, and the number
$a$ is divisible only by prime ideals which are inert or ramified in
$A_K/K$, then the curve $\cC(a)$ also satisfies the hypothesis of our
main theorem as proved in Lemma~\ref{lemma:twist}. In particular,
the rank of the $2$-Selmer group of $\cC(a)$ also satisfies the same
bounds as $\cC$ does.
This allows us to prove:
\begin{thmD} Let $\cC$ be an hyperelliptic curve satisfying
  hypotheses~\ref{hyp:hyp} over a number field $K$ with odd narrow
  class number. Suppose that $p(x)$ is irreducible, and suppose
  furthermore that there is a principal prime ideal of $K$ which is
  inert in $A_K/K$.
  Then among all quadratic
  twists by principal prime ideals, there exists a subset of positive
  density ${\mathscr S}$ such that the abelian varieties $\Jac(\cC(a))$
  have the same $2$-Selmer group for all $a\in {\mathscr S}$.
\end{thmD}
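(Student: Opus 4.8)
The plan is to derive the statement from Theorem~\ref{thm:main} by a pigeonhole argument over a positive-density family of primes: the two bounds of that theorem will confine the $2$-Selmer rank of every admissible twist to one fixed finite interval, so some value in that interval must be attained with positive density.

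First I would isolate the admissible primes, namely the principal prime ideals $\fp=(\pi)$ of $K$ that are inert in $A_K/K$ and coprime to $2\disc(p)$. For each such $\pi$, Lemma~\ref{lemma:twist} guarantees that $\cC(\pi)$ again satisfies hypotheses~\ref{hyp:hyp}, so Theorem~\ref{thm:main} applies to the whole family at once. To see that these primes have positive density I would invoke Chebotarev: ``$\pi$ inert in $A_K/K$'' asks that $\mathrm{Frob}_{\fp}$ act as a $d$-cycle on the roots of $p$, a union of conjugacy classes in $\Gal(L/K)$ with $L$ the splitting field of $p$, while ``$(\pi)$ principal'' asks that $\mathrm{Frob}_{\fp}$ be trivial in $\Gal(H/K)$ with $H$ the Hilbert class field of $K$; the standing hypothesis supplies one prime satisfying both, so the relevant Frobenius set in $\Gal(LH/K)$ is non-empty and therefore carries positive density.

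Next I would show that both bounds of Theorem~\ref{thm:main} stay inside one fixed interval as $\pi$ ranges over admissible primes. The substitution $X=\pi x$ identifies the étale algebra $K[X]/\bigl(\pi^{d}p(X/\pi)\bigr)$ attached to $\cC(\pi)$ with $A_K=K[x]/(p(x))$, so the classical and narrow class groups of the twisted algebra are isomorphic to those of $A_K$; since $\Cl_*(A_K,\cC(\pi))$ is squeezed between them (definition~\ref{dfn:class group}), the quantity $\dim_{\F_2}\Cl_*(A_K,\cC(\pi))[2]$ takes only finitely many values. Here the odd narrow class number of $K$ is decisive: it forces the units of $K$ to realise every sign pattern, so that after fixing the signs of $\pi$ at the real places of $K$---one more positive-density constraint---the sign conditions at the real places defining $\Cl_*$ are pinned down and this class-group term does not drift with $\pi$. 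Because $\pi$ is a unit at every $v\mid 2$, the factorisation of $\pi^{d}p(X/\pi)$ over $K_v$ has the same shape as that of $p$ and the trace space is only rescaled, $\Vlocal_v(\cC(\pi))=\overline{\pi}\,\Vlocal_v(\cC)$, so the correction terms $r_v-1-\dim_{\F_2}\Vlocal_v$ and the quantity $g\,[K:\Q]$ are unchanged. Hence $\dim_{\F_2}\Sel_2(\Jac(\cC(\pi)))$ lies in a single interval $[N_{\min},N_{\max}]$ for all admissible $\pi$.

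Finally I would conclude by pigeonhole: partitioning the positive-density set of admissible primes according to the finitely many possible values of $\dim_{\F_2}\Sel_2(\Jac(\cC(\pi)))$ yields a class $\mathscr{S}$ of positive density on which this dimension is constant, and since the $2$-Selmer groups are then $\F_2$-vector spaces of equal dimension they are all isomorphic, as claimed. The step I expect to be the real obstacle is not the counting but the uniform control in the previous paragraph: one must check that twisting by $\pi$ does not silently enlarge the narrow-class datum entering Theorem~\ref{thm:main}, and it is exactly the odd narrow class number hypothesis---through the freedom to adjust generators by units of prescribed sign---that keeps $\Cl_*(A_K,\cC(\pi))$, and with it the whole interval $[N_{\min},N_{\max}]$, independent of $\pi$.
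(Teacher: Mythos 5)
Your overall strategy (Chebotarev to get positive density, Lemma~\ref{lemma:twist} to keep the twists inside hypotheses~\ref{hyp:hyp}, Theorem~\ref{thm:main} for uniform bounds, then pigeonhole) is the same as the paper's, and the density argument is fine. But there is a genuine gap at the final step: you pigeonhole on the \emph{dimension} of $\Sel_2(\Jac(\cC(\pi)))$ and then conclude that equal-dimensional $\F_2$-vector spaces are ``isomorphic, as claimed.'' That is not what the theorem asserts. All of these Selmer groups live inside the \emph{same} ambient group $(A_K^\times/(A_K^\times)^2)_{\square}$ (the \'etale algebra of the twist is canonically identified with $A_K$), and ``the same $2$-Selmer group'' means literally the same subgroup there. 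Under your reading the statement is nearly vacuous: any uniform upper bound on the dimension plus pigeonhole already gives ``isomorphic as abstract $\F_2$-vector spaces,'' with no need for the twisting machinery at all.

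The paper's proof avoids this by pigeonholing on the subgroup itself: for every admissible $a$ one has the sandwich
$C_W(\cC) \subset \Sel_2(\Jac(\cC(a))) \subset \tilde{C}(\cC)$
with the two outer groups \emph{fixed} (attached to the original curve, not to the twist), and a finite group has only finitely many intermediate subgroups, so a positive-density subset of twists shares the identical Selmer subgroup. You in fact already have the ingredients for this --- your second paragraph is devoted to showing that the class-group datum and the local terms do not drift with $\pi$ --- but by collapsing that information to the two numbers $N_{\min},N_{\max}$ you throw away exactly what is needed. A side effect is that your worry about ``uniform control of the narrow-class datum'' largely evaporates once you phrase the bounds as containments between fixed subgroups of $A_K^\times/(A_K^\times)^2$ rather than as numerical inequalities.
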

To our knowledge this is the first general result regarding $2$-Selmer
group distributions in quadratic twists of hyperelliptic curves.

\medskip
A second application comes from a particular family of hyperelliptic
curves considered in \cite{MR4231527}. Let $a$ be a non-zero
integer, and consider the genus $2$ hyperelliptic curve over $K= \Q$
\[
\cC(a): y^2 = x^5+ax.
\]
Note that in this case, the polynomial on the right hand side is
reducible. The surface $\Jac(\cC(a))$ has some very interesting
properties. For example, it has complex multiplication by
$\Z[\zeta_8]$ (over the extension $\Q(\zeta_8)$), but it is also
isogenous to the product of two elliptic curves over the field
$\Q(\sqrt[4]{a})$ (see Corollary~\ref{coro:splitting}). In particular,
they are all octic twists of the curve $\cC(1)$. 

What can be said of the rank of the surface $\Jac(\cC(a))$?  The
point $(0,0)$ has order $2$, giving a point in its $2$-Selmer
group. It follows from Lemma~\ref{lemma:dagacondition} that if $a$ is
square-free and $a \equiv 1\pmod 4$, then $\cC(a)$ satisfies the
hypothesis of Theorem~\ref{thm:main} at all primes except at the primes
$p$ dividing $a$. 
Still, one
can provide an upper bound of the form
\[
\dim_{\F_2}\Sel_2(\Jac(\cC(a))
\le \dim_{\F_2}\Cl_*(A_\Q,\cC(a))[2]+2 + \#\{p \; : \; p \mid a\}.
\]
To give a complete description of the $2$-Selmer rank of
$\Jac(\cC(a))$, we need to understand the class group of
$\Q(\sqrt[4]a)$. Although one expects that such a class group should
be well understood, we could not find any reference in this
direction. 
\begin{thmB}
If $p$ is an odd prime, $p\equiv 3 \pmod 8$ then $\Cl(\Q[\sqrt{-1},\sqrt[4]{p}])$ has odd cardinality.
\end{thmB}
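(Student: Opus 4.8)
The plan is to prove the stronger statement that the $2$-part of $\Cl(L)$ is trivial, where $L=\Q(\sqrt{-1},\sqrt[4]{p})$ and I write $i:=\sqrt{-1}$. The basic reduction is the observation that for a quadratic extension $M/F$ with Galois group $G=\langle\sigma\rangle$, the class number $h_M$ is odd if and only if the group of ambiguous ideal classes $\Cl(M)^{G}$ has odd order: if $h_M$ were even, then the $2$-Sylow subgroup of $\Cl(M)$ would be a nontrivial $2$-group on which $\sigma$ acts, hence would have nontrivial fixed points, forcing $\#\Cl(M)^{G}$ to be even. Since $L$ is totally imaginary, its class group coincides with its narrow class group, so no archimedean subtleties arise. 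I would apply this criterion along the tower $\Q(i)\subset K_0:=\Q(i,\sqrt{p})\subset L$, each step a quadratic extension of totally imaginary fields, using Chevalley's ambiguous class number formula
\[
  \#\Cl(M)^{G}=\frac{h_F\,\prod_v e_v}{2\,[E_F:E_F\cap N_{M/F}M^\times]}
\]
at each stage (product over all places $v$ of $F$, with $e_v$ the ramification index and $E_F$ the unit group).

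For the first step I would analyze the ramification in $K_0/\Q(i)$. Since $p\equiv 3\pmod 8$ we have $-p\equiv 5\pmod 8$, so $\Q_2(\sqrt{-p})$ is the unramified quadratic extension of $\Q_2$; consequently the prime $(1+i)$ is inert (unramified) in $K_0/\Q(i)$, and the only ramified place is the prime above $p$ (which is inert in $\Q(i)/\Q$), with $e=2$. As $h_{\Q(i)}=1$, Chevalley's formula gives $\#\Cl(K_0)^{G}=1/[E_{\Q(i)}:E_{\Q(i)}\cap N K_0^\times]$, and integrality forces the denominator to be $1$; hence $h_{K_0}$ is odd, and every unit of $\Q(i)$ is a norm from $K_0$.

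For the second step, $L=K_0(\sqrt[4]{p})$, I would first pin down the ramification. A Galois-theoretic argument using $\Gal(L/\Q)\cong D_4$ (the decomposition group at $2$ is all of $D_4$, and its inertia group, being a normal subgroup with cyclic quotient, must have index $2$, forcing residue degree $2$ and $e=4$) shows that both the prime $\mathfrak{P}_p$ above $p$ and the prime $\mathfrak{P}_2$ above $2$ ramify in $L/K_0$, so $\prod_v e_v=4$. Chevalley's formula then reads $\#\Cl(L)^{G}=h_{K_0}\cdot 2^{\,1-s}$, where $2^{s}=[E_{K_0}:E_{K_0}\cap N_{L/K_0}L^\times]$, and integrality forces $s\le 1$. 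Everything therefore comes down to proving $s=1$, i.e. that some unit of $K_0$ fails to be a norm from $L$.

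This last point is where I expect the main obstacle. By the Hasse norm theorem for the cyclic extension $L/K_0$, together with the product formula for the quadratic Hilbert symbol — and the fact that for a unit the symbol is trivial away from $\mathfrak{P}_2$ and $\mathfrak{P}_p$ — a unit is a global norm if and only if it is a local norm at $\mathfrak{P}_p$, which (the residue characteristic there being odd) happens exactly when its reduction is a square in $\F_{p^2}^{\times}$. Now $i$ reduces to an element of order $4$, which is a square since $4\mid (p^2-1)/2$, and the fundamental unit $\epsilon_p$ of $\Q(\sqrt p)$ reduces to $\pm1$ (because its norm is $+1$), again a square; so these two units alone are not enough. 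The crux is therefore to exhibit a genuinely new unit: one shows that the Hasse unit index $[E_{K_0}:\langle i\rangle\,E_{\Q(\sqrt p)}]$ equals $2$ and that the extra unit reduces to a \emph{non-square} modulo $\mathfrak{P}_p$. It is exactly at this step that the finer congruence $p\equiv 3\pmod 8$ (rather than merely $p\equiv 3\pmod 4$) is indispensable, and verifying it — equivalently, evaluating the $2$-adic Hilbert symbol at $\mathfrak{P}_2$ — is the technical heart of the proof. Granting $s=1$ we obtain $\#\Cl(L)^{G}=h_{K_0}$, which is odd, and the criterion of the first paragraph yields that $\Cl(L)$ has odd order.
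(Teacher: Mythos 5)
Your strategy is sound and genuinely different from the paper's: you push Chevalley's ambiguous class number formula up the tower $\Q(i)\subset K_0=\Q(i,\sqrt p)\subset L=\Q(i,\sqrt[4]p)$ and reduce everything to deciding which units of $K_0$ are norms from $L$ via the Hasse norm theorem and the product formula, whereas the paper argues with the Hilbert class field of $L$ over $K_0$ (a genus-theory argument in the style of Cox), reduces any putative unramified quadratic extension to $L(\sqrt\alpha)$ with $\alpha\in\{i,\kappa,i\kappa\}$ for $\kappa=\sqrt{i\varepsilon_p}$, and kills each candidate by an explicit $2$-adic ramification check (transported to $p=3$ or $p=11$ via $\Q_2(\sqrt[4]p)\simeq\Q_2(\sqrt[4]3)$, partly by computer search). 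Your first step, the oddness of $h_{K_0}$, is complete and arguably cleaner than the paper's version of that statement.

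There is, however, a genuine gap exactly at what you call the crux, and it is not a routine verification. As your own computation shows, both $i$ and $\varepsilon_p$ reduce to squares modulo $\mathfrak{P}_p$, hence are local norms there and therefore global norms from $L$. So if $E_{K_0}$ were generated (up to roots of unity) by $i$ and $\varepsilon_p$, you would get $s=0$, $\#\Cl(L)^{G}=2h_{K_0}$, and the method would prove nothing. Everything therefore rests on the assertion that the unit index $[E_{K_0}:\langle i\rangle E_{\Q(\sqrt p)}]$ equals $2$, i.e.\ that $i\varepsilon_p$ is a square in $K_0$ --- and this you introduce with ``one shows'' and then proceed ``granting $s=1$''. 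That assertion is precisely Proposition~\ref{prop:units} of the paper, whose proof is itself a nontrivial $2$-adic argument: it uses the oddness of $h_{K_0}$, the fact that $\varepsilon_p$ has norm $1$ so $\Q(\sqrt p,\sqrt{\varepsilon_p})/\Q(\sqrt p)$ ramifies at $2$, an identification of $\Q_2(\sqrt p, i)$ with $\Q_2(\sqrt3,i)$ carrying $\varepsilon_p$ to $\varepsilon_3$ up to squares, and the explicit identity $\bigl(\tfrac{1+i-\sqrt3-\sqrt3 i}{2}\bigr)^2=i(2-\sqrt3)$. (Once the existence of $\kappa$ is granted, the point you flag as the technical heart --- that $\kappa$ is a non-square modulo $\mathfrak{P}_p$ --- is in fact easy: $\kappa^2\equiv\pm i$ has order $4$ in $\F_{p^2}^\times$, so $\kappa$ has order $8$, and since $v_2(p^2-1)=3$ for $p\equiv3\pmod 8$ the elements of order $8$ are exactly the non-squares. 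The hard part is the existence of $\kappa$, not this reduction.) A smaller loose end: your value $\prod_v e_v=4$ requires that $\mathfrak{P}_2$ actually ramifies in $L/K_0$, and the $D_4$ argument you sketch presupposes that $2$ is non-split in $L$; this needs a line of justification, though if it failed the Chevalley formula would give oddness immediately.
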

Then the class group $\Cl_*(A_\Q,\cC(a))[2]$ is trivial, providing the
bound, for $p$ a prime congruent to $3$ modulo $8$,
\[
1  \le \dim_{\F_2}\Sel_2(\Jac(\cC(-p)) \le 3,
\]
where the lower bound $1$ comes from the existence of a $2$-torsion
point. Since the family does not have other $2$-torsion points under
our assumption ($a$ prime) and the Tate-Shafarevich group of
$\Jac(\cC(-p))$ has order a square (by a result of Poonen-Stoll), the
rank of $\Jac(\cC(-p))$ belongs to the set $\{0,1,2\}$. 

In Theorem~\ref{thm:classnumber} we prove that the root number of
$\Jac(\cC(-p))$ is $-1$ (assuming $p$ prime and $p \equiv 3 \pmod 8$)
so the parity conjecture implies that the rank of
$\Jac(\cC(-p))$ is always $1$.

\bigskip

The article is organized as follows: Section~\ref{section:1} contains
the basic definitions as well as some preliminary results used
throughout this work. Section~\ref{section:local} contains the main
local results needed to understand the $2$-Selmer group of
hyperelliptic curves defined over non-archimedean fields, including
the definition of the (\dag) hypothesis (it is also part of
hypotheses~\ref{hyp:hyp}).  Section~\ref{s: archimedean places}
contains the needed results for archimedean places.  Section~\ref{s:
  main theorems} is devoted to prove
Theorem~\ref{thm:main}. Section~\ref{section:applications} contains
the two main applications stated before, namely the study of quadratic
twists and the particular family
\[
\cC(a): y^2 = x^5+ax.
\]
At last, Section~\ref{section:examples} contains different examples
showing that both our upper and our lower bounds are attained.

\subsection*{Acknowledgments} We would like to thank Professor John
Cremona for some fruitful discussions regarding the splitting of the
surface considered in Section~\ref{section:family}. We also would like
to thank Davide Lombardo for explaining how to use his algorithm
(\cite{MR3882288}) used to deduce such a splitting. Special thanks go
to Alvaro Lozano-Robledo, who draw our attention that our previous
result could be generalized to the case of hyperelliptic curves.

\section{Preliminaries}
\label{section:1}
Let $K$ be a number field or a local field of characteristic 0,
and let $\cO$ be its ring
of integers. By $\id{p}\subset \cO$ we denote a maximal ideal (the
unique one when $K$ is local). Let $\cC$ be the hyperelliptic curve over
the field $K$ given by the equation
\[
 \cC: y^2= p(x),
\]
where $p(x)\in \cO[x]$ is a \emph{monic} polynomial of odd degree $d\geq3$
and (without loss of generality) non-zero discriminant
$\disc(p)$. Furthermore, if $K$ is a number field (or a local field of
residual characteristic $2$), we also assume that the coefficient of
$x^{d-1}$ is even, i.e. is divisible by all maximal primes of residual
characteristic $2$ (which always occurs after an integral translation).  The
hypothesis that $p(x)$ has non-zero discriminant implies that the
curve $\cC$ is a non-singular curve of genus
\begin{equation}
  \label{eq:genus}
g = \text{genus}(\cC)= \frac{d-1}{2}.  
\end{equation}
Let $J$ denote its Jacobian. Let us clarify a subtlety (for readers
who never studied this problem before) on what we mean by a rational
point on $J$. Let $\overline{K}$ denote an algebraic closure of $K$
and $\Gal_K:=\Gal(\overline{K}/K)$ its Galois group. There is a
natural action of $\Gal_K$ on the group of divisors
$\Div(\cC_{\!/\overline{K}})$, on $\Princ(\cC_{\!/\overline{K}})$ (the
principal divisors) and on $\Pic(\cC_{\!/\overline{K}})$ (the quotient
of the two previous ones). The group
$\Pic(\cC) :=
\Div(\cC_{\!/\overline{K}})^{\Gal_K}/\Princ(\cC_{\!/\overline{K}})^{\Gal_K}
\hookrightarrow \Pic(\cC_{\!/\overline{K}})^{\Gal_K}$.  Although the
curve $\cC$ is singular at the infinity point $(0:1:0)$, the
hypothesis on the polynomial $p(x)$ having odd degree implies that the
desingularization of $\cC$ at $(0:1:0)$ has a unique rational point
that we denote by $\infty$. In particular, we have a rational map
$\cC \to J$ defined over $K$ given by $P \to P - \infty$. Hence
$\Pic(\cC) = \Pic(\cC_{\!/\overline{K}})^{\Gal_K}$ (see for example
\cite[Proposition 3.1]{MR1465369}) so the two possible definitions of a rational point on $J$ coincide.

Decompose $p(x)$ into its irreducible factors
\[
  p(x)= p_1(x) \cdots p_r(x),
\]
where $p_i(x)\in \cO[x]$ are all distinct (due to our assumption
$\disc(p) \neq 0$). For $i\in \{1,... r\}$, let
$d_i= \mathrm{deg}(p_i(x))$. Then the $K$-algebra $A_K = K[x]/(p(x))$
is étale, i.e., it decomposes as a product of fields
\begin{equation}
  \label{eq:fielddecomp}
A_K \simeq K[x]/(p_1(x)) \times \cdots \times K[x]/(p_r(x)),  
\end{equation}
where each $K_i:=K[x]/(p_i(x))$ is a finite field extension of $K$. By
$T$ we will denote the class of the variable $x$ in $A_K$ and by
$(T_1,\ldots,T_r)$ its image under the
isomorphism~(\ref{eq:fielddecomp}). Let $A_{\cO}$ be the ring of
integers of $A_K$, which is isomorphic to the product
$\cO_1\times \cdots \times \cO_r$ where $\cO_i$ is the ring of
integers of $K_i$.

Let $\cN: A_K\rightarrow K$ denote the usual norm map (i.e. $\cN(x)$
equals the determinant of the $K$-linear map given by multiplication
by $x$), which gives a well defined map
$\cN: A_{K}^\times / (A_{K}^\times)^2 \rightarrow K^\times/
(K^\times)^2$. Let $\HK$
denote its kernel.

\begin{thm} \label{t: cohomology and the field}
  The group $\coho^1(\Gal_K,J[2])$ is isomorphic to $\HK$.
\label{thm:kummermap}
\end{thm}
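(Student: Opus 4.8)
The plan is to establish the isomorphism $\coho^1(\Gal_K, J[2]) \cong \HK$ by first giving an explicit Galois-equivariant description of the $2$-torsion module $J[2]$, and then computing its first cohomology. Since $\cC: y^2 = p(x)$ has odd degree $d = 2g+1$, the $2$-torsion of the Jacobian is generated by classes of the form $(\theta_i, 0) - \infty$, where $\theta_1, \dots, \theta_d$ are the roots of $p(x)$ in $\overline{K}$. More precisely, I would first recall the standard fact (going back to work on hyperelliptic $2$-descent, e.g. Schaefer, Poonen-Schaefer) that, writing $W = \{(\theta_i, 0)\}$ for the set of Weierstrass points, the group $J[2]$ is isomorphic as a $\Gal_K$-module to the kernel of the sum map $\F_2[W]^0 \to \F_2$ modulo the single relation $\sum_i (\theta_i,0) \sim 0$ coming from the hyperelliptic class. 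Because the Galois action on $W$ factors through its action on the roots of $p(x)$, and the roots of $p(x)$ are precisely indexed by the idempotent components of $A_{\overline K} = A_K \otimes_K \overline K$, this identifies $J[2]$ with the Galois module $(\mu_2(A_{\overline K}))_{N=1} / \mu_2(\overline K)$, i.e. with norm-one square-roots of unity in the étale algebra modulo the scalars.

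Second, I would compute cohomology. The key computational tool is the combination of Kummer theory with the decomposition~(\ref{eq:fielddecomp}). By Hilbert 90 applied to each field factor $K_i$, one has $\coho^1(\Gal_K, \mu_2(A_{\overline K})) \cong \prod_i K_i^\times/(K_i^\times)^2 = A_K^\times/(A_K^\times)^2$; this is the étale-algebra form of Kummer theory and uses that $A_{\overline K} = \prod \overline K$ with $\Gal_K$ permuting the factors transitively within each Galois orbit, so by Shapiro's lemma the cohomology of the induced module splits as the product over the field factors. Starting from the short exact sequence of $\Gal_K$-modules $0 \to \mu_2(\overline K) \to \mu_2(A_{\overline K}) \to \mu_2(A_{\overline K})/\mu_2(\overline K) \to 0$, together with the norm-one condition, I would extract the long exact cohomology sequence and identify $\coho^1(\Gal_K, J[2])$ with the kernel of the induced norm map on $A_K^\times/(A_K^\times)^2 \to K^\times/(K^\times)^2$, which is exactly $\HK$ by the definition of $\cN$ given just above the statement. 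The $\square$ decoration records the quotient by the scalar $\mu_2(\overline K)$, and one must check that it precisely accounts for the relation $\sum_i(\theta_i,0)\sim 0$ in $J[2]$.

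The main obstacle I expect is the careful bookkeeping of the two relations — the norm-one condition (cutting out $\HK$ from the full Kummer group $A_K^\times/(A_K^\times)^2$) and the scalar relation encoded by $\square$ — and verifying that together they match the two relations defining $J[2]$ (sum-zero on Weierstrass points, together with the hyperelliptic relation). Concretely, the delicate point is showing that the connecting maps in the long exact sequence are compatible with the norm map $\cN$ on the algebra, so that the cokernel/kernel identifications land on the correct subgroup; the odd degree of $p(x)$ is essential here, since it ensures $\infty$ is a single rational point and that $d$ is odd makes the scalar $\mu_2(\overline K) \hookrightarrow \mu_2(A_{\overline K})$ split off cleanly (an even degree would introduce a parity obstruction and a genuinely different answer). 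Since this is attributed to Cassels, I would cite the original (Cassels, \emph{The Mordell-Weil group of curves of genus 2}) and present the argument as a streamlined reconstruction in the étale-algebra language, rather than rederiving every cohomological vanishing from scratch.
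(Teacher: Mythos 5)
Your overall strategy is sound and is, in essence, the proof of the very result you are asked to reproduce: the paper itself gives no argument here, it simply cites \cite[Theorem 1.1]{MR1326746} (Schaefer), which generalizes Cassels \cite{MR199150}, and Schaefer's proof is exactly the \'etale-algebra computation you outline --- identify $J[2]$ inside $\mu_2(A_{\overline{K}})$, compute $\coho^1(\Gal_K,\mu_2(A_{\overline{K}}))\cong A_K^\times/(A_K^\times)^2$ by Shapiro's lemma plus Kummer theory, and cut out the norm-one part using the long exact sequence together with the surjectivity of the norm on $\coho^0$ (which is where the oddness of $d$ enters). So as a roadmap your proposal matches the cited source rather than anything written in this paper.

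There is, however, one concrete slip in your identification of the Galois module $J[2]$, and it propagates into your reading of the subscript $\square$. For odd degree $d=2g+1$ there is only \emph{one} relation, not two: $J[2]$ is isomorphic to the kernel of the sum map $\F_2[W]\to\F_2$, equivalently to the norm-one subgroup of $\mu_2(A_{\overline{K}})$, and this already has the correct dimension $d-1=2g$. You must not additionally quotient by the diagonal $\mu_2(\overline{K})$: the diagonal element $-1$ has norm $(-1)^d=-1\neq 1$ precisely because $d$ is odd, so $\mu_2(\overline{K})$ is not even contained in the norm-one subgroup, and imposing both conditions would produce a module of dimension $d-2$. (The picture with two relations --- sum-zero \emph{and} modulo the hyperelliptic class --- belongs to the even-degree case, where the answer is genuinely different.) Correspondingly, the decoration $\square$ in $\HK$ is defined in the paper as the kernel of $\cN$ on $A_K^\times/(A_K^\times)^2$ and nothing more; it does not record a quotient by scalars. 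Once you take $J[2]\cong\ker\bigl(\cN:\mu_2(A_{\overline{K}})\to\mu_2(\overline{K})\bigr)$, the rest of your computation goes through: from the exact sequence $1\to J[2]\to\mu_2(A_{\overline{K}})\to\mu_2(\overline{K})\to 1$, the connecting map vanishes because some field factor $K_i$ has odd degree (so the norm is already surjective on invariants), and the compatibility of corestriction with the norm identifies $\coho^1(\Gal_K,J[2])$ with $\HK$ directly; the ``two relations'' bookkeeping you flag as the main obstacle simply does not arise.
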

\begin{proof}
  See \cite[Theorem 1.1]{MR1326746}, which generalizes \cite[p. 240]{MR199150}.
\end{proof}

The exact sequence~(\ref{eq:longexactsequence}), with $m=2$,
then gives an injective morphism
\[
  \delta_K: J(K)/2J(K) \hookrightarrow \HK.
\]
One can give an explicit description of such a map for points on $J$
which are not of order $2$. Recall that $J(K)$ consists of degree zero
divisors of $\cC$ defined over $K$,
hence it is spanned by
divisors of the form
\begin{equation}
    \label{eq:divisor_orbit}
  D= \sum_{\sigma} (\sigma(P)-\infty),
\end{equation}
where $P \in \cC(\overline{K})$ 
and the sum is over the
different conjugates of $P$.
By \cite[Lemma 4.1]{MR1829626}, if $y(P) \neq 0$ then we have
\begin{equation}
  \label{eq:cobordism}
  \delta_K(D) 
  = \prod_\sigma (\sigma(x(P)) - T)
  ,
\end{equation}
where $x(P)$ denotes the $x$-coordinate of the point $P$.
When $K$ is a number field, for each place $v$ we have a similar injective morphism
\[
\delta_v : J(K_v)/2J(K_v) \hookrightarrow 
\HKv
\]
which can be explicitly described as done in (\ref{eq:cobordism}).
\begin{dfn}
The $2$-Selmer group of $J$ consists of the cohomology classes in $H^1(\Gal_K,J[2])$ whose restriction to $\Gal_{K_v}$ lies in the image of $\delta_v$ for all places $v$.
\label{defi:2-Selmer}
\end{dfn}
Under the isomorphism of Theorem~\ref{thm:kummermap}, the $2$-Selmer group of $J$ corresponds to 
$$\Sel_2(J)= \{ [\alpha] \in (A_{K}^\times /(A_{K}^\times)^2)_{\square} : \ \mathrm{loc}_v( [\alpha]) \in \mathrm{Im}(\delta_{K_v}) \ \text{for each place $v$ of $K$} \},$$
where
$\mathrm{loc}_v: (A_{K}^\times /(A_{K}^\times)^2)_{\square}
\rightarrow (A_{K_v}^\times /(A_{K_v}^\times)^2)_{\square}$ is the
natural map. This description of the $2$-Selmer group coincides with
the one given in \cite[Proposition 4.2]{MR1829626} for $K=\Q$.

\section{Some local non-Archimedean computations}
\label{section:local}
The main reference for the first part of this section is the
article \cite{MR1829626}.  Let $p\geq 2$ be a prime number and let $K$
be a finite extension of $\Q_p$, with ring of integers $\cO$. 
Let $v:\overline{K}^\times\to\Q$ be the valuation normalized so that
$v(K^\times)=\Z$.
Set $d_2 = [K:\Q_2]$ if $p= 2$, and $d_2=0$ otherwise,
so in any case $[\cO:2\cO]=2^{d_2}$. Recall the factorization
\[
p(x) = p_1(x) \cdots p_r(x),
\]
of $p(x)$ into irreducible polynomials.

  \begin{lemma} Under the previous hypothesis and notations
     \begin{enumerate}
    \item $\dim_{\F_2} J(K)/2J(K) = r-1+d_2\cdot g = \dim_{\F_2} J(K)[2] + d_2 \cdot g$.
      
    \item $\dim_{\F_2} \HK = 2 \dim_{\F_2} J(K)/2J(K)$.
      
    \item $\dim_{\F_2} \HO = \dim_{\F_2} J(K)/2J(K) + d_2 \cdot g$.
    \end{enumerate}
\label{lemma:orders}
  \end{lemma}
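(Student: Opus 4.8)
The plan is to treat the three parts separately, the first two resting on standard facts about abelian varieties over local fields and the third on an explicit computation with local units.

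For (1) I would split the equality into its two halves. The identity $\dim_{\F_2}J(K)[2]=r-1$ is purely a statement about the Galois action on $J[2]$. Writing $\theta_1,\dots,\theta_d$ for the roots of $p(x)$ and $W_i=(\theta_i,0)$ for the corresponding Weierstrass points, the classes $[W_i-\infty]$ generate $J[2]$ subject only to the single relation $\sum_i[W_i-\infty]=0$ (coming from the function $y$, whose divisor is $\sum_iW_i-d\cdot\infty$), so $J[2]\cong\F_2^{\,d}/\langle\mathbf 1\rangle$ with $\Gal_K$ permuting coordinates as it permutes the roots. The Galois orbits are exactly the $r$ irreducible factors, so the invariants of $\F_2^{\,d}$ are spanned by the $r$ orbit-sums; since $d$ is odd the all-ones vector $\mathbf 1$ has odd weight while every $\sigma v-v$ has even weight, so no new invariants are created in the quotient and $J(K)[2]=J[2]^{\Gal_K}$ has dimension $r-1$. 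For the first equality I would invoke Mattuck's theorem, $J(K)\cong\Z_p^{\,g[K:\Q_p]}\times(\text{finite})$: multiplication by $2$ is invertible on $\Z_p^{\,n}$ when $p$ is odd and has cokernel $\F_2^{\,n}$ when $p=2$, whence $\#J(K)/2J(K)=\#J(K)[2]\cdot 2^{gd_2}$ with $n=g[K:\Q_p]$, producing the $d_2\cdot g$ correction.

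For (2) I would use the local analogue of the exact sequence~(\ref{eq:longexactsequence}),
$$0 \to J(K)/2J(K) \to \coho^1(\Gal_K,J[2]) \to \coho^1(\Gal_K,J)[2] \to 0,$$
together with local Tate duality. As $\cC$ is a Jacobian, $J$ carries a canonical principal polarization and is therefore self-dual, so duality gives $\coho^1(\Gal_K,J)[2]\cong\bigl(J(K)/2J(K)\bigr)^\vee$; hence $\#\coho^1(\Gal_K,J[2])=\#\bigl(J(K)/2J(K)\bigr)^2$, which under Theorem~\ref{thm:kummermap} is exactly the claim. (Equivalently one may run Tate's local Euler characteristic formula for the self-dual module $J[2]$, using $\#\coho^0=\#\coho^2=2^{r-1}$ from part (1) and $|\#J[2]|_K=2^{-2gd_2}$.)

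For (3) I would compute both sides of the norm map explicitly. Decomposing $A_\cO^\times=\prod_i\cO_i^\times$ and using the structure $\cO_i^\times\cong\mu(K_i)\times\Z_p^{[K_i:\Q_p]}$ with $\mu(K_i)$ finite cyclic of even order, one finds $\dim_{\F_2}\cO_i^\times/(\cO_i^\times)^2$ equal to $1$ when $p$ is odd and to $1+[K_i:\Q_2]$ when $p=2$; summing over $i$ (and using $\sum_i[K_i:\Q_2]=d\,d_2$) gives $\dim_{\F_2}A_\cO^\times/(A_\cO^\times)^2=r$ or $r+d\,d_2$, while $\dim_{\F_2}\cO^\times/(\cO^\times)^2=1$ or $1+d_2$. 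Since a unit is a square in $K^\times$ iff it is one in $\cO^\times$, $\HO$ is the kernel of $\cN\colon A_\cO^\times/(A_\cO^\times)^2\to\cO^\times/(\cO^\times)^2$, so the formula will follow once $\cN$ is shown surjective, as the resulting differences $r-1$ (for $p$ odd) and $(r+d\,d_2)-(1+d_2)=r-1+2gd_2$ (for $p=2$) agree with $\dim_{\F_2}J(K)/2J(K)+d_2g$. The surjectivity of $\cN$ modulo squares is the main obstacle, and here the oddness of $d$ is essential: because $d=\sum_i\deg p_i$ is odd, some factor $K_1/K$ has odd degree $d_1$, and local class field theory gives $[K^\times:N_{K_1/K}K_1^\times]\mid d_1$, hence odd. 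Comparing $1\to\cO_1^\times\to K_1^\times\to\Z\to0$ with $1\to\cO^\times\to K^\times\to\Z\to0$, where $\cN$ induces multiplication by $f_{K_1/K}$ on valuation groups, the snake lemma (with $\ker(\times f_{K_1/K})=0$) embeds $\cO^\times/N_{K_1/K}\cO_1^\times$ into $K^\times/N_{K_1/K}K_1^\times$, so it too has odd order. A group that is at once a quotient of an odd-order group and of the $2$-group $\cO^\times/(\cO^\times)^2$ must vanish, giving $N_{K_1/K}\cO_1^\times\cdot(\cO^\times)^2=\cO^\times$ and hence surjectivity, which completes (3).
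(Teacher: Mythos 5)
Your proof is correct and follows essentially the same route as the paper: parts (1) and (2) are exactly the content of the reference (Stoll, Lemma 4.4) that the paper cites — the Weierstrass-point description of $J[2]$, Mattuck's theorem, and the Kummer sequence combined with local Tate duality — and part (3) is the same unit-index computation, where you additionally justify the surjectivity of the norm map that the paper merely asserts. (For that surjectivity there is also a one-line argument the authors use elsewhere: a unit $\epsilon \in \cO^\times$, viewed diagonally in $A_\cO^\times$, has norm $\epsilon^{d}$, which equals $\epsilon$ up to a square because $d$ is odd.)
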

  \begin{proof}
    The first two statements follow from \cite[Lemma
    4.4]{MR1829626}. The proof of the last statement follows the lines of the proof of 
    \cite[Lemma 1.5]{2001.02263}. The
    decomposition $A_{\cO} \simeq \cO_1 \times \cdots \times \cO_r$
    implies that
    \[
      [A_\cO^\times : (A_\cO^\times)^2] = \prod_{i=1}^r[\cO_i^\times:(\cO_i^\times)^2]=2^r\prod_{i=1}^r[\cO_i:2\cO_i] = 2^r[\cO:2\cO]^d.
      \]
      Since
      $\cN: A_\cO^\times /(A_\cO^\times)^2 \to
      \cO^\times/(\cO^\times)^2$ is surjective, its kernel $\HO$ has
      order
      $[A_\cO^\times:(A_\cO^\times)^2] \mathbin{/}
      [\cO^\times:(\cO^\times)^2] = 2^{r-1} \,[\cO:2\cO]^{d-1} =
      2^{r-1} 2^{d_2 \cdot 2g}$. The result then follows from the
      first statement.
  \end{proof}

  Recall that $J$ has a N\'eron model $\J$ over $\cO$, which provides
  a reduction map on $J(K)$. Following the standard notation, let
  $J^0(K)$ denote the set of points mapping into the identity
  component of $\J_{k}$ (where $k$ denotes the residual field of
  $K$). Then there is an exact sequence
  \[
    \xymatrix{
      1 \ar[r]& J^0(K) \ar[r] & J(K) \ar[r] & \J_k/\J_k^0 \ar[r]& 1.
     }
    \]
\subsection{The condition (\dag)}  
\begin{dfn} The polynomial $p(x)$ satisfies condition (\dag) if either one of
  the following two conditions holds:
  \begin{enumerate}
  \item[(\dag.i)] The ring $\cO[x]/(p(x))$ is isomorphic to the product $\prod_{i=1}^r \cO[x]/(p_i(x))$.
      
  \item[(\dag.ii)] The residual characteristic of $K$ is odd and
      the order of the component group
    $[J(K):J^0(K)]$  is odd.
  \end{enumerate}
\end{dfn}

\begin{rem}
  Since the polynomials $p_1(x),\ldots,p_r(x)$ are prime to each other, there exists an injective map
  \begin{equation}
    \label{eq:chinese}
    \pi: \cO[x]/(p(x)) \to \prod_{i=1}^r \cO[x]/(p_i(x)).
  \end{equation}
  The Chinese Remainder Theorem (CRT) states that if $\cO[x]$ were a
  principal ideal domain, then the map $\pi$ would be an isomorphism. Over the
  ring $\cO[x]$ this might not be true, for example if $\cO = \Z_2$
  and $p(x) = x(x+2)$, the image of the map $\pi$ consists of pairs of
  elements $(a,b)$ in $\Z_2 \times \Z_2$ such that
  $a \equiv b \pmod 2$.

  Let $\overline{\cO}$ denote the ring of integers of an algebraic
  closure of $K$ and let $\id{p}$ denote its maximal ideal. The
  hypothesis (\dag.i) (surjectivity of $\pi$) is equivalent to
  impose the condition that if $\alpha$ is a root of a polynomial
  $p_i(x)$ and $\beta$ is a root of other polynomial $p_j(x)$, then
  $\id{p} \nmid \alpha -\beta$. Indeed, if $p(x)$ and $q(x)$ are two
  monic polynomials in $\cO[x]$ without common roots, then
  \[
\cO[x]/(p(x)q(x)) \simeq \cO[x]/(p(x)) \times \cO[x]/(q(x))
\]
if and only if there exist polynomials $a, b \in \cO[x]$ such that
$1 = a p + b q$ (the usual CRT hypothesis). The proof that the
condition is sufficient mimics the proof of the CRT. To prove the
other implication, suppose that $\pi$ is an isomorphism. Then the
element $(1,0)$ lies in its image so there exists $f \in \cO[x]$ such
that $\pi(f) = (1,0)$. In particular $q \mid f$, so $f = b q$ for
some $b \in \cO[x]$ (here we use the fact that $q(x)$ is
monic). Similarly, there exists $a \in \cO[x]$ such that
$\pi(ap) = (0,1)$. But then $\pi(ap + bq) = \pi(1)$, so
\[
  1 = ap + bq + c pq.
\]
A standard argument using resultants proves that $1 = ap + bq$ if
and only if $\id{p} \nmid \alpha - \beta$ for any root $\alpha$ of
$p$ and any root $\beta$ of $q$.
\label{remark:chinese}
\end{rem}
Here are two easy instances where the condition (\dag.i) is satisfied:
\begin{itemize}
\item The polynomial $p(x)$ is irreducible.
  
\item $A_\cO$ (the ring of integers of $A_K$) equals $\cO[x]/(p(x))$.
\end{itemize}
These two cases correspond to the first two hypothesis of
\cite{2001.02263} (Definition 1.6) while studying the case of elliptic
curves. Our assumption (\dag.i) is less restrictive (improving the
results of loc. cit.).

\begin{thm} If the polynomial $p(x)$ satisfies \textup{(\dag)} then
  $\im(\delta_K)\subset (A_{\cO}^\times /
  (A_{\cO}^\times)^2)_{\square}$.
\label{thm:integrality}  
\end{thm}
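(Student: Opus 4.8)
The plan is to reduce to a valuation computation and to treat the two alternatives in (\dag) separately. Since $\delta_K$ is a homomorphism and the integral classes form a subgroup, it suffices to check the claim on the generators of $J(K)/2J(K)$, namely on divisor classes $D=\sum_\sigma(\sigma(P)-\infty)$ attached to a single point $P\in\cC(\overline K)$ with $y(P)\neq0$ (the $2$-torsion classes, where $y(P)=0$, require the variant of~\eqref{eq:cobordism} and are handled separately). Because $A_\cO\simeq\cO_1\times\cdots\times\cO_r$ is a product of discrete valuation rings, a class in $\HK$ lies in the image of $\HO$ exactly when each of its components has even valuation; so I must show that the $i$-th component $\prod_\sigma(\sigma(x(P))-T_i)$ of $\delta_K(D)$ has even valuation in $K_i$ for every $i$. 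Throughout I use that over a local field the valuation $v$ extends uniquely to $\overline K$ and is therefore $\Gal_K$-invariant.

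In case (\dag.ii) the argument is structural. If the residual characteristic is odd and $[J(K):J^0(K)]$ is odd, the component group $\J_k/\J_k^0$ has trivial $2$-torsion, so from $1\to J^0(K)\to J(K)\to \J_k/\J_k^0\to 1$ the map $J^0(K)/2J^0(K)\to J(K)/2J(K)$ is surjective and it is enough to control $\delta_K$ on $J^0(K)$. As $p$ is odd, the kernel of reduction is a pro-$p$ group, hence uniquely $2$-divisible, giving $J^0(K)/2J^0(K)\cong\J_k^0(k)/2\J_k^0(k)$; the classes coming from the identity component of the special fibre are unramified, which under Theorem~\ref{thm:kummermap} is precisely the condition of lying in $\HO$.

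In case (\dag.i) the computation is genuinely arithmetic. The crucial input is that $p(\sigma(x(P)))=y(\sigma(P))^2$ is a square, so $v(p(\sigma(x(P))))$ is even for every conjugate $\sigma$. By Remark~\ref{remark:chinese}, condition (\dag.i) means the roots of distinct factors $p_j$ stay apart modulo the maximal ideal; hence for a fixed $\sigma$ the element $\sigma(x(P))$ can be congruent to the roots of at most one factor, so $v(p_j(\sigma(x(P))))=0$ for all but one index $j$ and the surviving value is itself even. First I would establish this ``one factor at a time'' evenness, and then combine it with the $\Gal_K$-invariance of $v$ — which forces $\sum_\sigma v(\sigma(x(P))-\theta)$ to be independent of the chosen root $\theta$ of a given $p_j$ — to pass from the evenness of the numbers $v(p_j(\sigma(x(P))))$ to the evenness of the valuation of the component $\prod_\sigma(\sigma(x(P))-T_j)$. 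Points reducing to $\infty$, where $v(x(P))<0$, are separated off and handled using that $d$ is odd, so that $2v(y(P))=d\,v(x(P))$ forces $v(x(P))$ even.

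I expect the main obstacle to be this last transfer in case (\dag.i): reconciling the parity of the individual contributions $v(\sigma(x(P))-T_j)$ across the Galois orbit with the ramification and residue degrees of $K_j/K$, so that the evenness provided globally by the square $y(P)^2$ descends to each separate component. This is most delicate at the primes above $2$, where the formal-group shortcut of case (\dag.ii) is unavailable and the argument must track congruences of roots modulo the maximal ideal rather than relying on $2$-divisibility; it is exactly here that hypothesis (\dag.i), and the separation of roots it guarantees, does the essential work.
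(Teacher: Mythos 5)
Your handling of case (\dag.ii) follows the same line as the paper (reduce to the component group and use Stoll's result that the valuation of $\im(\delta_K)$ is governed by the $2$-part of $[J(K):J^0(K)]$), and the reduction to orbit divisors is harmless. The problem is case (\dag.i), where your route is genuinely different from the paper's and where the step you yourself flag as ``the main obstacle'' is not merely unfinished but false. The evenness of $v(p_j(\sigma(a)))$ cannot be transferred to the evenness of the $K_j$-valuation of $\prod_\sigma(\sigma(a)-T_j)$: one has $v(p_j(\sigma(a)))=\sum_{\theta}v(\sigma(a)-\theta)$, a sum over all $d_j=e_jf_j$ roots of $p_j$, and when the residue degree $f_j$ is even this sum is even for trivial reasons and carries no parity information about the one component you need. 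Concretely, over $K=\Q_3$ let $u=s+ti$ be a unit of norm $-1$ in the unramified quadratic extension $\Q_3(i)$, let $q(x)=x^2-6sx-9$ be the minimal polynomial of $3u$, and set $p(x)=(x-1)q(x)$. The cross-factor roots differ by units, so (\dag.i) holds in the sense of Remark~\ref{remark:chinese}; yet $P=(0,3)$ lies on $y^2=p(x)$ and $\delta_K(P-\infty)=(-1,-3u)$ has second component of odd valuation in $\Q_3(i)$, even though $v(q(0))=2$ is even. So the valuation bookkeeping you propose cannot be closed by any reconciliation of ramification and residue degrees.

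For comparison, the paper's proof of case (\dag.i) avoids valuations entirely: following Stoll and Bhargava--Gross it forms the $\cO[T]$-ideal $I_D=(q(T),R(T))$ from the interpolation polynomial $R$ through the affine support of a reduced representative of $D$, claims $I_D^2=(q(T))$, deduces from (\dag.i) that $I_D$ is a proper ideal of the Gorenstein ring $\cO[T]$ and hence principal, so that $q(T)$ is a unit times a square; an induction via the divisor of $y-R(x)$ handles non-integral $R$. You should be aware that the example above stresses that argument as well: there $I_D=(T,3)$ has $I_D^2\subsetneq(q(T))$ (the norm comparison implicitly uses $\norm(I_D^2)=\norm(I_D)^2$, which presupposes the invertibility being proved), and the curve's component group has order $2$, so by the very lemma of Stoll used in case (\dag.ii) the image of $\delta_K$ is genuinely non-integral. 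In other words, the integrality really fails for this $p(x)$ satisfying (\dag.i), and what makes the conclusion true is a stronger hypothesis such as each $\cO[x]/(p_i(x))$ being the maximal order $\cO_i$, as in the authors' earlier elliptic-curve work. In any case your argument would have to be replaced by the ideal-theoretic one (suitably repaired); the parity transfer it rests on is simply not available.
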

Before giving the proof, let us state a
particular (and easy to prove) instance of the result.

\begin{lemma}
Let $P=(a,b)\in\cC(\overline{K})$ and suppose that $v(a)<0$.    
Consider the divisor $D=\sum_{\sigma} (\sigma(p)-\infty)\in J(K)$
where the sum is over the different conjugates of $P$.
  Then
  $\delta_K(D)\in
  \HO$. Moreover, if $p> 2$ then $\delta_K(D)= 1$.
\label{lemma:negvaluation}
\end{lemma}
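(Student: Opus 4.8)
The plan is to prove directly that the coordinates of the image element $\delta_K(D)$ are integral units, handling the norm-component structure separately. Recall that by~\eqref{eq:cobordism}, $\delta_K(D) = \prod_\sigma(\sigma(x(P)) - T)$, viewed in $A_K^\times/(A_K^\times)^2$. Writing $a = x(P)$ and denoting by $\alpha_1,\dots,\alpha_d$ the roots of $p(x)$ in $\overline{K}$ (so that $T$ corresponds to $(\alpha_1,\dots,\alpha_d)$ componentwise in $A_K \otimes \overline{K}$), the $i$-th component of $\delta_K(D)$ is essentially $\prod_\sigma(\sigma(a) - \alpha_i)$. The key observation is that $b^2 = p(a) = \prod_i(a-\alpha_i)$, so the product of all these coordinates recovers a perfect square up to the structure of the orbit sum, which is exactly what keeps the class inside the ``$\square$'' (norm-trivial) subspace.

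First I would set up the valuation bookkeeping. Since $v(a)<0$ and each root $\alpha_i$ satisfies $v(\alpha_i)\geq 0$ (the $\alpha_i$ are algebraic integers as $p(x)\in\cO[x]$ is monic), we have $v(a - \alpha_i) = v(a) < 0$ for every $i$, and more precisely $a-\alpha_i$ and $a$ have the same valuation. Next I would compute the valuation of each coordinate of $\delta_K(D)$: it is $\deg(D)\cdot v(a)$ up to the orbit count, and I would use the relation $b^2 = p(a)$ together with the fact that the degree of the orbit of $P$ is even or the coordinate is a square, to show each coordinate has \emph{even} valuation. This is the heart of the ``integrality'' claim: an element of $A_{K_v}^\times$ with even valuation in each component is, modulo squares, represented by a unit, hence lies in $(A_\cO^\times/(A_\cO^\times)^2)_\square$.

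For the second, sharper assertion (when $p>2$, the class is actually trivial), I would argue that when the residue characteristic is odd, a unit in $\cO_i^\times$ whose reduction is a square in the residue field is itself a square, by Hensel's lemma. So I would show that after dividing out the even-valuation part, the resulting unit reduces to a square residue; since $v(a)<0$ forces $a^{-1}$ to be a uniformizer-power times a unit, the element $\prod_\sigma(\sigma(a)-T)$ becomes, up to squares, $\prod_\sigma \sigma(a)^{\deg}\cdot(\text{principal unit})$, and in odd residue characteristic principal units of the relevant form are squares. The cleanest route is to factor out the leading behaviour: write $\sigma(a)-\alpha_i = \sigma(a)\cdot(1 - \alpha_i/\sigma(a))$ and note $1-\alpha_i/\sigma(a)$ is a principal unit (valuation of $\alpha_i/\sigma(a)$ is strictly positive), hence a square when $p>2$ by Hensel; the surviving factor $\prod_\sigma\sigma(a)^{d}$ is a norm-type expression whose class I would check is trivial using that $\sigma(a)$ lies in $K_v$ after taking the full orbit product.

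The main obstacle I anticipate is the careful treatment of the orbit structure and the norm-compatibility condition defining the subspace $(\cdot)_\square$: one must verify not merely that each coordinate is integral and of even valuation, but that the global norm $\cN(\delta_K(D))$ is trivial in $K^\times/(K^\times)^2$, which is automatic since $\delta_K$ lands in $\HK$ by construction, yet must be tracked through the integral refinement. A secondary subtlety is that $P$ may be defined over an extension of $K$, so the conjugates $\sigma(P)$ and the action on the components $\alpha_i$ must be matched correctly; I would resolve this by working componentwise over each field factor $K_i$ and invoking that $D$ is $\Gal_K$-stable, so the product over the orbit descends to a well-defined element of each $\cO_i^\times/(\cO_i^\times)^2$. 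Once the valuations are shown even and the residues square (for $p>2$), both conclusions follow.
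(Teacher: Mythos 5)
Your plan has the right skeleton --- show each component of $\delta_K(D)$ has even valuation, hence is a unit modulo squares, and for $p>2$ kill the remaining unit by a Hensel-type argument --- and the second half is a genuinely different route from the paper's, which instead notes that $v(b)<v(a)<0$ forces $D$ into the kernel of reduction $J^0(K)$, a pro-$p$ group, so its image under $\delta_K$ (an element of order at most $2$) must vanish for $p\neq 2$. But both halves of your write-up have a gap precisely at their central step. For the integrality claim, the evenness of the valuations is the whole point, and your justification --- ``the degree of the orbit of $P$ is even or the coordinate is a square'' --- does not hold: the orbit can have odd length (e.g.\ $P$ rational, orbit of size one), and there is no a priori reason for the coordinate to be a square. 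What actually makes it work, and what the paper uses, is $b^2=p(a)$ combined with the \emph{oddness of $d$}: since $v(a)<0$ and $p(x)$ is monic integral, $v(p(a))=d\,v(a)$, so $2v(b)=d\,v(a)$; with $e$ the ramification index of $L=K(a,b)$ one has $v(b)\in\frac1e\Z$, hence $e\,v(a)\in2\Z$ because $d$ is odd, and since $e$ divides the orbit length the product $\prod_\sigma(\sigma(a)-T_i)$ has even valuation. You never invoke $d$ odd, and without it the statement is simply false.

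For the $p>2$ claim, your factorization $\sigma(a)-T_i=\sigma(a)\bigl(1-T_i/\sigma(a)\bigr)$ correctly reduces everything to the diagonal class of $c=\prod_\sigma\sigma(a)\in K^\times$, but ``$c$ lies in $K$ after taking the full orbit product'' does not make its diagonal image trivial in $\HK$: a non-square unit of $K$ can remain a non-square in every factor $K_i$ (for instance when the $K_i/K$ are unramified of odd degree), so the diagonal embedding of $K^\times/(K^\times)^2$ is far from trivial. The missing step is that $c$ is in fact a square in $K^\times$: writing $p(a)=a^d w$ with $w$ a principal unit, one gets $c^d=\bigl(\prod_\sigma\sigma(b)\bigr)^2\cdot\bigl(\prod_\sigma\sigma(w)\bigr)^{-1}$, which is a square in $K^\times$ for $p>2$, and $d$ odd then forces $c\in(K^\times)^2$. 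With that supplied (note the oddness of $d$ is again essential), your computation closes and gives an honest elementary alternative to the paper's formal-group argument; as written, however, both conclusions rest on unproven assertions.
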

\begin{proof} Suppose first that $P= (a, b)\in \cC(K)$ and $v(a)< 0$.
  Equation \eqref{eq:hyperel}, with the assumption that $p(x)$ has integral
  coefficients, implies that $b\neq0$ and
  $2\,v(b)= d\,v(a)$. Since $b\in K^\times$ we have $v(b)\in\Z$ and
  so in particular $v(a)$
  is even. Since $T_i\in \cO_i$ for all $i= 1,\ldots, r$,
  it follows that $v(a-T_i)=v(a)$ is even as well,
  hence, up to a square in $K_i^\times$,
  it can be taken to be a unit.
  Thus $\delta_K(P - \infty)=(a-T)\in \HO$ as
  claimed.
  \par
  In general, let $L=K(a,b)$ with ramification index $e$.
  The same argument as above, now using $v(b)\in\frac1e\Z$,
  shows that $e\, v(\sigma(a)-T_i)$ is even.
  Since $e\mid[L:K]$ it follows that
  $\prod_{\sigma} (\sigma(a)-T_i)\in K_i^\times$
  has even valuation and the argument goes through to
  show
  \[
\delta_K(D)
= \prod_{\sigma}
(\sigma(a)-T) \in \HO .
\]
\par
To prove the second claim, note that since $v(b)<v(a)<0$ the divisor $D$
lies in $J^0(K)$, the
kernel of the reduction map. But $J^0(K)$ has a formal group
structure, hence it is a pro-$p$-group and so $\delta_K(D)=1$ if
$p \neq 2$ (as it is an element of order at most $2$).
\end{proof}

\begin{proof}[Proof of Theorem~\ref{thm:integrality}] Start supposing
that (\dag.i) holds.
The proof is similar to that due to Michael Stoll given in
\cite[Proposition 8.5]{MR3156850}. Let
us just recall the main ingredients: let
$D = \sum_{i=1}^m P_i - m\cdot \infty$ be a degree zero divisor, which
satisfies the following hypothesis (which can always be assumed):
  \begin{itemize}
  \item The value $x(P_i)$ is not a root of $p(x)$ (see \cite[Lemma 2.2]{MR1326746}).
    
  \item The degree of $D^+$ (equal to $m$) is at most $\frac{d-1}{2}$
    (by Riemann-Roch's theorem).
    
  \item The values $\{x(P_i)\}$ are all distinct.
    
  \item Each point $P_i$ has integral coordinates (otherwise the result follows from Lemma~\ref{lemma:negvaluation}).
  \end{itemize}
To ease the notation, let $P_i = (a_i,b_i)$. Then
\[
  \delta_K(D) 
  = \prod_{i=1}^m (a_i- T)= (-1)^m q(T),
\]
where $q(x) = (x-a_1) \cdots (x-a_m) \in \cO[x]$.  There exists a
unique $R(x) \in K[x]$ of degree $\leq m-1$ with $R(a_i) =
b_i$. Observe that $R(x)^2-p(x)$ vanishes at $\{a_1,\ldots, a_m\}$ so
 it is divisible by $q(x)$. Consider the following two cases:

 \medskip
 
 \noindent {\it Case 1:} $R(x)$ has integral coefficients. Let
 $I_D \subset A_\cO$ be the $\cO[T]$-ideal generated by
 $(q(T),R(T))$.

 \medskip
 \noindent  {\bf Claim:} $I_D^2 = (q(T))$
 as $\cO[T]$-ideals.
 \smallskip

 \noindent Indeed, since $p(T)= 0$, $q(T)\mid R(T)^2$.
 From this observation
 it is clear that $I_D^2 \subset (q(T)) $. Then the proof of the claim
 follows from the fact that both ideals have the same norm (as proved
 in loc. cit.).
 \smallskip
  
 \noindent Clearly $\cO[T]\subseteq
 \End(I_D)\subseteq\End(I_D^2)\subseteq\prod \cO[T_i]$, where the last
 inequality follows from the claim.  Then (\dag.i) implies 
 they are all equalities, so $\End(I_D)=\cO[T]$ (i.e. $I_D$
 is a proper ideal). As explained in \cite{MR3156850}, the ring
 $\cO[T]$ is generated by a single element over $\cO$, hence it is
 Gorenstein of dimension one. In particular, an $\cO[T]$-ideal is
 principal if and only if it is proper, so $I_D$ is indeed a principal
 $\cO[T]$-ideal. It follows that $q(T)$ is a square up to a unit, thus
 $\delta_K(D)\in (A_{\cO}^\times / (A_{\cO}^\times)^2)_{\square}$.
 Note that when $m= 1$, $R(x)$ does have integral
 coefficients, so we have proven the statement (in both cases) for $m\leq 1$.

 \medskip
 
\noindent {\it Case 2:} $R(x)$ is not integral. Then $p(x) - R(x)^2$ is not integral,
  but it has at most $2m-2$ integral roots. Since $a_1, \ldots, a_m$
  are integral roots, there are other integral roots
  $\alpha_1, \ldots, \alpha_t$ (with $t \le m-2$). Let
  $\beta_i = R(\alpha_i)$. Then the divisor of $y - R(x)$ on $\cC$
  equals
  \[
      D + D' + D''
  \]
  where $D'=\sum_{i= 1}^t[(\alpha_i,\beta_i) - \infty]$
  and $D''$ is a sum of non-integral points.
From Lemma~\ref{lemma:negvaluation} we know that
$\delta_K(D'')\in\HO$, hence $\delta_K(D)\in\HO$ is equivalent to
$\delta_K(D')\in\HO$.
Since the positive part of $D'$ has degree at most $m-2$ the claim
follows by an inductive argument on $m$.

\vspace{2pt}

Suppose at last that (\dag.ii) holds. By \cite[Lemma
4.5]{MR1829626}, the valuation of the image of $J(K)$ under $\delta$
is isomorphic to the $2$-group of connected components, which is
trivial by hypothesis.
\end{proof}



\begin{coro}
Suppose that $p(x)$ satisfies \textup{(\dag)}.
Then $\im(\delta_K) \subset \HO$ with index $2^{d_2 \cdot g}$.
\label{coro:index}
\end{coro}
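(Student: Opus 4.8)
The plan is to deduce both halves of the statement from results already established, so that the corollary reduces to a dimension count over $\F_2$.

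First I would obtain the inclusion directly. Theorem~\ref{thm:integrality} asserts that whenever $p(x)$ satisfies (\dag) we have $\im(\delta_K) \subset \HO$; this is precisely the first assertion, so no further argument is needed for it. It remains only to compute the index $[\HO : \im(\delta_K)]$ as a quotient of $\F_2$-vector spaces.

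For the index, I would exploit the injectivity of $\delta_K \colon J(K)/2J(K) \hookrightarrow \HK$ recorded just after Theorem~\ref{thm:kummermap}. Because $\delta_K$ is injective, its image has $\F_2$-dimension equal to that of its source, namely $\dim_{\F_2}\im(\delta_K) = \dim_{\F_2} J(K)/2J(K)$. On the other hand, Lemma~\ref{lemma:orders}(3) gives $\dim_{\F_2}\HO = \dim_{\F_2} J(K)/2J(K) + d_2\cdot g$. Since by Theorem~\ref{thm:integrality} the image sits inside $\HO$, subtracting the two dimensions shows that $\im(\delta_K)$ has codimension exactly $d_2\cdot g$ in $\HO$, whence
\[
  [\HO : \im(\delta_K)] = 2^{\dim_{\F_2}\HO - \dim_{\F_2}\im(\delta_K)} = 2^{d_2\cdot g}.
\]

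I do not expect any genuine obstacle within the corollary itself: all the difficulty has been front-loaded into the two inputs. The substantive content is Theorem~\ref{thm:integrality}, whose proof requires the ideal-theoretic argument (the claim $I_D^2 = (q(T))$ together with the Gorenstein property of $\cO[T]$ forcing proper ideals to be principal), and Lemma~\ref{lemma:orders}(3), which supplies the order of $\HO$. Granting these, the corollary is pure bookkeeping, and the only point to state carefully is that the index in question is that of $\F_2$-vector spaces, so that comparison of dimensions immediately yields the exponent $d_2\cdot g$.
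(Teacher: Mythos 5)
Your proposal is correct and follows essentially the same route as the paper: the inclusion is exactly Theorem~\ref{thm:integrality}, and the index is obtained by comparing $\dim_{\F_2}\im(\delta_K)=\dim_{\F_2}J(K)/2J(K)$ (via injectivity of $\delta_K$) with $\dim_{\F_2}\HO$ from Lemma~\ref{lemma:orders}(3). The paper's own proof is just a terser version of this same dimension count, so there is nothing to add.
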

\begin{proof}
  By Lemma~\ref{lemma:orders}, if $p\neq 2$ then both sets have the
  same cardinality, while when $p=2$, the index equals
  $2^{d_2 \cdot g}$ as claimed.
\end{proof}

\subsection{The case \texorpdfstring{$p=2$}{p=2}}
\label{subsection:p=2}
Suppose for the rest of the section that $K$ is a finite extension of
$\Q_2$. The problem at even characteristic is that the image of the
map $\delta_K$ is not the whole group $\HO$ (as stated in
Corollary~\ref{coro:index}), so we need to give a ``lower bound'' for
the group $\im(\delta_K)$. Ideally, the lower bound would be
related to unramified extensions of $A_K$ (justifying the class group
formula in our main theorem), but this is not always the case.

As before, let $(T_1,\ldots,T_r)$ denote the image of $T$ under the
isomorphism $A_K \simeq K_1 \times \cdots \times K_r$. Let $k_i$
denote the residue field of $K_i$ for each $i= 1,\ldots, r$, and let
$k$ denote the residue field of $K$. Let $\overline{T_i}$ denote the
image of $T_i$ under the reduction map $\cO_i \to k_i$. Let $e_i$
denote the ramification degree of the extension $K_i/ K$.  Note that
at least one of the $e_i$ must be odd, since $d=\sum_{i=1}^r e_i[k_i:k]$ is odd.

Recall our assumption that the coefficient of $x^{d-1}$ in $p(x)$ is
``even'' (i.e. divisible by any local uniformizer at places dividing
$2$).  


%
%
\begin{lemma} Keeping the previous notation,
  \[
      \sum_{i=1}^r e_i \mathrm{Tr}_{k_i/ k}(\overline{T}_i)= 0.
    \]
\label{lemma:condition}
\end{lemma}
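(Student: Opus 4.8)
The plan is to read the coefficient of $x^{d-1}$ off the factorization $p = p_1\cdots p_r$ and translate the hypothesis into a congruence between traces. Writing $c_i$ for the coefficient of $x^{d_i-1}$ in the monic factor $p_i$, expanding the product shows that the coefficient of $x^{d-1}$ in $p$ equals $\sum_{i=1}^r c_i$, since the only way to produce degree $d-1$ is to take the leading term from every factor but one and the subleading term from the remaining factor. By Vieta's formulas $c_i = -\Tr_{K_i/K}(T_i)$, because the $K$-conjugates of $T_i$ are exactly the roots of $p_i$ and their sum is $\Tr_{K_i/K}(T_i)$. Hence our standing assumption that $\fp$ divides the coefficient of $x^{d-1}$ becomes $\sum_{i=1}^r \Tr_{K_i/K}(T_i) \equiv 0 \pmod{\fp}$, and it suffices to establish the reduction formula
\[
  \overline{\Tr_{K_i/K}(T_i)} = e_i\,\Tr_{k_i/k}(\overline{T}_i)\quad\text{in }k
\]
for each $i$, and then sum and reduce modulo $\fp$.

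The core step is this reduction formula, which I would prove for an arbitrary $\alpha\in\cO_i$ in place of $T_i$. Since $\cO_i$ is free of rank $d_i = e_i f_i$ over $\cO$, multiplication by $\alpha$ is given by a matrix with entries in $\cO$ whose trace is $\Tr_{K_i/K}(\alpha)$; reducing its entries modulo $\fp$ produces the matrix of multiplication by $\alpha$ on the finite-dimensional $k$-algebra $\cO_i/\fp\cO_i$, and the trace of an integral matrix reduces to the trace of its reduction. Thus $\overline{\Tr_{K_i/K}(\alpha)}$ is the trace of multiplication by $\alpha$ on $\cO_i/\fp\cO_i$ regarded as a $k$-vector space. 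Now by the very definition of the ramification index $\fp\cO_i = \fp_i^{\,e_i}$, so $\cO_i/\fp\cO_i = \cO_i/\fp_i^{\,e_i}$ carries the filtration by the powers $\fp_i^{\,j}/\fp_i^{\,e_i}$ for $0\le j\le e_i$, with $e_i$ successive quotients, each isomorphic to $k_i$ as a $k$-vector space. Multiplication by $\alpha$ preserves this filtration and acts on each successive quotient as multiplication by the image $\overline{\alpha}\in k_i$; summing the contributions gives that the trace on $\cO_i/\fp\cO_i$ is $e_i\,\Tr_{k_i/k}(\overline{\alpha})$. Taking $\alpha = T_i$ yields the formula.

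Summing over $i$ and invoking the congruence of the first paragraph then gives $\sum_{i=1}^r e_i\,\Tr_{k_i/k}(\overline{T}_i) = 0$ in $k$, as asserted. The one genuinely substantive point is the trace reduction formula in the presence of ramification; the remaining manipulations are Vieta's formulas together with the fact that an integral matrix and its reduction have matching traces. I would take a little care to note that the filtration argument is uniform in the ramified, unramified, and mixed cases, since its only inputs are the equality $\fp\cO_i = \fp_i^{\,e_i}$ and the identification of each graded piece with $k_i$; in particular the unramified case $e_i=1$ recovers the familiar statement that the trace commutes with reduction.
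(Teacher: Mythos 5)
Your argument is correct and follows essentially the same route as the paper: identify the coefficient of $x^{d-1}$ with $\pm\sum_i \Tr_{K_i/K}(T_i)$ via Vieta, then reduce modulo $\fp$ using the congruence $\Tr_{K_i/K}(T_i) \equiv e_i \Tr_{k_i/k}(\overline{T}_i) \pmod{\fp}$. The only difference is that the paper cites this last congruence as well known, whereas you supply a (correct) proof via the filtration of $\cO_i/\fp\cO_i$ by powers of $\fp_i$.
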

\begin{proof} The coefficient of $x^{d-1}$ in $p(x)$ equals the trace
  $\trace_{A_K/K}(T)$ (an element of $\cO_K$) which under the
  isomorphism $A_K \simeq K_1 \times \cdots \times K_r$ equals
  $\sum_{i=1}^r \mathrm{Tr}_{K_i/ K}(T_i)$. The assumption on the
  coefficient of $x^{d-1}$ implies that $\trace_{A_K/K}(T)$ is congruent to
  zero modulo the maximal ideal $\id{p}$ of $\cO_K$. Thus, the result
  follows from the well known equality
\[
\trace_{K_i/ K}(T_i) \equiv e_i\trace_{k_i/k}(\overline{T}_i) \pmod{\id{p}}.
\]
\end{proof}
Let $\Vlocal$ denote the $\F_2$-vector space
\begin{equation}
  \label{eq:defV}
  \Vlocal = \langle \trace_{k_i/k}(\overline{T}_i) \ : \ i= 1, ..., r\rangle \subset k.
\end{equation}
Lemma~\ref{lemma:condition} implies that $\dim_{\F_2} \Vlocal \le r-1$.

\begin{dfn} We say that $p(x)$ satisfies $(\ast)$ if $\dim_{\F_2}
    \Vlocal = r-1$.
\end{dfn}

\begin{thm}
    If $p(x)$ satisfies $(\ast)$ then it satisfies \textup{(\dag.i)}.
\end{thm}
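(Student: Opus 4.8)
The plan is to show that condition $(\ast)$ — the statement that the traces $\trace_{k_i/k}(\overline{T}_i)$ span an $(r-1)$-dimensional $\F_2$-subspace of $k$ — forces the roots of distinct irreducible factors $p_i$, $p_j$ to be distinct modulo the maximal ideal $\id{p}$ of $\overline{\cO}$. By Remark~\ref{remark:chinese}, this separation of residues is exactly equivalent to (\dag.i). So the whole task reduces to a statement about reductions of roots.

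First I would set up the reduction picture. For each factor $p_i(x)$, the reduction $\overline{p_i}(x) \in k[x]$ has roots that are the reductions $\overline{\alpha}$ of the roots $\alpha$ of $p_i$ lying in $\overline{k}$; concretely, the roots of $\overline{p_i}$ are the Galois conjugates (over $k$) of $\overline{T}_i \in k_i$, each possibly with multiplicity coming from the ramification $e_i$. Thus $\overline{p_i}(x)$ is, up to the power coming from ramification, the minimal polynomial of $\overline{T}_i$ over $k$, and its roots are exactly the $k$-conjugates of $\overline{T}_i$. The failure of (\dag.i) means, by the remark, that there exist $i \neq j$ and roots $\alpha$ of $p_i$, $\beta$ of $p_j$ with $\overline{\alpha} = \overline{\beta}$ in $\overline{k}$; equivalently the reduced polynomials $\overline{p_i}$ and $\overline{p_j}$ share a common root, i.e.\ $\overline{T}_i$ and $\overline{T}_j$ have a common $k$-conjugate. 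I would argue that this common root forces a nontrivial linear relation among the trace elements that collapses the dimension below $r-1$.

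The key step is the counting/contrapositive argument. Suppose (\dag.i) fails. Then some pair of reduced minimal polynomials shares a root, which means the $r$ residues $\overline{T}_1, \ldots, \overline{T}_r$, viewed through their $k$-conjugacy classes in $\overline{k}$, do not give $r$ distinct irreducible $k$-polynomials — two of the classes overlap, so the product $\prod_i \overline{m_i}(x)$ (where $\overline{m_i}$ is the minimal polynomial of $\overline{T}_i$ over $k$) has a repeated factor. I would then translate this into trace data: the map $\overline{T}_i \mapsto \trace_{k_i/k}(\overline{T}_i)$ is the first symmetric function of the conjugates, and the relation $\sum_{i=1}^r e_i \trace_{k_i/k}(\overline{T}_i) = 0$ from Lemma~\ref{lemma:condition} is one linear constraint. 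Condition $(\ast)$ says this is the \emph{only} relation, so the $\overline{T}_i$ are ``as independent as possible.'' The coincidence of two residues produces a \emph{second} independent linear relation among the $\trace_{k_i/k}(\overline{T}_i)$ (essentially because two of the generators become equal or conjugate, reducing the span), contradicting $\dim_{\F_2}\Vlocal = r-1$.

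The hard part will be making precise how a single shared residue yields a genuinely new linear relation among the trace elements, rather than merely a repeated spanning vector; in characteristic $2$ one must be careful because equal generators among the $\trace_{k_i/k}(\overline{T}_i)$ already cut the dimension (since $v + v = 0$), so I would want to show that whenever $\overline{T}_i$ and $\overline{T}_j$ are $k$-conjugate their traces coincide, $\trace_{k_i/k}(\overline{T}_i) = \trace_{k_j/k}(\overline{T}_j)$, and this equality is precisely the extra relation. Concretely I expect the cleanest route is the contrapositive: assume $\dim_{\F_2}\Vlocal = r-1$ and deduce that the $\overline{T}_i$ have pairwise non-conjugate (hence disjoint) residues, so the reduced factors are coprime and (\dag.i) holds. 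I would verify the base and inductive behavior of the spanning count against the one guaranteed relation from Lemma~\ref{lemma:condition}, and treat the wildly ramified case (where $e_i$ even can kill a trace term mod $2$) separately, since there the contribution $e_i\trace_{k_i/k}(\overline{T}_i)$ vanishes and the bookkeeping in Lemma~\ref{lemma:condition} must be revisited.
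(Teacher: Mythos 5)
Your overall strategy is the same as the paper's: argue by contraposition, use Remark~\ref{remark:chinese} to translate the failure of (\dag.i) into the existence of roots $\alpha$ of $p_i$ and $\beta$ of $p_j$ with $\id{p}\mid\alpha-\beta$, and then produce a second linear relation among the elements $\trace_{k_i/k}(\overline{T}_i)$, independent of the relation $\sum_m e_m\trace_{k_m/k}(\overline{T}_m)=0$ of Lemma~\ref{lemma:condition}, contradicting $(\ast)$. The step you yourself flag as ``the hard part'' is, however, exactly where the content of the theorem lies, and the route you propose for it is incorrect. If $\overline{T}_i$ and $\overline{T}_j$ are $k$-conjugate, their traces down to $k$ need \emph{not} coincide: setting $\id{l}=k(\overline{T}_i)=k(\overline{T}_j)$ (after replacing one element by a conjugate), one has $\trace_{k_i/k}(\overline{T}_i)=[k_i:\id{l}]\,\trace_{\id{l}/k}(\overline{T}_i)$, so the extra relation is $[k_j:\id{l}]\,\trace_{k_i/k}(\overline{T}_i)=[k_i:\id{l}]\,\trace_{k_j/k}(\overline{T}_j)$, whose mod-$2$ coefficients are residue degrees rather than $(1,1)$. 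A concrete counterexample to your claimed equality: $k=\F_2$, $k_i=\F_4$, $k_j=\F_{16}$, $\overline{T}_i=\overline{T}_j=\omega$ a generator of $\F_4$; then $\trace_{k_i/k}(\overline{T}_i)=1$ while $\trace_{k_j/k}(\overline{T}_j)=2\cdot 1=0$.

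Even with the correct second relation in hand, you must still rule out that it is proportional (over $\F_2$, equal) to the relation of Lemma~\ref{lemma:condition}, or zero; two proportional relations do not push the span below dimension $r-1$, so no contradiction with $(\ast)$ would follow. This is the genuinely nontrivial point, and it is where the hypothesis that $d=\deg p$ is odd must enter. In the paper's proof, if the second relation, with coefficient vector $([k_j:k],[k_i:k],0,\ldots,0)$, coincided mod $2$ with $(e_1,\ldots,e_r)$, one would get $e_i\equiv[k_j:k]$, $e_j\equiv[k_i:k]$ and $e_m\equiv 0$ for the remaining indices, whence $d=\sum_m e_m[k_m:k]\equiv 2\,[k_i:k]\,[k_j:k]\equiv 0\pmod 2$, a contradiction; this is the determinant computation there, and the degenerate case where both $[k_i:k]$ and $[k_j:k]$ are even is treated separately. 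Your proposal never carries out this case analysis and never invokes the oddness of $d$, so as written it does not establish the statement.
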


\begin{proof}
  Suppose that (\dag.i) does not hold, so by
  Remark~\ref{remark:chinese} there exist a root (over
  $\overline{\cO}$) $\alpha$ say of $p_1(x)$ and $\beta$ of $p_2(x)$
  such that $\id{p} \mid \alpha - \beta$. Let $\id{l} = k_1 \cap k_2$,
  so $\overline{T_1}$ and $\overline{T_2}$ are the same as elements of
  $\id{l}$ and satisfy the relation
  \[
[k_1:\id{l}] \trace_{k_2/k}(\overline{T_2})=[k_2:\id{l}]\trace_{k_1/k}(\overline{T_1}).
    \]
In particular, the values
$\{\trace_{k_1/k}(\overline{T_1}),\ldots,\trace_{k_r/k}(\overline{T_r})\}$
in the $\F_2$-vector space $k$ satisfy the following two relations:
\begin{itemize}
\item
  $e_1 \trace_{k_1/k}(\overline{T_1}) + \cdots + e_r
  \trace_{k_r/k}(\overline{T_r}) = 0$,
     
\item
  $[k_2:k]\trace_{k_1/k}(\overline{T_1}) +
  [k_1:k]\trace_{k_2/k}(\overline{T_2})=0$.
\end{itemize}
   
   
But they also span an $r-1$-dimensional subspace, so both equations
must generate a 1-dimensional relations space. Then either
$[k_1:k] \equiv [k_2:k] \equiv 0 \pmod 2$ (in which case
$\trace_{k_1:k}(\overline{T_1}) \equiv \trace_{k_2:k}(\overline{T_2})
\equiv 0$, contradicting $(\ast)$\,) or $e_i \equiv 0 \pmod 2$ for all
$i=3,\ldots,r$ and the vectors $([k_2:k],[k_1:k])$ and $(e_1,e_2)$ are
linearly dependent in $\F_2^2$. The hypothesis $d$ odd implies 
that $\sum_{i=1}^r e_i[k_i:k]$ is odd, hence $e_1[k_1:k]+e_2[k_2:k]$
is also odd, so the determinant
\[
  \det\begin{pmatrix} e_1 & e_2\\ [k_2:k] & [k_1:k]\end{pmatrix}
  \equiv 1 \pmod 2,
\]
contradicting the fact that $([k_2:k],[k_1:k])$ and $(e_1,e_2)$ are
linearly dependent.
  
\end{proof}

 \begin{coro}
   If $p(x)$ satisfies $(\ast)$ then $\im(\delta_K)\subset (A_{\cO}^\times /
  (A_{\cO}^\times)^2)_{\square}$.
\label{thm:astimage}
\end{coro}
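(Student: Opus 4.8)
The plan is to deduce this corollary directly from the two results immediately preceding it, combining the theorem that $(\ast) \Rightarrow (\dag.\mathrm{i})$ with Theorem~\ref{thm:integrality}. The logical chain is short: condition $(\ast)$ asserts $\dim_{\F_2}\Vlocal = r-1$, and the preceding theorem shows this forces $(\dag.\mathrm{i})$ to hold. Since $(\dag.\mathrm{i})$ is one of the two alternatives defining condition $(\dag)$, the polynomial $p(x)$ satisfies $(\dag)$. Theorem~\ref{thm:integrality} then applies verbatim and yields $\im(\delta_K) \subset (A_{\cO}^\times/(A_{\cO}^\times)^2)_{\square}$, which is exactly the assertion of the corollary.

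Concretely, I would write a two-line proof: first invoke the theorem stating that $(\ast)$ implies $(\dag.\mathrm{i})$, then observe that $(\dag.\mathrm{i})$ in particular implies $(\dag)$, and finally apply Theorem~\ref{thm:integrality}. No new estimates, constructions, or case analyses are needed here, since all the substantive work—both the implication $(\ast)\Rightarrow(\dag.\mathrm{i})$ (proved via the resultant/CRT criterion of Remark~\ref{remark:chinese} together with the relations among the $\trace_{k_i/k}(\overline{T}_i)$) and the integrality statement itself (proved through Stoll's ideal-theoretic argument with the ideals $I_D$)—has already been carried out in the excerpt.

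There is essentially no obstacle to overcome: the corollary is purely a logical consequence of the two statements that precede it, so the only thing to be careful about is the correct citation of the intermediate implication and the recognition that $(\dag.\mathrm{i})$ is a sufficient condition for $(\dag)$. If anything, the ``hard part'' was already resolved in proving that $(\ast)$ forces $(\dag.\mathrm{i})$; that argument is where the combinatorial content lives, exploiting that the $r-1$-dimensional span of the traces cannot accommodate an extra linear relation coming from a shared residual root unless the parity constraint from $d$ being odd is violated. Once that implication is in hand, the present corollary is immediate.
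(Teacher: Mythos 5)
Your proof is correct and is exactly the paper's argument: the corollary is deduced by combining the preceding theorem ($(\ast)\Rightarrow(\dag.\mathrm{i})$, hence $(\dag)$) with Theorem~\ref{thm:integrality}. Nothing further is needed.
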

\begin{proof}
  Follows from the last theorem and Theorem~\ref{thm:integrality}.
\end{proof}
 \begin{rem}
   The condition $(\ast)$ is not equivalent to (\dag.i) as the following example shows.
  Let $K = \Q_8= \Q_2[t]/(t^3-t-1)$, be the unramified cubic
  extension of $\Q_2$. Consider the hyperelliptic curve
\[
\cC: y^2 = x(x-1)(x-t)(x-t^2)(x-(1+t+t^2)).
 \]
 Since all roots belong to $K$ and are not congruent modulo its
 maximal ideal, the curve $\cC$ satisfies (\dag.i). However,
 condition $(\ast)$ cannot be satisfied, since $\dim_{\F_2}(k)=3 < 5-1$.
\end{rem}

Consider the following subgroup of $A_\cO^\times$ (corresponding to
quadratic extensions unramified at places dividing $2$).
%

\begin{dfn} \label{d: u cuatro}
  Let $U_4$ denote the subgroup of $A_\cO^\times$ defined by 
\[
    U_4 = \{u \in A_\cO^\times \; : \; u \equiv \square \pmod{4 A_\cO} \text{ and
  }\cN(u) = \square\}.
  \]
\label{def:U4}
\end{dfn}
Note that $(A_\cO^\times)^2\subset U_4$ and furthermore each class
 in $U_4/(A_\cO^\times)^2$ has a representative of the form
$1+4\beta$ for some $\beta \in A_\cO$. Define the set
\begin{equation}
  \label{eq:Sdefi}
\cS := \left\{(s_1,\ldots,s_r) \in \F_2^r \; : \; \sum_{i=1}^r e_i s_i = 0\right\}.  
\end{equation}
Note that some $e_i$ must be odd, hence the subspace $\cS$ has dimension $r-1$.
\begin{lemma}
\label{part:U4cardinality}
The map $\phi : U_4 /(A_\cO^\times)^2 \to \cS$ induced by the map
\[
  1+4 \beta \mapsto
  (\Tr_{k_1/\F_2}(\overline\beta_1),\ldots,\Tr_{k_r/\F_2}(\overline\beta_r))
\]
for $\beta \in A_{\cO}$, is a group isomorphism.
In particular $\dim_{\F_2}U_4/(A_\cO^\times)^2 = r-1$.
\end{lemma}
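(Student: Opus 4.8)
The plan is to prove the isomorphism by first discarding the norm condition, analysing the resulting group one field factor at a time, and only then reimposing the norm condition. Concretely, set $W_4=\{u\in A_\cO^\times : u\equiv\square\pmod{4A_\cO}\}$, so that $U_4=\{u\in W_4 : \cN(u)=\square\}$ and $(A_\cO^\times)^2\subset U_4\subset W_4$. Since $A_\cO=\cO_1\times\cdots\times\cO_r$ and both ``$\equiv\square\pmod 4$'' and squaring are componentwise, $W_4=\prod_i W_4^{(i)}$ with $W_4^{(i)}=\{u_i\in\cO_i^\times : u_i\equiv\square\pmod{4\cO_i}\}$, and $W_4/(A_\cO^\times)^2=\prod_i W_4^{(i)}/(\cO_i^\times)^2$. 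As in the remark preceding the statement, every class in $W_4^{(i)}/(\cO_i^\times)^2$ has a representative $1+4\beta_i$ with $\beta_i\in\cO_i$, so I would define $\psi_i([1+4\beta_i])=\Tr_{k_i/\F_2}(\overline{\beta_i})\in\F_2$ and put $\Psi=\prod_i\psi_i$; the map $\phi$ of the statement is exactly the restriction of $\Psi$ to $U_4/(A_\cO^\times)^2$.

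The key local input, which I regard as the heart of the proof, is the equivalence
\[
  1+4\beta_i\in(\cO_i^\times)^2 \iff \Tr_{k_i/\F_2}(\overline{\beta_i})=0 .
\]
The implication ``$\Tr=0\Rightarrow$ square'' is Artin--Schreier together with Hensel: the polynomial $y^2-y-\overline{\beta_i}$ is separable over $k_i$ and has a root there precisely when $\overline{\beta_i}\in\wp(k_i)=\ker(\Tr_{k_i/\F_2})$, and lifting a root $y$ to $\cO_i$ gives $1+4\beta_i=(2y-1)^2$. The converse is the subtle direction in residual characteristic $2$: writing $1+4\beta_i=w^2$ forces $\overline w=1$, so $a:=w-1\in\fm_i$ satisfies $a^2+2a=4\beta_i$, and a short valuation computation on this identity rules out $v(a)<v(2)$ and hence forces $a\in 2\cO_i$; then $w=1+2s$, $\beta_i=s^2+s$, and $\Tr_{k_i/\F_2}(\overline{\beta_i})=\Tr_{k_i/\F_2}(\wp(\overline s))=0$. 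This valuation step, which controls the wild ramification, is the only genuinely delicate point; everything else is formal. Granting it, and using that $\overline 4=0$ in $k_i$ makes $1+4\beta\mapsto\Tr_{k_i/\F_2}(\overline\beta)$ additive, each $\psi_i$ is a well-defined homomorphism with kernel exactly $(1+4\cO_i)\cap(\cO_i^\times)^2$, and it is onto $\F_2$ because $\Tr_{k_i/\F_2}$ is surjective. Hence $\Psi\colon W_4/(A_\cO^\times)^2\xrightarrow{\ \sim\ }\F_2^r$ is an isomorphism.

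It remains to reimpose the norm condition and to identify $U_4/(A_\cO^\times)^2$ with $\Psi^{-1}(\cS)$. Let $\psi_K$ be the analogue of $\psi_i$ for $K$ itself, so that $1+4\gamma$ ($\gamma\in\cO$) is a square in $\cO^\times$ iff $\psi_K(1+4\gamma)=\Tr_{k/\F_2}(\overline\gamma)=0$. Expanding the norm of a principal unit gives $\cN_{K_i/K}(1+4\beta_i)=1+4\Tr_{K_i/K}(\beta_i)+16\gamma_i$ with $\gamma_i\in\cO$ (the elementary symmetric functions of the conjugates of $4\beta_i$ of degree $\ge 2$ lie in $16\cO$), so this norm lies in the domain of $\psi_K$; using the reduction-of-trace formula $\Tr_{K_i/K}(\beta_i)\equiv e_i\Tr_{k_i/k}(\overline{\beta_i})\pmod{\fm}$ (as in Lemma~\ref{lemma:condition}) and transitivity $\Tr_{k/\F_2}\circ\Tr_{k_i/k}=\Tr_{k_i/\F_2}$, one obtains $\psi_K(\cN_{K_i/K}(1+4\beta_i))=e_i\,\psi_i(1+4\beta_i)$. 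Summing over $i$ and using additivity of $\psi_K$ yields $\psi_K(\cN(u))=\sum_i e_i s_i$, where $s_i=\psi_i(u_i)$. As $\cN(u)$ already lies in the domain of $\psi_K$, it is a square iff $\psi_K(\cN(u))=0$, i.e. iff $\sum_i e_i s_i=0$, i.e. iff $(s_1,\dots,s_r)\in\cS$. Therefore $U_4/(A_\cO^\times)^2=\Psi^{-1}(\cS)$, and since $\Psi$ is an isomorphism onto $\F_2^r$ it restricts to the desired isomorphism $\phi\colon U_4/(A_\cO^\times)^2\xrightarrow{\ \sim\ }\cS$; in particular $\dim_{\F_2}U_4/(A_\cO^\times)^2=\dim_{\F_2}\cS=r-1$.
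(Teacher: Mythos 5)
Your proof is correct, and it reaches the same conclusion by a more self-contained route than the paper's. The paper establishes the homomorphism property with the same identity $(1+4\alpha)(1+4\beta)=(1+4(\alpha+\beta))(1+16z)$, but then outsources the remaining work to \cite[Lemma 1.10]{2001.02263}: well-definedness and injectivity of $\phi$ are quoted from part (1) of that lemma, and surjectivity is obtained indirectly by counting, since part (5) (suitably generalized) gives $\# U_4/(A_\cO^\times)^2 = 2^{r-1} = \#\cS$, so the injective map must be onto. You instead prove the key local criterion $1+4\beta_i\in(\cO_i^\times)^2 \iff \Tr_{k_i/\F_2}(\overline{\beta_i})=0$ from scratch --- Artin--Schreier plus Hensel in one direction, and in the other the valuation estimate forced by $a^2+2a=4\beta_i$ (if $v(a)<v(2)$ then $v(a^2+2a)=2v(a)<v(4)$, a contradiction), both of which are sound --- and you obtain surjectivity constructively: you first drop the norm condition, identify your auxiliary quotient $W_4/(A_\cO^\times)^2\simeq\F_2^r$ factor by factor (note this $W_4$ is unrelated to the subgroup $W\subset U_4$ of \eqref{eq:Wdefinition}), and then compute $\psi_K(\cN(u))=\sum_i e_i s_i$ explicitly via the norm expansion and the reduction-of-trace formula already used in Lemma~\ref{lemma:condition}, so that the norm condition visibly cuts out exactly $\cS$. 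What each approach buys: the paper's proof is shorter because the local square criterion is precisely the content of the cited lemma from the authors' earlier work; yours makes transparent why the image is exactly $\cS$ rather than inferring it from equal cardinalities, and it proves, rather than imports, the cardinality $\dim_{\F_2}U_4/(A_\cO^\times)^2=r-1$.
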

\begin{proof} Consider the equality
  \[
    (1+4\alpha)(1+4\beta)=1+4(\alpha+\beta)+16\alpha\beta=  (1+4(\alpha+\beta))(1+ 16z),
  \]
  where $z= \frac{\alpha\beta}{1+4(\alpha+\beta)} \in A_{\cO}$. By
  \cite[Lemma 1.10]{2001.02263}, $1+ 16z \in (A_\cO^\times)^2$ so
  $(1+4\alpha)(1+4\beta) \equiv 1+4(\alpha+\beta)
  \pmod{(A_\cO^\times)^2}$. This implies that the map is a morphism. 
  
  The facts that the map is well defined on equivalence classes and
  that it is injective follow from Lemma 1.10 (1) of \cite{2001.02263}. At
  last, note that by Lemma 1.10 (5) (and its natural generalization)
  of \cite{2001.02263} both sets $U_4/(A_\cO^\times)^2$ and $\cS$ have
  the same cardinality $2^{r-1}$ hence the statement.
\end{proof}
Let $W$ be the subgroup of $U_4$ given by
\begin{equation}
  \label{eq:Wdefinition} 
W = \{u=(1-4T_1w^2,\ldots,1-4T_rw^2) : w \in \cO\ \text{ and
  }\cN(u) = \square\}(A_\cO^{\times})^2 \subset U_4.
\end{equation}
\begin{thm}
  \label{thm:imageat2}
  With the previous notations,
  $W \subset \im(\delta_K)$ and
  the dimension
  $\dim_{\F_2}W/(A_\cO^\times)^2 = \dim_{\F_2} \Vlocal$.
  Moreover, the index of $W$ in $U_4$ equals
  \[
  [U_4:W] = 2^{r-1-\dim_{\F_2} \Vlocal}\,.
  \]
%
    
\end{thm}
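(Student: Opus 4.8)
The plan is to prove the three assertions in sequence, using the isomorphism $\phi$ of Lemma~\ref{part:U4cardinality} as the main computational tool throughout. First I would verify that $W$ is genuinely contained in the image of $\delta_K$. The elements defining $W$ have the shape $(1-4T_1w^2,\ldots,1-4T_rw^2)$ for $w\in\cO$, and the natural candidate producing such an element is the divisor class associated to a point whose $x$-coordinate is of the form $1/(4w^2)$ (or a suitable affine shift making the computation in \eqref{eq:cobordism} yield $\prod_i(a-T_i)$ proportional to $1-4T_iw^2$ up to squares). Concretely, one takes $a = \tfrac{1}{4w^2}$, so that $a - T_i = \tfrac{1}{4w^2}(1 - 4T_iw^2)$, and since $\tfrac{1}{4w^2}$ is a square in $K^\times$ this shows $(a-T)\equiv(1-4T_1w^2,\ldots,1-4T_rw^2)$ modulo squares. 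The condition $\cN(u)=\square$ imposed in \eqref{eq:Wdefinition} is exactly what guarantees $u\in\HO$, and one must check that such a point $P=(a,b)$ actually lies on $\cC$ over $K$ (or that the relevant divisor is $K$-rational); this is where a genus/Riemann--Roch or a direct construction of $b$ with $b^2=p(a)$ enters, and the integrality of the image is then furnished by Corollary~\ref{thm:astimage} or Theorem~\ref{thm:integrality}.

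Next I would compute the dimension $\dim_{\F_2}W/(A_\cO^\times)^2$ by transporting $W$ through $\phi$. Writing $1-4T_iw^2 = 1+4\beta_i$ with $\beta_i = -T_iw^2$, the isomorphism $\phi$ sends the class of such an element to
\[
  \bigl(\Tr_{k_1/\F_2}(\overline{T_1}\,\overline{w}^2),\ldots,\Tr_{k_r/\F_2}(\overline{T_r}\,\overline{w}^2)\bigr)\in\cS,
\]
where $\overline{w}$ is the reduction of $w$ in $k$. Since $\overline{w}\in k$ and Frobenius acts as squaring on $\F_2$-traces, one has $\Tr_{k_i/\F_2}(\overline{T_i}\,\overline{w}^2)=\Tr_{k/\F_2}\bigl(\overline{w}^2\,\Tr_{k_i/k}(\overline{T_i})\bigr)$ by transitivity of the trace, and replacing $\overline{w}^2$ by $\overline{w}$ (a bijection on $k$) shows that the image $\phi(W/(A_\cO^\times)^2)$ is precisely the set of tuples $\bigl(\Tr_{k/\F_2}(\overline{w}\cdot t_i)\bigr)_i$ as $\overline{w}$ ranges over $k$, where $t_i=\Tr_{k_i/k}(\overline{T_i})$. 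The dimension of this image equals the $\F_2$-dimension of the space spanned by the linear functionals $\overline{w}\mapsto\Tr_{k/\F_2}(\overline{w}\,t_i)$, which by the nondegeneracy of the trace pairing on $k$ equals $\dim_{\F_2}\langle t_1,\ldots,t_r\rangle=\dim_{\F_2}\Vlocal$. This gives the asserted dimension, and the index formula $[U_4:W]=2^{r-1-\dim_{\F_2}\Vlocal}$ follows immediately from Lemma~\ref{part:U4cardinality}, which gives $\dim_{\F_2}U_4/(A_\cO^\times)^2=r-1$.

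The main obstacle I anticipate is the first step: rigorously exhibiting, for each admissible $w$, an actual $K$-rational point or divisor on $\cC$ whose image under $\delta_K$ equals the prescribed element of $W$, rather than merely an abstract element of $\HO$ of the right form. One must ensure that $p(a)$ is a square in $K^\times$ (equivalently that a suitable $b$ exists) precisely under the normalization $\cN(u)=\square$, and handle the possible failure of $y(P)=0$ so that formula \eqref{eq:cobordism} applies. The constraint $\cN(u)=\square$ that cuts $W$ down inside the naive family should match exactly the solvability of $b^2=p(a)$; verifying this compatibility — and that no points are lost when $\overline{w}=0$ or when several $t_i$ coincide — is the delicate part. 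The dimension count of the second step is comparatively routine once the trace-transitivity identity is in hand.
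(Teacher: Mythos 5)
Your proposal follows the paper's proof almost verbatim in the second and third steps: the computation of $\phi(W)$ via $\beta_i=-T_iw^2$, transitivity of the trace, bijectivity of squaring on the perfect field $k$, and nondegeneracy of the pairing $(x,y)\mapsto\Tr_{k/\F_2}(xy)$ is exactly the paper's argument, as is deducing the index from Lemma~\ref{part:U4cardinality}. The one place you diverge is the construction of points realizing $W$, and your route works. The paper parametrizes points of $\cC$ near $\infty$ by the formal expansion $x(z)=z^{-2}+zO_1(z)$, $y(z)=z^{-d}+z^{-d+3}O_2(z)$ in the uniformizer $z=y/x^{(d+1)/2}$, evaluates at $z=2w$, and invokes the local square theorem (O'Meara 63:1) to discard the error factors $1+\tfrac{8w^3O_1(2w)}{1-4T_iw^2}$. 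You instead fix the $x$-coordinate to be exactly $a=(2w)^{-2}$ and must produce $b\in K$ with $b^2=p(a)$. The ``delicate compatibility'' you flag closes cleanly: since $p_i$ is the monic minimal polynomial of $T_i$, one has $N_{K_i/K}(1-4w^2T_i)=(4w^2)^{d_i}p_i(a)$, hence $\cN(1-4w^2T)=(2w)^{2d}\,p(a)$, so the condition $\cN(u)=\square$ in \eqref{eq:Wdefinition} is literally equivalent to the solvability of $b^2=p(a)$ in $K^\times$. In fact both hold automatically for every $w\in\cO$: writing $(2w)^{2d}p(a)=1+4a_{d-1}w^2+16a_{d-2}w^4+\cdots$, the normalization $a_{d-1}\equiv 0\pmod{\id{p}}$ places this unit in $1+4\id{p}\cO$, which is a square by the same Theorem 63:1 --- consistent with the paper, where the point exists for every $w$ and the norm condition is then automatic because $\im(\delta_K)\subset\HO$. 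The case $y(P)=0$ cannot occur since $v(a)<0$ while the roots of the monic integral $p$ are integral, and $w=0$ gives the trivial class, so formula \eqref{eq:cobordism} applies throughout. Net effect: both routes spend the local square theorem once; the paper spends it on the error terms of the expansion at infinity, you spend it on $p(a)$ itself.
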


\begin{proof}
  To prove the first statement, we need to construct points in $J(K)$
  hitting each element of $W$. Actually, the points we construct lie
  on $\cC(K)$. The expansion around the infinity point of the curve
  $\cC$ in terms of the local uniformizer
  $z = \frac{y}{x^{\frac{d+1}{2}}}$ is given by
  \[
    \begin{cases}
      x(z) = z^{-2} + zO_1(z),\\
      y(z) = z^{-d} + z^{-d+3}O_2(z),
    \end{cases}
  \]
  where $O_1(z), O_2(z)\in \cO[[z]]$. If $w\in \cO$, $2w$
  lies in the maximal ideal, and since $O_1(z), O_2(z)\in \cO[[z]]$,
  we get a well defined point $P = (x(2w),y(2w)) \in \cC(\cO)$
  (i.e. the series converges). Then we have
  \[
    \delta_K(P-\infty)=[((2w)^{-2}+ 2wO_1(2w)-T_1, \cdots, (2w)^{-2}+ 2wO_1(2w)-T_r)].
  \]
  Multiplying by $(2w)^2$ (a square), we get 
  \[
    \left[\left((1-4T_1 w^2)\left(1+ \frac{8w^3O_1(2w)}{1-4T_1 w^2}\right), \cdots, (1-4T_r w^2)\left(1+ \frac{8w^3O_1(2w)}{1-4T_r w^2}\right)\right)\right].
 \]
Note that the second factors are squares (by \cite[Theorem 63:1]{MR1754311}), so
\[
\delta_K(P-\infty) = [(1-4T_1 w^2, \cdots, 1-4T_r w^2)].
\]
Varying $w$ over the elements of $\cO$ proves the first statement.

To compute the dimension, we look at the image of $W$ under $\phi$.
Indeed, given $w\in\cO$ we have
\[
\phi((1-4T_1 w^2, \cdots, 1-4T_r w^2))
  = (\Tr_{k_1/\F_2}(\overline{T_1}\overline{w}^2),\ldots, \Tr_{k_r/\F_2}(\overline{T_r} \overline{w}^2)).
\]
Note that over a perfect field of characteristic two, squaring is a bijection,
so it is enough to determine for which elements
$s= (s_1,\ldots,s_r) \in \cS$, there exists $v \in k$ such that
$\Tr_{k_i/\F_2}(\overline{T_i} v) = s_i$ for all
$1 \le i \le r$. Let $\sigma_i = \Tr_{k_i/k}(\overline T_i)\in k$, so by
the property of traces in towers 
\[
  \phi(W) = \{(\Tr_{k/\F_2}(\sigma_1 v) , \ldots, \Tr_{k/\F_2}(\sigma_r v)) \mid v\in k\}.
\]
Recall that the bilinear mapping $k \times k \to \F_2$ given by
$(x, y)\mapsto \Tr_{k/\F_2}(xy)$ is perfect.  By definition, the set
$\{\sigma_1,\ldots , \sigma_r\}\subset k$ generates an $\F_2$-vector space
of dimension $\dim_{\F_2} \Vlocal$ in $k$, then the same holds for the set of linear functions
$(\Tr_{k/\F_2}(\sigma_1 v) , \ldots, \Tr_{k/\F_2}(\sigma_r v))$,
hence $\dim_{\F_2}W/(A_\cO^\times)^2 =
\dim_{\F_2}\phi(W)=\dim_{\F_2}\Vlocal$.
\end{proof}

\begin{rem}
  When $p(x)$ satisfies $(\ast)$, the last statement proves that
  $U_4=W$, hence $U_4/(A_\cO^\times)^2$ is contained in the image of the elements of $\cC(K)$ under $\delta_K$.
\end{rem}

\section{Archimedean places}
\label{s: archimedean places} Let $K$ be an archimedean place,
namely $K= \R$ or $K=\C$. If $K=\C$, then $A_K= A_{\C} \simeq \C^d$,
and the map
$\delta_K:J(\C)/2J(\C) \to (A_{\C}^\times/
(A_{\C}^\times)^2)_{\square}= \{1\}$ is the trivial map.

Thus suppose that $K = \R$. Let $2t$ denote the number of complex roots of $p(x)$ and $2s+1$ the number of real ones (so $d = 2s+1+2t$). Then 
\begin{equation}\label{e: A para los reales} A_{\R}\simeq \R^{2s+ 1}\times \C^{t}.
\end{equation}
Order the real roots in the form
$\tilde{v}<v_1<v_1'<v_2< v_2'\ldots < v_s < v_s'$
(as in Figure~\ref{fig:realpoints}) and let
$w_1,\overline{w_1},\ldots,w_t,\overline{w_t}$ denote the complex ones. Let $P \in J(\R)$ be a real point.
Then
$$ \delta_{\R}(P-\infty) = (x(P)- \tilde{v},x(P)-v_1,x(P)-v_1',\ldots, x(P)- v_s',x(P)-w_1,\ldots,x(P)-w_{t}). $$
  \begin{figure}[ht!]
  \includegraphics[scale=0.5]{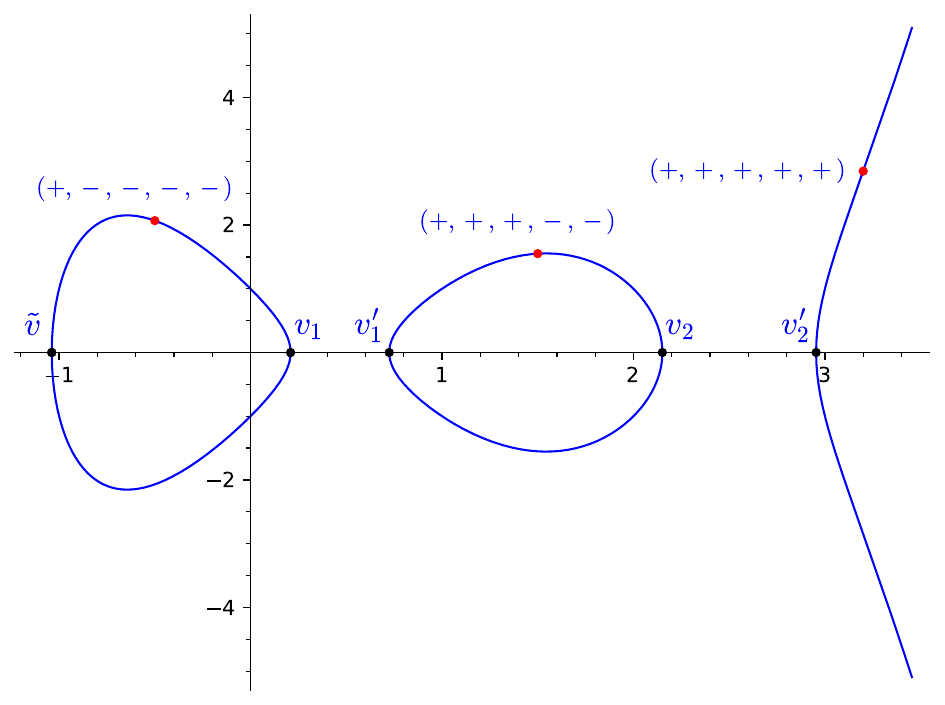}
  \caption{Points of $\cC(\R)$}
  \label{fig:realpoints}
\end{figure}

\begin{lemma}  We have $\mathrm{Im}(\delta_{\R})\subset \{\pm 1\}^{2s+ 1}\times \{1\}^t$ and moreover
$$\mathrm{Im}(\delta_{\R})= \{ (1, \epsilon_{1}, \epsilon_{1}, \ldots, \epsilon_{s}, \epsilon_{s}, 1, \ldots , 1)\in \{\pm 1\}^{2s+ 1}\times \{1\}^t \mid \epsilon_{i}\in \{\pm 1\}, i= 1, \ldots, s \}.$$
\label{lemma:arqimage}
\end{lemma}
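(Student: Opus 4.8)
The plan is to read off the image of $\delta_\R$ one coordinate at a time, exploiting that $\delta_\R$ is a group homomorphism and that, by~\eqref{eq:divisor_orbit}, $J(\R)$ is generated by the classes $D=\sum_\sigma(\sigma(P)-\infty)$ attached to Galois orbits. Over $K=\R$ these orbits are exactly the real points $P\in\cC(\R)$ (orbit of size $1$) and the conjugate pairs $\{Q,\overline{Q}\}$ of non-real points $Q\in\cC(\C)$, so it suffices to evaluate $\delta_\R$ on these two kinds of generators via~\eqref{eq:cobordism}.

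First I would dispose of the complex factors and of the complex generators. In the $t$ factors isomorphic to $\C$ one has $\C^\times/(\C^\times)^2=\{1\}$, so every class is trivial there, while in each of the $2s+1$ real factors $\R^\times/(\R^\times)^2=\{\pm1\}$ records a sign; this already gives the inclusion $\im(\delta_\R)\subset\{\pm1\}^{2s+1}\times\{1\}^t$. For a non-real $Q=(a,b)$ the $j$-th real coordinate of $\delta_\R(Q+\overline{Q}-2\infty)=(a-T)(\overline a-T)$ equals $(a-v)(\overline a-v)=\lvert a-v\rvert^2>0$ at the real root $v$, hence is trivial; thus the conjugate-pair generators map to the identity and $\im(\delta_\R)$ is generated by the real points alone.

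The heart of the matter is the sign analysis for real points. Factoring $p(x)=\bigl(\prod_{\text{real }v}(x-v)\bigr)\prod_{j=1}^t(x-w_j)(x-\overline{w_j})$ and noting that each complex factor $(x-w_j)(x-\overline{w_j})=\lvert x-w_j\rvert^2$ is positive on $\R$, the sign of $p(x)$ for real $x$ is that of the product over the $2s+1$ real roots. A point $P\in\cC(\R)$ requires $p(x(P))=y(P)^2\ge 0$; since $p$ is monic of odd degree, counting sign changes across the ordered roots $\tilde v<v_1<v_1'<\cdots<v_s<v_s'$ shows that $p\ge 0$ precisely on $[\tilde v,v_1]\cup[v_1',v_2]\cup\cdots\cup[v_s',\infty)$. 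Consequently every real $x(P)$ satisfies $x(P)\ge\tilde v$ and $x(P)\notin(v_i,v_i')$, forcing $\sign(x(P)-\tilde v)=+$ and $\sign(x(P)-v_i)=\sign(x(P)-v_i')$. Hence $\delta_\R(P-\infty)$ has the shape $(1,\epsilon_1,\epsilon_1,\ldots,\epsilon_s,\epsilon_s,1,\ldots,1)$, which together with the previous paragraph yields the inclusion ``$\subseteq$''.

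For the reverse inclusion I would exhibit enough real points. Choosing $P_j$ with $x(P_j)$ in the interior of $[v_j',v_{j+1}]$ (setting $v_0':=\tilde v$, $v_{s+1}:=\infty$) for $j=0,\ldots,s$, the computation above gives $\delta_\R(P_j-\infty)=+1$ in the coordinates $\tilde v,v_1,v_1',\ldots,v_j,v_j'$ and $=-1$ in the remaining real coordinates, so $\delta_\R(P_j-\infty)\,\delta_\R(P_{j-1}-\infty)$ equals $-1$ exactly on the pair $(v_j,v_j')$ and $+1$ elsewhere. As $j$ runs over $1,\ldots,s$ these vectors toggle each $\epsilon_i$ independently and thus generate the whole target group, giving equality. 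The only point needing care is the boundary behaviour: the Weierstrass points, where $x(P)$ is a root and $y(P)=0$, fall outside formula~\eqref{eq:cobordism}, but they are not needed since the interior points already generate the stated group.
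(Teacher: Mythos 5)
Your proposal is correct and follows essentially the same route as the paper: both reduce to the generators coming from real points and conjugate pairs, kill the conjugate pairs via $(a-v)(\overline a-v)=\lvert a-v\rvert^2>0$, derive the sign pattern $(1,\epsilon_1,\epsilon_1,\ldots,\epsilon_s,\epsilon_s,1,\ldots,1)$ from the condition $p(x(P))=y(P)^2\ge 0$, and obtain surjectivity by taking points in each interval between consecutive real roots. Your write-up is merely more explicit (the toggling argument for generation, and the remark that Weierstrass points can be avoided, which the paper handles by citing that classes in $J(K)/2J(K)$ are represented by divisors supported away from the roots of $p$).
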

\begin{proof} The fact that $\mathrm{Im}(\delta_{\R})\subset \{\pm
  1\}^{2s+ 1}\times
  \{1\}^t$ is clear from (\ref{e: A para los reales}). A point between
  $\tilde{v}$ and $v_1$ has image $(1,-1,\ldots,-1)\times
  (1)^t$ as shows Figure~\ref{fig:realpoints}. In general, a real
  point between $v_i'$ and $v_{i+1}$ maps into a vector with
  $2i+1$ plus signs, and
  $2s+1-(2i+1)$ minus ones (and trivial at the complex places). This
  proves the containment
 $$\mathrm{Im}(\delta_{\R}) \supset \{ (1, \epsilon_{1}, \epsilon_{1}, \ldots, \epsilon_{s}, \epsilon_{s}, 1, \ldots , 1)\in \{\pm 1\}^{2s+ 1}\times \{1\}^t \mid \epsilon_{i}\in \{\pm 1\}, i= 1, \ldots, s \}.$$
 The opposite inclusion is clear for real points $P\in\cC(\R)$.
 If $P\in\cC(\C)-\cC(\R)$ then
  \[
    \delta_{\R}(P-\infty)\delta_{\R}(\overline{P}-\infty) =
    (\abs{x(P)-\tilde{v}}^2,\abs{x(P)-v_1}^2,\ldots,\abs{x(P)-v_s'}^2)\times (1)^t,
  \]
  a vector whose components are all positive (and hence trivial in the quotient).
\end{proof}

\section{2-Selmer groups and Class groups}
\label{s: main theorems}
In this section $K$ denotes a number field and $\cC$ an
hyperelliptic curve defined over $K$. Keeping the previous notation,
if $p(x)$ factors like
\[
p(x) = p_1(x) \cdots p_r(x),
\]
then the $K$-algebra $A_K$ is isomorphic to $K_1 \times \cdots \times K_r$, where $K_i$ is the number field $K[x]/(p_i(x))$. We will denote by $\Cl(A_K)$ the finite abelian group
\[
\Cl(A_K) := \Cl(K_1) \times \cdots \Cl(K_r),
\]
where $\Cl(K_i)$ is the class group of the number field $K_i$. A similar notation will be used for the set of ideals, fractional
ideals, principal ideals and the ring of integers of $A_K$. If
$\alpha \in A_K$ corresponds to $\alpha = (\alpha_1,\ldots,\alpha_r)$ under the isomorphism (\ref{eq:fielddecomp}), we denote by $A_K(\sqrt{\alpha})$ the
$K$-algebra
$K_1(\sqrt{\alpha_1})\times \cdots\times K_r(\sqrt{\alpha_r})$, and we call
the extension $A_K(\sqrt{\alpha})/A_K$ unramified if each extension in
the previous product is unramified.

For $v$ a real place of $K$ we follow the notations of \S\ref{s: archimedean places}, i.e.  we denote by $\tilde{v}, v_1, v_1', \ldots, v_{s_v}, v_{s_v}'$ the real roots of $p(x)$ in $K_v$, where $s_v\in \Z_{\ge 0}$ depends on $v$.
\begin{rem}
  A real root $v$ of the polynomial $p_i(x)$ determines an embedding of
  $K_i$ into $\R$. Abusing notation, we will use the same symbol to
  denote either a real root of $p_i(x)$ or the embedding it
  determines.
\end{rem}

From now on we assume the following hypotheses:
\begin{hyps} The hyperelliptic curve $\cC$ and the field $K$ satisfy the following conditions:
  \begin{enumerate}
  \item The degree of $p(x)$ is odd.
  \item The narrow class group of $K$ is odd.
    
  \item For all finite places $v$ of $K$, $\cC/K_v$ satisfies (\dag).
    
  \end{enumerate}
\label{hyp:hyp}  
\end{hyps}

\begin{rem}
The first two conditions together with $(\dag.i)$ are very easy to verify with most computational programs (like \cite{PARI2}).
\end{rem}

The hypothesis $(\dag)$ implies that for all finite places $v$ of $K$ the
image of the connecting morphism $\delta_{K_v}$ belongs to the  
subgroup $(A_{\cO_v}^\times /(A_{\cO_v}^\times)^2)_{\square}\subset (A_{K_v}^\times /(A_{K_v}^\times)^2)_{\square}$. 

\begin{dfn}  Let
    $C_*(\cC) \subset A_{K}^\times / (A_{K}^\times)^2$ be the
    subgroup made of elements $[\alpha]$
    satisfying the following properties:
  \begin{itemize}
  \item $A_{K}(\sqrt{\alpha})$ is unramified at all finite places of $A_{K}$,
  \item if $v$ is a real place of $K$ then $A_{K}(\sqrt{\alpha})$ is
    unramified at $\tilde{v}$ (equivalently $\tilde{v}(\alpha)> 0$),
  \item if $v$ is a real place of $K$ then $A_{K}(\sqrt{\alpha})$
    ramifies at $v_i$ if and only if it
    ramifies at ${v}_i'$ for each $i= 1,\ldots, s_v$.
  \end{itemize}
\end{dfn}

%
The group $C_*(\cC)$ plays a crucial role in our bounds, as it is deeply
connected to the $2$-class group of $A_K$. Let $\Frac(A_K)$ denote the
group of fractional ideals of $A_K$. Consider the following subgroup of
the group of principal ideals:
\begin{align*}
  P_*(\cC) &= \{ (\alpha) \in \Frac(A_K) : v_i(\alpha) \,v_i'(\alpha)>0,  \text{for each real place $v$, } \ i=1, ..., s_v\}.
\end{align*}
\begin{rem}
  If $A_K = K_1 \times \ldots \times K_r$, the places $v_i$ and $v_i'$
  need not be places of the same field $K_j$. A priori the condition
  $v_i(\alpha)v_i'(\alpha)>0$ might imply a relation between embeddings of
  different fields.
\end{rem}
\begin{dfn}
    \label{dfn:class group}
    Let $\Cl_*(A_K,\cC)$ be the class group attached to $P_*(\cC)$, i.e. 
 $$\Cl_*(A_K,\cC) = \Frac(A_K)/P_*(\cC)$$
\end{dfn}

\begin{prop}
\label{prop:C1asclassgroup}
  The group $C_*(\cC)$ is isomorphic to the torsion $2$-subgroup of $\Cl_*(A_K,\cC)$,
  i.e. $C_*(\cC) \simeq \Cl_*(A_K,\cC)[2]$.
\end{prop}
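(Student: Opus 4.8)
The plan is to exhibit a surjection from the group of fractional ideals $\Frac(A_K)$ onto $C_*(\cC)$ whose kernel is exactly $P_*(\cC)$, thereby identifying $C_*(\cC)$ with $\Cl_*(A_K,\cC)$, and then to check that the image lands precisely in the $2$-torsion. The natural thing to do is to send a class $[\alpha] \in C_*(\cC)$ (represented by $\alpha \in A_K^\times$) to the ideal class of the integral ideal $\id{a}$ defined by $\id{a}^2 = (\alpha)$; the point is that since $A_K(\sqrt{\alpha})/A_K$ is unramified at every finite place, the principal ideal $(\alpha)$ is a square of an ideal in $\Frac(A_K)$. So first I would make this ``square root of an ideal'' construction precise, explaining why unramifiedness of $A_K(\sqrt{\alpha})$ at a finite place $\id{q}$ forces $v_{\id{q}}(\alpha)$ to be even, so that $(\alpha) = \id{a}^2$ for a well-defined fractional ideal $\id{a} = \prod_{\id{q}} \id{q}^{v_{\id{q}}(\alpha)/2}$.

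Next I would verify that this assignment descends to a well-defined homomorphism $C_*(\cC) \to \Cl_*(A_K,\cC)$. The map $[\alpha] \mapsto [\id{a}]$ clearly sends products to products. For well-definedness, changing the representative $\alpha$ within its class in $A_K^\times/(A_K^\times)^2$ multiplies $\alpha$ by a square $\beta^2$, which replaces $\id{a}$ by $\id{a}\,(\beta)$; I must check that $(\beta)$ lies in $P_*(\cC)$, which holds because $v_i((\beta^2)) = 2v_i((\beta))$ has the same sign condition built into $P_*(\cC)$. The archimedean conditions in the definition of $C_*(\cC)$ are exactly what match the definition of $P_*(\cC)$: the condition $\tilde{v}(\alpha)>0$ and the paired ramification condition at $v_i, v_i'$ translate, after taking the square root of the ideal, into the sign condition $v_i(\id{a})v_i'(\id{a})>0$ (equivalently $v_i(\alpha)v_i'(\alpha)>0$) defining $P_*(\cC)$.

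To finish I would establish injectivity and surjectivity, and pin down the image as the $2$-torsion. For injectivity: if $[\id{a}] = 0$ in $\Cl_*(A_K,\cC)$, then $\id{a} = (\gamma)$ for some $\gamma$ with $(\gamma) \in P_*(\cC)$, whence $(\alpha) = (\gamma^2)$, so $\alpha = u\gamma^2$ for a unit $u \in A_\cO^\times$; the sign and local conditions defining $C_*(\cC)$ together with Hypothesis~\ref{hyp:hyp}(2) (the narrow class group of $K$, and hence the relevant unit contribution, being odd) should force $u$ to be a square up to the equivalence, so $[\alpha]=1$. For surjectivity onto $\Cl_*(A_K,\cC)[2]$: given a class of order dividing $2$ in $\Cl_*(A_K,\cC)$, represented by an ideal $\id{a}$ with $\id{a}^2 \in P_*(\cC)$, i.e. $\id{a}^2 = (\alpha)$ with $(\alpha)$ satisfying the sign conditions, I recover $\alpha \in A_K^\times$ and check it represents a class in $C_*(\cC)$ mapping to $[\id{a}]$; the reverse translation of the sign conditions gives the archimedean requirements, and $\id{a}^2 = (\alpha)$ being a square guarantees $A_K(\sqrt{\alpha})$ is unramified at all finite places. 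The main obstacle I anticipate is the unit bookkeeping: controlling the ambiguity in passing between $\alpha$ and the ideal $\id{a}$ requires that units of $A_\cO$ satisfying the congruence and sign constraints be squares modulo $P_*(\cC)$, and this is precisely where the oddness of the narrow class group of $K$ in Hypothesis~\ref{hyp:hyp} must be invoked, analogously to the elliptic-curve case in \cite{2001.02263}.
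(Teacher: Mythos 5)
Your construction is not the one the paper uses, and it does not work: the isomorphism of the proposition is \emph{not} induced by the map $[\alpha]\mapsto[\id{a}]$ with $\id{a}^2=(\alpha)$. The paper's proof is a class field theory duality: writing $L$ for the maximal abelian extension of $A_K$ unramified at all finite places and with the prescribed behaviour at $\tilde{v}$ and at the pairs $(v_i,v_i')$, Kummer theory gives $C_*(\cC)\simeq\Hom(\Gal(L/A_K),\mu_2)$, Artin reciprocity gives $\Gal(L/A_K)\simeq\Frac(A_K)/P_*(\cC)=\Cl_*(A_K,\cC)$, and one concludes with the abstract (non-canonical) isomorphism $\Hom(G,\mu_2)\simeq G[2]$ valid for any finite abelian group $G$. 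Your map is a genuinely different, would-be canonical homomorphism, and in general it is neither well defined, nor injective, nor surjective.

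Concretely: (i) replacing $\alpha$ by $\alpha\beta^2$ replaces $\id{a}$ by $\id{a}(\beta)$, and $(\beta)$ need not lie in $P_*(\cC)$, since $P_*(\cC)$ is in general a \emph{proper} subgroup of the principal ideals --- that is the whole point of $\Cl_*$; (ii) the kernel of your map consists of the classes of units $u\in A_\cO^\times$ for which $A_K(\sqrt{u})$ is unramified at all finite places and meets the sign conditions, and nothing forces such $u$ to be squares: condition (2) of Hypotheses~\ref{hyp:hyp} concerns the narrow class group of the base field $K$ and gives no control over the units of $A_\cO=\prod\cO_i$, whose fields $K_i$ are exactly the objects whose $2$-class groups the theorem is measuring; (iii) for surjectivity, evenness of all the valuations of $\alpha$ does \emph{not} imply that $A_K(\sqrt{\alpha})$ is unramified at the places above $2$, so not every $2$-torsion ideal class is hit. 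For instance, if $x^2+5$ were a factor of $p(x)$, so that $K_1=\Q(\sqrt{-5})$, the class of $(-1,1,\ldots,1)$ is a nontrivial element of $C_*(\cC)$ (it cuts out the Hilbert class field $\Q(\sqrt{-5},\sqrt{-1})$ in the first factor and is trivial, hence positive, at every embedding attached to a real root) which lies in the kernel of your map; while the $2$-torsion class of $\id{p}_2=(2,1+\sqrt{-5})$ is not in its image, because $\id{p}_2^2=(\pm 2)$ and neither $\Q(\sqrt{-5},\sqrt{2})$ nor $\Q(\sqrt{-5},\sqrt{-2})$ is unramified at $2$. These two defects cancel only at the level of cardinalities, which is precisely what the duality argument exploits; the direct map cannot be repaired into an isomorphism, so the class-field-theoretic route is essential here.
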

\begin{proof}
  The proof mimics the one given in \cite[Proposition
  2.10]{2001.02263}. If $\alpha \in C_*(\cC)$ (say
  $\alpha = (\alpha_1,\ldots,\alpha_r)$) then the extension
  $F=A_K(\sqrt{\alpha})$ (i.e.,
  $K_1(\sqrt{\alpha_1}) \times \cdots \times K_r(\sqrt{\alpha_r})$) is
  an extension of $A_K$ that is abelian and unramified at all finite
  places (meaning that each $L_i/K_i$ is abelian and unramified at all
  finite places). Furthermore, the extension $F/A_K$ is unramified at
  the Archimedean place $\tilde{v}$ above $v$, and satisfies that it
  ramifies at a place $v_i$ if and only if it ramifies at the place
  $v_i'$.  Let $L=L_1 \times \cdots \times L_r$ denote the maximal
  abelian extension of $A_K$ which is unramified at all finite places
  and satisfies the same property at the Archimedean places. Clearly
  $F \subset L$ and $C_*(\cC) \simeq \Hom(\Gal(L/A_K),\mu_2)$ (the extension $F$
  corresponds to the morphism whose kernel equals $\Gal(L/F)$).  The
  Artin reciprocity map $\Frac(A_K) \to \Gal(L/A_K)$ has kernel
  $P_*(\cC)$, so $\Cl_*(A_K,\cC)\simeq \Gal(L/A_K)$ and
  $C_*(\cC) \simeq \Cl_*(A_K,\cC)[2]$.
  \end{proof}

  The hypotheses~\ref{hyp:hyp} are needed to bound the Selmer group
  $\Sel_2(J)$ in terms of $\Cl_*(A_K,\cC)[2]$.  For that purpose, we
  need to introduce two subgroups  of
  $A_{K}^\times / (A_K^\times)^2$.
  If $v$ is a finite place of $K$ dividing
  $2$, we denote by $U_{4, v}\subset A_{\cO_v}^\times$ the subgroup
  introduced in definition \ref{d: u cuatro} and  by $W_v\subset U_{4,
  v}$ the subgroup defined in (\ref{eq:Wdefinition}).

  \begin{dfn} Let
    $C_{W}(\mathcal{C})\subset A_{K}^\times / (A_{K}^\times)^2$ be the
    subgroup of the classes $[\alpha]\in A_{K}^\times / (A_{K}^\times)^2$
    satisfying the following properties:
  \begin{itemize}
  \item for each place $v$ of $K$ over $2$ the class $[\alpha]$
    belong to the image of $W_v$ in
    $A_{K_v}^\times / (A_{K_v}^\times)^2$,
  \item $A_{K}(\sqrt{\alpha})$ is unramified at all finite places of $A_{K}$,
  \item if $v$ is a real place of $K$ then $A_{K}(\sqrt{\alpha})$ is
    unramified at $\tilde{v}$,
  \item if $v$ is a real place of $K$ then $A_{K}(\sqrt{\alpha})$
    ramifies at $v_i$ if and only if it
    ramifies at $v_i'$ for each $i= 1,\ldots, s_v$.
  \end{itemize}
\end{dfn}  

\begin{prop}
  If hypotheses~\ref{hyp:hyp} are satisfied then $C_W(\cC) \subset \Sel_2(J)$.
\label{prop:lowerbound}
\end{prop}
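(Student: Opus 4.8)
The plan is to take a class $[\alpha]\in C_W(\cC)$, written $\alpha=(\alpha_1,\dots,\alpha_r)$, and verify the two requirements in the definition of $\Sel_2(J)$: first that $[\alpha]$ lies in the ambient group $(A_K^\times/(A_K^\times)^2)_\square$, i.e. that $\cN(\alpha)\in(K^\times)^2$; and second that $\mathrm{loc}_v([\alpha])\in\im(\delta_{K_v})$ for every place $v$ of $K$. I would establish the global square norm first, since it is precisely what lets the local verification at the odd finite places go through.

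For the norm, I would compute the valuation of $\cN(\alpha)$ place by place. At an odd finite place $v$ the condition that $A_K(\sqrt\alpha)$ is unramified forces $\mathrm{loc}_v(\alpha)$ to be an integral unit class, so $v(\cN(\alpha))$ is even. At a place $v\mid 2$ the defining condition $\mathrm{loc}_v([\alpha])\in\im(W_v)$, together with $W_v\subset U_{4,v}$ and the fact that elements of $U_{4,v}$ have square norm, shows $\cN(\mathrm{loc}_v(\alpha))\in(K_v^\times)^2$; in particular $v(\cN(\alpha))$ is even and $\cN(\alpha)$ is a local square there. Consequently the quadratic extension $K(\sqrt{\cN(\alpha)})/K$ is unramified at every finite place. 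Here I invoke hypotheses~\ref{hyp:hyp}: since the narrow class number of $K$ is odd, its narrow Hilbert class field has odd degree and admits no quadratic subextension, so $K(\sqrt{\cN(\alpha)})=K$ and $\cN(\alpha)\in(K^\times)^2$. Thus $[\alpha]\in(A_K^\times/(A_K^\times)^2)_\square$.

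It remains to check the local conditions. At $v\mid 2$ this is immediate: the $W_v$ condition gives $\mathrm{loc}_v([\alpha])\in\im(W_v)$, and $W_v\subset\im(\delta_{K_v})$ by Theorem~\ref{thm:imageat2}. At an odd finite place $v$, unramifiedness places $\mathrm{loc}_v([\alpha])$ in $A_{\cO_v}^\times/(A_{\cO_v}^\times)^2$, while the global square norm just established shows $\cN(\mathrm{loc}_v(\alpha))$ is a local square, so $\mathrm{loc}_v([\alpha])\in\HO$. Since hypotheses~\ref{hyp:hyp} guarantee (\dag) holds, Corollary~\ref{coro:index} (with $d_2=0$ at odd $v$) gives $\im(\delta_{K_v})=\HO$, whence $\mathrm{loc}_v([\alpha])\in\im(\delta_{K_v})$. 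At a real place $v$ the three real conditions defining $C_W(\cC)$---positivity at $\tilde v$ and equal ramification behaviour at $v_i$ and $v_i'$---translate precisely into the sign pattern describing $\im(\delta_{\R})$ in Lemma~\ref{lemma:arqimage}, and at complex places the target group is trivial. This exhausts all places and shows $[\alpha]\in\Sel_2(J)$.

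The main obstacle is the square norm condition, and more precisely its logical placement. At the odd finite places the unramifiedness hypothesis only yields even valuation, not a priori that $\cN(\alpha)$ is a local square, so one cannot directly conclude that $\mathrm{loc}_v([\alpha])$ meets the odd-place Selmer condition. The way around this is to first prove that $\cN(\alpha)$ is a global square using the oddness of the narrow class number, which simultaneously supplies the missing local squareness at every odd place; only then does the odd-place verification close up. Everything else is a routine matching of the defining conditions of $C_W(\cC)$ against the local images computed in Theorem~\ref{thm:imageat2}, Corollary~\ref{coro:index} and Lemma~\ref{lemma:arqimage}.
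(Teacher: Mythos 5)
Your proof is correct, and for the local verifications it follows the same route as the paper: Lemma~\ref{lemma:arqimage} at the archimedean places, Corollary~\ref{coro:index} at the odd finite places, and Theorem~\ref{thm:imageat2} at the places above $2$. The one genuine difference is that you first prove $\cN(\alpha)\in(K^\times)^2$ by observing that $K(\sqrt{\cN(\alpha)})/K$ is unramified at every finite place and then invoking the oddness of the narrow class number; the paper's proof skips this step entirely. Your instinct here is sound and the extra step is not idle pedantry: the Selmer group lives inside $(A_{K}^\times/(A_{K}^\times)^2)_{\square}$ while $C_W(\cC)$ is only defined as a subgroup of $A_K^\times/(A_K^\times)^2$, and, more to the point, at an odd place $v$ the unramifiedness condition only puts $\mathrm{loc}_v(\alpha)$ in $A_{\cO_v}^\times$ up to squares, whereas membership in $\im(\delta_{K_v})=\HO$ also requires $\cN(\mathrm{loc}_v(\alpha))$ to be a square in $K_v^\times$. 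The paper leaves this norm condition implicit; your argument (even valuation at odd places, local squareness at $v\mid 2$ coming from $W_v\subset U_{4,v}$, then the narrow-class-number argument to globalize) supplies it cleanly, at the mild cost of using the oddness of the narrow class group of $K$, which the paper otherwise reserves for the upper bound in Theorem~\ref{thm:bounds}.
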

\begin{proof} Let $\alpha \in A_K^\times$ such that
  $[\alpha]\in C_{W}(\mathcal{C})$. We need to verify
  $\mathrm{loc}_v([\alpha]) \in \im(\delta_v)$
  for each
  place $v$ of $K$,
  where
  $\mathrm{loc}_v: (A_{K}^\times /(A_{K}^\times)^2)_{\square}
  \rightarrow (A_{K_v}^\times /(A_{K_v}^\times)^2)_{\square}$ is the
  natural map. At Archimedean places, the result follows from
  Lemma~\ref{lemma:arqimage}. If $v$ corresponds to a prime not
  dividing $2$, then the condition $A_K(\sqrt{\alpha})/A_K$ unramified
  implies that $\alpha$ (up to squares) is a unit in $A_{\cO_v}$, so
  the result follows from 
  Corollary~\ref{coro:index}. The result for places dividing $2$
  follows from Theorem~\ref{thm:imageat2}.
\end{proof}

To obtain an upper bound we use the following auxiliary group.

\begin{dfn} Let $\tilde{C}(\mathcal{C})\subset A_{K}^\times /
  (A_{K}^\times)^2$ be the subgroup of the $[\alpha]\in A_{K}^\times /
  (A_{K}^\times)^2$ such that 
  \begin{itemize}
  \item  For each finite place $w$ of $A_K$ the $w$-adic valuation of $\alpha$ is even. 
   \item $\cN(\alpha)$ is a square in $K$. 
  \item  if $v$ is a real place of $K$ then $A_{K}(\sqrt{\alpha})$ is unramified at $\tilde{v}$ (i.e. $\tilde{v}(\alpha)> 0$),
 \item if $v$ is a real place of $K$ then $A_{K}(\sqrt{\alpha})$
    ramifies at $v_i$ if and only if it
    ramifies at $v_i'$ for each $i= 1,\ldots, s_v$.  \end{itemize}
\end{dfn}  

\begin{prop}
  If hypotheses~\ref{hyp:hyp} are satisfied, then $\Sel_2(J) \subset \tilde{C}(\cC)$.
\label{prop:upperbound}
\end{prop}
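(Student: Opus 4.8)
The plan is to unpack the definition of the $2$-Selmer group and verify each of the four defining conditions of $\tilde{C}(\cC)$ in turn, invoking the local computations already established. Fix a class $[\alpha]\in\Sel_2(J)$. Under the identification of Theorem~\ref{thm:kummermap} it lies in $\HK$, the kernel of the norm map, so $\cN(\alpha)$ is a square in $K$ and the second defining condition holds immediately. The remaining three conditions are local in nature, and by Definition~\ref{defi:2-Selmer} we know $\mathrm{loc}_v([\alpha])\in\im(\delta_v)$ for every place $v$ of $K$; so I would simply read off each condition from the description of $\im(\delta_v)$ at the relevant place.

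For the first (finite) condition I would argue place by place over the finite places of $K$. Hypotheses~\ref{hyp:hyp} guarantee that $\cC/K_v$ satisfies \textup{(\dag)} at every finite place, so Theorem~\ref{thm:integrality} (together with Corollary~\ref{coro:index}) gives $\im(\delta_v)\subset (A_{\cO_v}^\times/(A_{\cO_v}^\times)^2)_{\square}$. Consequently $\mathrm{loc}_v([\alpha])$ is, up to squares, an integral unit at every place $w$ of $A_K$ lying over $v$, which is precisely the statement that the $w$-adic valuation of $\alpha$ is even. Running over all finite places $v$ of $K$ (including those dividing $2$) exhausts every finite place $w$ of $A_K$ and establishes the first condition.

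The last two conditions are archimedean. At a complex place of $K$ the local group is trivial and there is nothing to check, so let $v$ be a real place of $K$. The Selmer condition forces $\mathrm{loc}_v([\alpha])\in\im(\delta_{\R})$, and Lemma~\ref{lemma:arqimage} describes this image explicitly as the set of sign vectors $(1,\epsilon_1,\epsilon_1,\ldots,\epsilon_{s_v},\epsilon_{s_v},1,\ldots,1)$ with $\epsilon_i\in\{\pm1\}$. The component at $\tilde{v}$ equals $1$, i.e.\ $\tilde{v}(\alpha)>0$, so $A_K(\sqrt{\alpha})$ is unramified at $\tilde{v}$, giving the third condition. The forced equality of the two signs attached to each pair $(v_i,v_i')$ translates directly into the assertion that $A_K(\sqrt{\alpha})$ ramifies at $v_i$ if and only if it ramifies at $v_i'$, which is the fourth condition.

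Since every ingredient is already in place, the argument is essentially bookkeeping: the proposition repackages the local statements of Sections~\ref{section:local} and~\ref{s: archimedean places} into global conditions. The only point requiring genuine care is the appeal to \textup{(\dag)} at the primes over $2$ in order to guarantee integrality of the local image there (Theorem~\ref{thm:integrality}); this is exactly where the hypotheses~\ref{hyp:hyp} enter, and without them the first condition could fail at even places.
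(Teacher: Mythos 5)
Your proof is correct and follows the same route as the paper: the archimedean conditions are read off from Lemma~\ref{lemma:arqimage}, the integrality (even valuation) condition at finite places from Theorem~\ref{thm:integrality} via hypothesis \textup{(\dag)}, and the norm condition is automatic since $\Sel_2(J)\subset\HK$. The paper's own proof is just a two-line version of exactly this bookkeeping.
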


\begin{proof}
  The condition at the Archimedean places is clear (by
  Lemma~\ref{lemma:arqimage}). The result for finite primes follows from
  Theorem~\ref{thm:integrality}.
\end{proof}

Note that $C_W(\cC) \subset C_*(\cC) \subset \tilde{C}(\cC)$, so it is
enough to bound the indexes $[\tilde{C}(\cC):C_*(\cC)]$ and
$[C_*(\cC):C_W(\cC)]$ in order to get explicit bounds for $\Sel_2(J)$ in terms of the $2$-class group $\Cl_*(A_K,\cC)[2]$.

\begin{thm} 
Following the previous notations,
  \begin{itemize}
\item $[\tilde{C}(\cC) : C_{\ast}(\cC)] \le 2^{g[K:\Q]}.$
\item $[C_*(\cC) : C_W(\cC)] \le 2^{\sum_{v \mid 2}
    (r_v-1-\dim_{\F_2}(\Vlocal_v))}.$
\end{itemize}
\label{thm:bounds}
\end{thm}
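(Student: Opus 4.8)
The plan is to bound each index separately by producing, for each of the two quotients, an injection into a space whose dimension I can compute directly.

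\textbf{First index $[\tilde C(\cC):C_*(\cC)]$.}
The difference between $\tilde C(\cC)$ and $C_*(\cC)$ is local at the primes over $2$: both groups impose the even-valuation/unramified-away-from-$2$ condition, the norm-square condition, and the same archimedean conditions, but $C_*(\cC)$ additionally requires that the class be integral (a unit up to squares) at the places dividing $2$, which $\tilde C(\cC)$ does not. Concretely, an element $[\alpha]\in\tilde C(\cC)$ has even valuation at every finite place of $A_K$, so away from $2$ it is automatically a square times a unit; the only obstruction to membership in $C_*(\cC)$ lives in the local unit groups modulo squares at the places $v\mid 2$. First I would define, for each $v\mid 2$, the natural localization-and-quotient map sending $[\alpha]$ to the class of $\mathrm{loc}_v([\alpha])$ in the quotient $(A_{K_v}^\times/(A_{K_v}^\times)^2)_\square \big/ \HKv[\text{integral part}]$, i.e. into $(A_{K_v}^\times/(A_{K_v}^\times)^2)_\square \big/ \mathrm{im}\bigl(\HO[v]\bigr)$. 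The kernel of the product of these maps is exactly $C_*(\cC)$ by the definitions, so $[\tilde C(\cC):C_*(\cC)]$ is at most the product over $v\mid 2$ of the orders of these local quotients. By Lemma~\ref{lemma:orders}(2) and (3) the codimension of the integral part inside the full local group is $d_2\cdot g=g[K_v:\Q_2]$ at each $v\mid 2$, and since $\sum_{v\mid 2}[K_v:\Q_2]=[K:\Q]$ this gives the bound $2^{g[K:\Q]}$.

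\textbf{Second index $[C_*(\cC):C_W(\cC)]$.}
Here the two groups agree on every condition except that $C_W(\cC)$ demands, at each $v\mid 2$, that $\mathrm{loc}_v([\alpha])$ lie in the image of $W_v$, whereas $C_*(\cC)$ only knows $\mathrm{loc}_v([\alpha])$ is integral. The point is that for $[\alpha]\in C_*(\cC)$ the local class is not merely integral but automatically lands in $U_{4,v}$: being unramified at $v$ forces $\alpha$ to be congruent to a square modulo $4A_{\cO_v}$ (an unramified quadratic extension of the local algebra is cut out by such a class, cf. the description of $U_4$), and the global norm-square condition localizes to $\cN(\alpha)=\square$. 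So the obstruction to lying in $C_W(\cC)$ is measured by the composite of localization with the projection $U_{4,v}/(A_{\cO_v}^\times)^2 \to U_{4,v}/W_v$. Taking the product over $v\mid 2$, the kernel is $C_W(\cC)$, hence $[C_*(\cC):C_W(\cC)]\le \prod_{v\mid 2}[U_{4,v}:W_v]$. By Theorem~\ref{thm:imageat2} each local index is $2^{r_v-1-\dim_{\F_2}(\Vlocal_v)}$, giving the claimed exponent $\sum_{v\mid 2}(r_v-1-\dim_{\F_2}(\Vlocal_v))$.

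\textbf{Main obstacle.}
The routine parts are the dimension counts, which are handed to us by Lemma~\ref{lemma:orders} and Theorem~\ref{thm:imageat2}. The step requiring genuine care is justifying that membership in $C_*(\cC)$ really does place the local class inside $U_{4,v}$ (not just inside the integral units modulo squares), so that the projection to $U_{4,v}/W_v$ is defined and its kernel is precisely $C_W(\cC)$. This is exactly the translation between ``unramified at $v$'' and ``$\equiv\square\pmod{4A_{\cO_v}}$'' together with the compatibility of the global norm condition with the local one defining $U_{4,v}$; I would verify this place by place using the local description of quadratic extensions and Definition~\ref{def:U4}. Once that identification is in hand, both bounds are immediate from the cited local results, and the asserted inequalities follow.
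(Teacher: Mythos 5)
Your second bullet is handled correctly and in essentially the same way as the paper: $C_W(\cC)$ is cut out inside $C_*(\cC)$ by the condition at each $v\mid 2$ of landing in the image of $W_v$, membership in $C_*(\cC)$ already places the local class in $U_{4,v}$ (unramified at residue characteristic $2$ translates to $\equiv\square\pmod{4A_{\cO_v}}$, and the norm condition localizes), so the index is at most $\prod_{v\mid 2}[U_{4,v}:W_v]$, which Theorem~\ref{thm:imageat2} computes. Nothing to change there.

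The first bullet, however, has a genuine gap, beginning with a mischaracterization of the two groups. You assert that $C_*(\cC)$ differs from $\tilde{C}(\cC)$ by additionally requiring integrality at the places over $2$; but $\tilde{C}(\cC)$ already imposes even $w$-adic valuation at \emph{every} finite place $w$ of $A_K$, including those over $2$, so every element of $\tilde{C}(\cC)$ is already a unit up to squares locally at each $v\mid 2$. Consequently the map you propose into $\prod_{v\mid 2}(A_{K_v}^\times/(A_{K_v}^\times)^2)_\square\,/\,(A_{\cO_v}^\times/(A_{\cO_v}^\times)^2)_\square$ kills all of $\tilde{C}(\cC)$: its kernel is $\tilde{C}(\cC)$ itself, not $C_*(\cC)$, and no bound comes out. (Your dimension count is also off: by Lemma~\ref{lemma:orders} the codimension of $(A_{\cO_v}^\times/(A_{\cO_v}^\times)^2)_\square$ inside $(A_{K_v}^\times/(A_{K_v}^\times)^2)_\square$ is $r_v-1$, not $d_2\cdot g$.) The true difference between the two groups is that $C_*(\cC)$ demands $A_K(\sqrt{\alpha})$ be \emph{unramified} at the places over $2$, which in residue characteristic $2$ is strictly stronger than integrality; locally this is the condition of lying in $U_{4,v}$ of Definition~\ref{def:U4}. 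If you repair your argument accordingly, the local quotient to control becomes $(A_{\cO_v}^\times/(A_{\cO_v}^\times)^2)_\square/U_{4,v}$, whose $\F_2$-dimension is $(r_v-1+2d_2g)-(r_v-1)=2d_2g$ by Lemmas~\ref{lemma:orders} and~\ref{part:U4cardinality}; summing over $v\mid 2$ a purely local count only yields $[\tilde{C}(\cC):C_*(\cC)]\le 2^{2g[K:\Q]}$, twice the claimed exponent. Recovering the factor $2^{g[K:\Q]}$ is precisely where the paper must argue globally: it maps $\tilde{C}(\cC)$ to $\Cl(A_K)[2]$ by $[\alpha]\mapsto[I]$ with $I^2=(\alpha)$, and bounds the kernel using Dirichlet's unit theorem (Lemma~\ref{lemma:orderHO}) together with the bookkeeping of archimedean signs through the groups $\widetilde{V}$ and $\widetilde{W}$. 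A local analysis at $v\mid 2$ alone cannot produce the stated bound.
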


\begin{proof} The second claim is clear, since by definition,
  $C_W(\cC)$ consists of the elements of $C_*(\cC)$ that locally at
  places $v$ dividing $2$ lie in $W_v$, hence the index is bounded by the
  product of the local indexes which were computed in
  Theorem~\ref{thm:imageat2}.

  The proof of the first claim follows the same idea used in the proof
  of \cite[Theorem 2.11]{2001.02263}. There is a natural well defined
  map $\phi:\tilde{C}(\cC) \to \Cl(A_K)[2]$ given as follows: if
  $\alpha \in A_K^\times$ such that $[\alpha] \in \tilde{C}(\cC)$ then
  the even valuation condition implies the existence of an ideal $I$
  such that $I^2 = (\alpha)$. Define $\phi([\alpha]) = [I]$; the map is well defined by \cite[Lemma 2.13]{2001.02263}. Equation
  (2.1) of \cite{2001.02263} implies that
  \[
[\tilde{C}(\cC):C_*(E)] \le \frac{\# \ker \phi}{\#(P/P_*(\cC))}.
    \]
  For each odd value $1 \le i \le d$, let
  \[
    \cA_i = \{v \text{ real Archimedean places of }K \; : p(x) \text{
      has }i \text{ real roots in }K_v\}.
  \]
  Let $a_i = \# \cA_i$ and $c$ denote the number of complex places of
  $K$, so $[K:\Q] = a_1 + a_3 + a_5 + \cdots + a_d + 2c$. The sign map
  \[
    \sign: A_K^\times \to \prod_{v \in \cA_1} \{\pm 1\} \times \cdots
    \times \prod_{v \in \cA_d}\{\pm 1\}^d,
  \]
  induces a well defined map on $A_K^\times /(A_K^\times)^2$. Let $W_i \subset \{\pm 1\}^{a_i}$ be the subset of elements whose product equals $1$ (a subgroup of index two) and let
  \[
    \widetilde{W} = \prod_{v \in \cA_1}W_1 \times \ldots \times \prod_{v \in \cA_d}W_d.
  \]
  Let $V_i$ be the subset of $W_i$ given by 
  \[
V_i = \left\{\left(1,\epsilon_1,\epsilon_1,\ldots,\epsilon_{\frac{i-1}{2}},\epsilon_{\frac{i-1}{2}}\right) \; : \; \epsilon_j = \pm 1\right\},
\]
and 
  \[
    \widetilde{V} = \prod_{v \in \cA_1}V_1 \times \ldots \times \prod_{v \in \cA_d}V_d.
  \]
  Clearly $\sign(\HK) \subset \widetilde{W}$ and
  $\sign(\tilde{C}(\cC))\subset \widetilde{V}$. The rest of the argument
  given in the proof of \cite[Theorem 2.11]{2001.02263} works mutatis
  mutandis with this definitions, and we obtain that
  \begin{equation}
    \label{eq:bound}
[\tilde{C}(\cC):C_*(\cC)] \le 
\frac{\#\widetilde{V}\,\#\sign(\cO^\times)\,\# \HO}{\#\sign(A_K^\times)}.    
  \end{equation}
  The values appearing in the previous formula are the following:
  \begin{itemize}
  \item $\# \widetilde{V}=2^{a_3+2a_5+3a_7+\cdots +(\frac{d-1}{2})a_d}$,
    
  \item  $\# \sign(\cO^\times) = 2^{a_1+a_3+a_5+\cdots+a_d}$,
  \item  $\# \sign(A_K^\times) = 2^{a_1+3a_3+5a_5+\ldots + da_d}$,
  \item $\# \HO = 2^{g[K:\Q]} \cdot
      2^{a_3+2a_5+3a_7+\cdots +(\frac{d-1}{2})a_d}$
      by Lemma~\ref{lemma:orderHO}.
  \end{itemize}
  But
  \begin{multline*}
      (a_3+2a_5+\cdots +{\scriptstyle(\frac{d-1}{2})}a_d)
  + (a_1+a_3+a_5\cdots+a_d)
  + (a_3+2a_5+\cdots +{\scriptstyle(\frac{d-1}{2})}a_d)
  \\
  = a_1+3a_3+5a_5+\cdots+da_d
  \end{multline*}
  so the right hand side of~(\ref{eq:bound}) equals
  $2^{g[K:\Q]}$.
\end{proof}

\begin{rem} The oddness hypothesis on the class group of $K$ is only
  used in the last theorem. A general result could be obtained by
  consider a more general class group as done in \cite{MR4496098}
  (for the case of elliptic curves).
\end{rem}

\begin{lemma}
  With the previous notation,
  \[
    \# \HO = 2^{g[K:\Q]} \cdot
      2^{a_3+2a_5+3a_7+\cdots +(\frac{d-1}{2})a_d}
    \]
  \label{lemma:orderHO}
\end{lemma}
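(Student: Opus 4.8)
The plan is to realize $\HO$ as the kernel of the norm map on units modulo squares, compute its cardinality as an index of two finite $\F_2$-vector spaces, and then evaluate the two resulting dimensions by Dirichlet's unit theorem combined with a careful count of the archimedean places of $A_K$. First I would recall that, by definition, $\HO$ is the kernel of the map $\cN\colon A_\cO^\times/(A_\cO^\times)^2 \to \cO^\times/(\cO^\times)^2$ induced by the norm; for a unit $u\in A_\cO^\times$ the element $\cN(u)$ lies in $\cO^\times$, and since $\cO$ is integrally closed $\cN(u)$ is a square in $K^\times$ if and only if it is a square in $\cO^\times$, so the two readings of the subscript $\square$ agree. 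Next I would show this map is surjective: writing $d=\sum_{i=1}^r d_i$ with $d_i=[K_i:K]$ and recalling that $d$ is odd, at least one $d_i$ is odd, and for such an $i$ and any $u\in\cO^\times$, viewing $u$ inside $\cO_i^\times$ gives $N_{K_i/K}(u)=u^{d_i}\equiv u \pmod{(\cO^\times)^2}$, so every class is hit. This is the global analogue of the argument proving Lemma~\ref{lemma:orders}(3). It follows that
\[
  \#\HO = \frac{\#\bigl(A_\cO^\times/(A_\cO^\times)^2\bigr)}{\#\bigl(\cO^\times/(\cO^\times)^2\bigr)}.
\]

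To compute the two dimensions, I would apply Dirichlet's unit theorem: $\cO_i^\times\cong \mu_{K_i}\times\Z^{r_i+c_i-1}$, where $r_i$ and $c_i$ count the real and complex places of $K_i$; since $-1\in\mu_{K_i}$ one gets $\dim_{\F_2}\cO_i^\times/(\cO_i^\times)^2 = r_i+c_i$. Summing over $i$ yields $\dim_{\F_2}A_\cO^\times/(A_\cO^\times)^2=\sum_i(r_i+c_i)$, while $\dim_{\F_2}\cO^\times/(\cO^\times)^2=r_K+c_K$ with $r_K,c_K$ the real and complex places of $K$.

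The main obstacle is the combinatorial bookkeeping expressing $\sum_i(r_i+c_i)$ through the data $a_i$ and $c_K$. I would compare the two decompositions of $A_K\otimes_\Q\R$: on one hand $\prod_i(\R^{r_i}\times\C^{c_i})$, and on the other $\prod_{v\mid\infty}A_{K_v}$, where a real place $v$ whose completion sees $m_v$ real roots of $p$ (so $v\in\cA_{m_v}$) contributes $A_{K_v}\cong\R^{m_v}\times\C^{(d-m_v)/2}$, and a complex place contributes $A_{K_v}\cong\C^d$. Matching the real and the complex factors across the two decompositions gives $\sum_i r_i=a_1+3a_3+\cdots+d\,a_d$ and $\sum_i c_i=\sum_i\tfrac{d-i}{2}a_i+c_K\,d$, so that $\dim_{\F_2}A_\cO^\times/(A_\cO^\times)^2=\sum_i\tfrac{d+i}{2}a_i+c_K\,d$. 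The one point requiring genuine care is tracking which factors of each $A_{K_v}$ are real and which are complex when passing between the two descriptions; everything else is routine.

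Finally I would subtract $r_K+c_K=(a_1+a_3+\cdots+a_d)+c_K$ and simplify. Using $g=\tfrac{d-1}{2}$ and $r_K+2c_K=[K:\Q]$, the coefficient of each $a_i$ becomes $\tfrac{d+i}{2}-1=g+\tfrac{i-1}{2}$ and the coefficient of $c_K$ becomes $2g$, so collecting terms yields
\[
  \dim_{\F_2}\HO = g\,(r_K+2c_K) + \sum_{i}\tfrac{i-1}{2}\,a_i
  = g[K:\Q] + \bigl(a_3+2a_5+\cdots+\tfrac{d-1}{2}a_d\bigr),
\]
which is exactly the claimed value of $\#\HO$.
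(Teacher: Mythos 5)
Your proposal is correct and follows essentially the same route as the paper: realize $\HO$ as the kernel of the (surjective, since $d$ is odd) norm map on units modulo squares, apply Dirichlet's unit theorem to each factor, and count the archimedean places of $A_K$ lying over real and complex places of $K$ via the local splittings $A_{K_v}\cong\R^{m_v}\times\C^{(d-m_v)/2}$. The only cosmetic differences are that you prove surjectivity using a single component $K_i$ of odd degree rather than the diagonal embedding of degree $d$, and you organize the place count by comparing the two decompositions of $A_K\otimes_\Q\R$ instead of summing $r_i(v)+s_i(v)$ over places $v$ of $K$; the bookkeeping and the final arithmetic are identical.
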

\begin{proof} 
  By definition, $\HO$ is the kernel of the norm map
  $\cN: A_\cO^\times/(A_\cO^\times)^2 \to
  \cO^\times/(\cO^\times)^2$. The map is surjective (since $[A_K:K]$ is
  odd, in particular, if $\epsilon \in \cO^\times$, its norm is equal
  to itself up to a square). Thus
\[
\#\HO = 
\frac{\#A_\cO^\times/(A_\cO^\times)^2}
{\# \cO^\times/(\cO^\times)^2}
\,.
\]

  By Dirichlet's unit theorem we have
  $\# \cO^\times/(\cO^\times)^2=2^\alpha$ where $\alpha$ is the number
  of archimedean places of $K$, i.e., $\alpha={a_1+a_3+\cdots+a_d+c}$.
  As before,
  write $A_K \simeq K_1 \times \cdots \times K_r$.
  Given an archimedean
  place $v$ of $K$, let $r_i(v)$ and $s_i(v)$ denote the number of
  real and complex places of $K_i$ above $v$,
  so $[K_i:K]=r_i(v)+2s_i(v)$ for real $v$ and $[K_i:K]=s_i(v)$ for
  complex $v$.
  We can apply Dirichlet's unit theorem to each $K_i$ to obtain
  \[
\#A_\cO^\times/(A_\cO^\times)^2=
2^{\sum_{\text{$v$ real}} \sum_{i=1}^r (r_i(v)+s_i(v))
 + \sum_{\text{$v$ complex}} \sum_{i=1}^r s_i(v)}. 
\]
If $v$ is complex we have $\sum_{i=1}^r s_i(v) = d$, so the second
term in the exponent is
\[
    \beta=\sum_{v \text{ complex}} \sum_{i=1}^rs_i(v) = cd
    = \frac{d-1}2 (2c) + c
    \,.
\]
For $v$ real we have $v\in\cA_j$ for some $j$ and 
in that case
$\sum_{i=1}^r r_i(v) = j$, while
$\sum_{i=1}^r s_i(v) = \frac{d-j}{2}$.
Hence the first term in the exponent is
\begin{align*}
  \gamma=\sum_{\text{$v$ real}} \sum_{i=1}^r (r_i(v)+s_i(v))
  & =\sum_{j=1}^d \left(\frac{d+j}{2}\right)a_j \\
  & = \frac{d-1}{2} \sum_{j=1}^d a_j
  + (a_1+2a_3+3a_5+\dotsc+{\scriptstyle(\frac{d+1}2)}a_d)
  \,.
\end{align*}
Adding both terms, and using $g=\frac{d-1}2$ and
$[K:\Q]=a_1+a_2+\dotsc+a_d+2c$
we obtain
$\gamma+\beta-\alpha=g[K:\Q] +
(a_3+2a_5+\dotsc+{\scriptstyle(\frac{d-1}2)a_d})$
proving the claim.
\end{proof}

Combining all the previous results, we can now prove our main result. 
\begin{thm}
\label{thm:main}
Let $K$ be a number field and $\cC/K$ be a hyperelliptic
curve. Suppose that hypotheses~\ref{hyp:hyp} hold, then
  \begin{multline}
  \dim_{\F_2}\Cl_*(A_K,\cC)[2]
  - \sum_{v \mid 2} \bigl(r_v - 1 - \dim_{\F_2}(\Vlocal_v)\bigr)
  \quad \le
  \\
  \le \quad \dim_{\F_2}\Sel_2(J)
  \quad \le \quad \dim_{\F_2}\Cl_*(A_K,\cC)[2] +
  g\,[K:\Q]\,.
 \end{multline}
\end{thm}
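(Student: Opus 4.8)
The plan is to assemble the inclusions and index estimates established in the preceding results, exploiting the fact that every group in sight is an $\F_2$-vector space, so that indices translate directly into dimension differences via $\log_2$. The backbone is the chain of inclusions $C_W(\cC) \subset C_*(\cC) \subset \tilde{C}(\cC)$ already noted above, together with the sandwich $C_W(\cC) \subset \Sel_2(J) \subset \tilde{C}(\cC)$ furnished by Propositions~\ref{prop:lowerbound} and~\ref{prop:upperbound} (both of which invoke hypotheses~\ref{hyp:hyp}). The first step I would record is the identification $\dim_{\F_2} C_*(\cC) = \dim_{\F_2} \Cl_*(A_K,\cC)[2]$ supplied by Proposition~\ref{prop:C1asclassgroup}, which reduces both inequalities to comparing $\dim_{\F_2}\Sel_2(J)$ with $\dim_{\F_2} C_*(\cC)$.

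For the upper bound, I would combine $\Sel_2(J) \subset \tilde{C}(\cC)$ with the first estimate of Theorem~\ref{thm:bounds}, namely $[\tilde{C}(\cC):C_*(\cC)] \le 2^{g[K:\Q]}$. Since both spaces are $\F_2$-vector spaces, one has $\dim_{\F_2}\tilde{C}(\cC) = \dim_{\F_2} C_*(\cC) + \log_2 [\tilde{C}(\cC):C_*(\cC)] \le \dim_{\F_2} C_*(\cC) + g[K:\Q]$, and substituting the class-group identification yields $\dim_{\F_2}\Sel_2(J) \le \dim_{\F_2}\Cl_*(A_K,\cC)[2] + g[K:\Q]$.

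For the lower bound, I would combine $C_W(\cC) \subset \Sel_2(J)$ with the second estimate of Theorem~\ref{thm:bounds}, namely $[C_*(\cC):C_W(\cC)] \le 2^{\sum_{v \mid 2}(r_v - 1 - \dim_{\F_2}(\Vlocal_v))}$. The same dimension count gives $\dim_{\F_2} C_W(\cC) = \dim_{\F_2} C_*(\cC) - \log_2[C_*(\cC):C_W(\cC)] \ge \dim_{\F_2} C_*(\cC) - \sum_{v \mid 2}(r_v - 1 - \dim_{\F_2}(\Vlocal_v))$, whence $\dim_{\F_2}\Sel_2(J) \ge \dim_{\F_2}\Cl_*(A_K,\cC)[2] - \sum_{v \mid 2}(r_v - 1 - \dim_{\F_2}(\Vlocal_v))$.

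I do not expect any genuine obstacle at this stage: the substantive content lives entirely in the local integrality statement (Theorem~\ref{thm:integrality}), in the construction of local points at the places over $2$ (Theorem~\ref{thm:imageat2}), and in the class-group comparison (Theorem~\ref{thm:bounds}). The only point requiring minimal care is to confirm that all four groups $C_W(\cC)$, $\Sel_2(J)$, $C_*(\cC)$, $\tilde{C}(\cC)$ are honest $\F_2$-vector spaces, so that the passage from indices to differences of dimensions is legitimate; this is immediate, since each is a subquotient of $A_K^\times/(A_K^\times)^2$.
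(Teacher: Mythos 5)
Your proposal is correct and follows essentially the same route as the paper's own proof: it sandwiches $\Sel_2(J)$ between $C_W(\cC)$ and $\tilde{C}(\cC)$ via Propositions~\ref{prop:lowerbound} and~\ref{prop:upperbound}, converts the index bounds of Theorem~\ref{thm:bounds} into dimension estimates, and identifies $C_*(\cC)$ with $\Cl_*(A_K,\cC)[2]$ using Proposition~\ref{prop:C1asclassgroup}. The only difference is cosmetic: you phrase the bookkeeping in terms of $\F_2$-dimensions and $\log_2$ of indices, whereas the paper works with cardinalities, but the content is identical.
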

\begin{proof}
  By Proposition~\ref{prop:upperbound} we have
  $\Sel_2(J) \subset \tilde{C}(\cC)$, hence
  \[
   \#\Sel_2(J) \le \# \tilde{C}(\cC) =[\tilde{C}(\cC) : C_{\ast}(\cC)]\cdot  \#C_{\ast}(\cC).
 \]
 Theorem~\ref{thm:bounds} gives the bound
 $[\tilde{C}(\cC) : C_{\ast}(\cC)] \le 2^{g[K:\Q]}$ and
 Proposition~\ref{prop:C1asclassgroup} implies that
 $C_*(\cC) \simeq \Cl_*(A_K,\cC)[2]$, proving the upper bound
 $$\dim_{\F_2}\Sel_2(J) \le \dim_{\F_2}\Cl_*(A_K,\cC)[2] + g\,[K:\Q].$$

  Similarly, by Proposition~\ref{prop:lowerbound} we have
  $C_W(\cC) \subset \Sel_2(J)$, and the lower bound follows from Theorem~\ref{thm:bounds}.
\end{proof}

\section{Applications}
\label{section:applications}
\subsection{Quadratic twists} The goal of the present section is to
study the rank variation of families of quadratic twists of a given
hyperelliptic curve. For that purpose, let
$K$ be a number field whose narrow class number is odd, and let
$p(x)\in K[x]$ be an irreducible polynomial of odd degree (we cannot
remove the irreducibility hypothesis as will become clear later). Let
$\cC$ be the hyperelliptic curve with equation
\[
\cC : y^2 = p(x)
  \]
  \begin{dfn}
    The \emph{quadratic twist} of the curve $\cC$ by $a\in K^\times$
    is the curve defined by the equation
    \begin{equation}
      \label{eq:twist}
    \cC(a): ay^2= p(x).      
    \end{equation}
  \end{dfn}
A change of variables transforms \ref{eq:twist} into the more
well known equation
$$\cC(a): y^2= a^d p(x/a).$$
Let $J_a$ denote the Jacobian of $\cC(a)$. Note that since $p(x)$ is irreducible, $A_K$ is a number field.

\begin{lemma} Let $\cC$ be an hyperelliptic curve over $K$ whose
  defining polynomial has odd degree and is irreducible. Let
  $a\in K^\times$ be an element satisfying that any prime ideal
  $\id{p} \mid a$ is either inert or totally ramified in the field
  extension $A_K/K$. Then, if $\cC$ satisfies hypotheses
  ~\ref{hyp:hyp} so does $\cC(a)$.
\label{lemma:twist}
\end{lemma}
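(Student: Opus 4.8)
The plan is to check the three conditions of hypotheses~\ref{hyp:hyp} for $\cC(a)$ in turn. Since the twist depends only on the class of $a$ in $K^\times/(K^\times)^2$, I take $a$ to be squarefree and integral; the hypothesis that every prime dividing $a$ is inert or totally ramified in $A_K/K$ is inherited by such a representative, whose prime divisors form a subset of those of the original $a$. Writing the twist as $\cC(a): y^2 = p_a(x)$ with $p_a(x) = a^d\,p(x/a)$, the polynomial $p_a$ is monic of the same odd degree $d$, so condition (1) holds, while condition (2) depends only on $K$ and is untouched. Everything thus reduces to verifying that the model $y^2=p_a(x)$ satisfies (\dag) at each finite place $v$ of $K$. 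Throughout I use that the roots of $p_a$ are $a$ times those of $p$, so the substitution $x\mapsto ax$ identifies the factorizations of $p$ and $p_a$ over any completion $K_v$; in particular they have the same number $r_v$ of irreducible factors there.

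First I would dispose of the places $v\mid a$, i.e. $v(a)=1$. By hypothesis $\mathfrak{p}_v$ is inert or totally ramified in $A_K/K$; as $p$ is irreducible (so $A_K$ is a field), in either case there is a unique prime of $A_K$ above $\mathfrak{p}_v$. Since the number of such primes equals the number of irreducible factors of $p$ over $K_v$, this forces $r_v=1$, i.e. $p$ is irreducible over $K_v$; because $x\mapsto ax$ is a $K_v$-linear automorphism of $K_v[x]$, the same holds for $p_a$. Irreducibility over $K_v$ is one of the instances of (\dag.i) recorded after Remark~\ref{remark:chinese}, so $\cC(a)/K_v$ satisfies (\dag.i). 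This argument is insensitive to the residue characteristic, so it also covers any place over $2$ dividing $a$.

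It remains to treat $v\nmid a$, where $a\in\cO_v^\times$. Then the unit scaling $x\mapsto ax$ is an $\cO_v$-automorphism of $\cO_v[x]$ sending $(p_a)$ to $(p)$ and matching up the Chinese remainder decompositions of Remark~\ref{remark:chinese}, whence $\cO_v[x]/(p_a)\cong\cO_v[x]/(p)$ and (\dag.i) holds for $\cC(a)$ at $v$ precisely when it holds for $\cC$. If instead (\dag.i) fails for $\cC$ at $v$, so that $\cC$ satisfies (\dag) through (\dag.ii), then the residue characteristic is odd (and unchanged) and I must show the component group of $J_a$ at $v$ is again of odd order. Because $a$ is a unit at an odd place, $K_v(\sqrt a)/K_v$ is unramified, so $J$ and $J_a$ acquire the same geometric component group $\Phi$ over $K_v^{\mathrm{ur}}$, and the twisting cocycle $\mathrm{Frob}_v\mapsto[-1]$ acts on the abelian group $\Phi$ as inversion. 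Hence the Frobenius $F_a$ governing $\Phi$ for $J_a$ satisfies $F_a=-F$, where $F$ is the Frobenius for $J$.

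The only substantive point is this comparison of component orders, which I expect to be the main obstacle. On the $2$-primary part $\Phi_2$ of $\Phi$ one has $\ker(F_a-1)=\ker(F+1)$, so I must show that $F+1$ is injective on $\Phi_2$ whenever $F-1$ is. Reducing modulo $2$, the endomorphisms $F-1$ and $F+1$ induce the same map on the $\F_2$-vector space $\Phi_2/2\Phi_2$ (since $-1\equiv 1 \pmod 2$), and an endomorphism of a finite abelian $2$-group is bijective if and only if its reduction mod $2$ is; thus $F-1$ and $F+1$ are simultaneously injective on $\Phi_2$. As oddness of $[J(K_v):J^0(K_v)]$ is exactly the injectivity of $F-1$ on $\Phi_2$, I obtain the injectivity of $F+1$, i.e. $[J_a(K_v):J_a^0(K_v)]$ is odd, so $\cC(a)/K_v$ satisfies (\dag.ii). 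With every place reduced either to the irreducibility observation or to the unit substitution, this completes the verification.
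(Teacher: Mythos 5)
Your proof is correct, and for the (\dag.i) cases it follows essentially the same route as the paper: at places dividing $a$ you observe that the inert/totally-ramified hypothesis forces $p$ (hence $p_a$) to be irreducible over $K_v$, and at places prime to $a$ you use the $\cO_v$-algebra automorphism $x\mapsto ax$ to identify $\cO_v[x]/(p_a)$ with $\cO_v[x]/(p)$, where the paper instead compares discriminants --- two equivalent ways of making the same point. The genuine difference is the (\dag.ii) case. The paper disposes of it in one line, asserting that the component group does not vary over unramified extensions; that statement concerns the geometric component group $\Phi(\bar k)$, whereas the quantity in (\dag.ii) is $\#\Phi(k)=\#\ker(F-1)$, and the quadratic twist changes the Frobenius action on $\Phi$ by $-1$, so an additional argument is needed. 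Your proof supplies exactly that step: $F_a=-F$, the endomorphisms $F-1$ and $F+1$ agree on $\Phi_2/2\Phi_2$, and an endomorphism of a finite abelian $2$-group is bijective if and only if its reduction mod $2$ is, whence the $2$-part of $\ker(F-1)$ vanishes if and only if that of $\ker(F_a-1)$ does. This is more careful than the paper's treatment and, in my view, is what actually makes that bullet point rigorous. One small quibble: your opening reduction to a squarefree integral representative of $a$ supported on the primes of the original $a$ is not always achievable when $\Cl(K)$ is nontrivial; but it is also unnecessary, since your case analysis goes through verbatim with the two cases $v(a)>0$ (where the inert/ramified hypothesis still gives $r_v=1$ and hence irreducibility) and $v(a)=0$.
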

\begin{proof} 
  The first two assumptions of Hypothesis~\ref{hyp:hyp} are clearly
  satisfied, so we need only to to verify the last one, namely that
  for all finite places $v$ of $K$, $\cC(a)/K_v$ satisfies (\dag).
  Let $v$ be finite place of $K$. 
  \begin{itemize}
  \item If $p(x)$ is irreducible over $K_v$ then clearly
    $a^d p(x/a)$ is also irreducible, as both polynomials
    define the same $K_v$-algebra $A_{K_v}$, so $\cC(a)$ also satisfies $(\dag.i)$.
  \item Suppose that $p(x)$ satisfies (\dag.i)
      but it's not irreducible over $K_v$.
    The inclusion
    $\cO[x]/(p(x)) \subset \prod_i \cO[x]/(p_i(x))$
    is an equality if and only if
    both rings have the same discriminant. Our hypothesis on $a$
    implies that $v \nmid a$, so the
    discriminants of $p(x)$ and $a^d p(x/a)$ differ by a unit in
    $\cO_{K_v}$, and similarly for $p_i(x)$.
    Hence $\cC(a)$ also satisfies (\dag.i) over $K_v$.
    
  \item Finally, suppose that $p(x)$ satisfies (\dag.ii) over
    $K_v$ but does not satisfy (\dag.i). Our hypotheses on $a$
    imply that $v \nmid a$, so the extension $K(\sqrt{a})/K$ is
    unramified. Note that the curves $\cC$ and $\cC(a)$ become
    isomorphic over such an extension. It is a well known fact that
    the component group of the Jacobian of a curve does not vary over
    unramified extensions.
 \end{itemize}
\end{proof}

\begin{thm} Let $\cC$ be an hyperelliptic curve satisfying
  hypotheses~\ref{hyp:hyp} over a number field $K$ of odd narrow
  class number. Suppose that $p(x)$ is irreducible, and suppose
  furthermore that there is a principal prime ideal of $K$ which is
  inert in $A_K/K$.
  Then among all quadratic
  twists by principal prime ideals, there exists a subset of positive
  density ${\mathscr S}$ such that the abelian varieties $\Jac(\cC(a))$
  have the same $2$-Selmer group for all $a\in {\mathscr S}$.
\label{thm:twist}
\end{thm}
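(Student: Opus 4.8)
The plan is to realise all the Selmer groups as subgroups of one fixed ambient space and to show that, once we restrict to a suitable positive-density family of principal inert primes, the local conditions defining them cease to depend on the twisting prime. First I would record that the étale algebra attached to $\cC(a)$ is canonically isomorphic to $A_K=K[x]/(p(x))$: the defining polynomial $a^d p(x/a)$ has roots $a\alpha_i$, so $K[x]/(a^dp(x/a))\simeq A_K$. Hence by Theorem~\ref{thm:kummermap} all the groups $\coho^1(\Gal_K,J_a[2])$ are identified with the fixed space $V:=(A_K^\times/(A_K^\times)^2)_{\square}$, and $\Sel_2(J_a)\subset V$ is the subgroup of classes $[\alpha]$ with $\mathrm{loc}_v([\alpha])\in\im(\delta_v^{(a)})$ at every place $v$. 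The problem thus reduces to controlling the local images $\im(\delta_v^{(\mathfrak{q})})$ as $a=\mathfrak{q}$ ranges over principal primes of odd residual characteristic that are inert in $A_K/K$; by Lemma~\ref{lemma:twist} each such $\cC(\mathfrak{q})$ satisfies Hypotheses~\ref{hyp:hyp}, so the entire machinery of Section~\ref{section:local} applies.

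Next I would analyse the local conditions away from $\mathfrak{q}$. For every finite place $v\nmid 2\mathfrak{q}$ the curve $\cC(\mathfrak{q})$ satisfies (\dag) and has odd residual characteristic, so Corollary~\ref{coro:index} gives $\im(\delta_v^{(\mathfrak{q})})=\HO$, the subgroup of unramified (even-valuation) classes, which is manifestly independent of $\mathfrak{q}$. At the finitely many places $v\mid2$ I would impose that $\mathfrak{q}$ be a square in $K_v^\times$, so that $\cC(\mathfrak{q})\cong\cC$ over $K_v$ and $\im(\delta_v^{(\mathfrak{q})})$ coincides with that of the reference curve; using that the narrow class number of $K$ is odd, every principal ideal has a totally positive generator, and choosing it makes the archimedean images equal to those of $\cC$ by Lemma~\ref{lemma:arqimage}. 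Thus, on the family cut out by these finitely many Frobenius-type congruence conditions, the local conditions at all $v\neq\mathfrak{q}$ are fixed.

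The decisive computation is at $v=\mathfrak{q}$. Since $\mathfrak{q}$ is inert in $A_K/K$, the completion $A_{K_{\mathfrak{q}}}$ is the unramified degree-$d$ field extension of $K_{\mathfrak{q}}$, so the rescaled defining polynomial stays irreducible over $K_{\mathfrak{q}}$ and (\dag.i) holds with $r_{\mathfrak{q}}=1$ and $d_2=0$. Lemma~\ref{lemma:orders}(1) then forces $\dim_{\F_2}J_{\mathfrak{q}}(K_{\mathfrak{q}})/2J_{\mathfrak{q}}(K_{\mathfrak{q}})=r_{\mathfrak{q}}-1+d_2\,g=0$, whence $\im(\delta_{\mathfrak{q}}^{(\mathfrak{q})})=\{1\}$; the condition at $\mathfrak{q}$ is therefore simply that $\alpha$ be a square in $A_{K_{\mathfrak{q}}}^\times$. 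As a square has even valuation at the prime $\mathfrak{Q}$ of $A_K$ above $\mathfrak{q}$, every Selmer class is then unramified at all finite places, so it lies in the fixed finite group $G_0\subset V$ of classes unramified at all finite places and satisfying the now-fixed conditions at $2$ and $\infty$; conversely such a class lies in $\Sel_2$ if and only if its reduction is a square in $k_{\mathfrak{Q}}^\times$ (by Hensel, the residual characteristic being odd). Hence $\Sel_2(J_{\mathfrak{q}})=\{[\alpha]\in G_0:\overline{\alpha}\in(k_{\mathfrak{Q}}^\times)^2\}$.

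Finally I would pin the group down by imposing in addition that $\mathfrak{Q}$ split completely in the fixed finite extension $L_0:=A_K(\sqrt{\alpha}\,:\,[\alpha]\in G_0)$; then every $\alpha\in G_0$ is a square at $\mathfrak{Q}$ and $\Sel_2(J_{\mathfrak{q}})=G_0$ for all $\mathfrak{q}$ in the family. It remains to show that the set $\mathscr{S}$ of principal primes satisfying every imposed constraint—inert in $A_K/K$, a square at each $v\mid2$, with a totally positive generator, and split as above in $L_0/A_K$—has positive density. I would encode these as a union of Frobenius classes in a single Galois extension $M/K$ containing the Galois closure $\widetilde{A_K}$, the narrow Hilbert class field of $K$, $L_0$, and the quadratic ray-class data at $2$ and $\infty$, and then apply the Chebotarev density theorem. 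The hard part is exactly the nonemptiness of the resulting conjugacy (coset) class: one must prove that ``principal and inert in $A_K/K$'' is compatible with the splitting conditions in $L_0$. This is precisely where the hypotheses enter—the oddness of the narrow class number controls the interaction between principality (splitting in the narrow Hilbert class field) and the $2$-extension $L_0$, while the assumed existence of one principal inert prime guarantees that the inert Frobenius class in $\Gal(\widetilde{A_K}/K)$ is attained—so that the target class is non-empty and Chebotarev delivers the desired positive density.
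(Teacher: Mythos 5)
Your route is genuinely different from the paper's, and more ambitious. The paper's proof is soft: for every principal prime $\id{q}$ inert (or totally ramified) in $A_K/K$, Lemma~\ref{lemma:twist} puts $\cC(\id{q})$ under Hypotheses~\ref{hyp:hyp}, so Propositions~\ref{prop:lowerbound} and~\ref{prop:upperbound} sandwich $\Sel_2(\Jac(\cC(a)))$ between the two \emph{fixed} finite groups $C_W(\cC)$ and $\tilde{C}(\cC)$, which do not depend on the twisting prime; since only finitely many $\F_2$-subspaces lie between them, the pigeonhole principle yields a positive-density subfamily with constant Selmer group (without identifying which group occurs). You instead try to pin the Selmer group down to one explicit group $G_0$ by imposing extra splitting conditions on $\id{q}$. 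Your local analysis is sound --- in particular the observation that at an inert $\id{q}$ one has $r_{\id q}=1$, $d_2=0$, so $J_{\id q}(K_{\id q})/2J_{\id q}(K_{\id q})=0$ by Lemma~\ref{lemma:orders} and the only condition at $\id{q}$ is being a local square --- and, \emph{if} your set of primes is nonempty, the conclusion $\Sel_2=G_0$ is correct and stronger than the theorem.

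The gap is precisely the step you label ``the hard part'' and then do not carry out: the nonemptiness of the Frobenius coset cut out by all your conditions simultaneously (principal; inert in $A_K/K$; generator totally positive and a square at each $v\mid 2$; the prime $\id{Q}$ above $\id{q}$ split completely in $L_0/A_K$). The existence of one principal inert prime only shows that ``principal'' and ``inert'' are compatible; it says nothing about the additional conditions, and the splitting condition lives in an extension of $A_K$ rather than of $K$, so it is not an obvious Chebotarev condition over $K$. The missing idea that actually closes this is a parity argument: an inert Frobenius acts as a $d$-cycle with $d$ odd, hence has odd order in $\Gal(\widetilde{A_K}/K)$, while $\Gal(L_0/A_K)$ and the obstructions at $2$ and $\infty$ (modulo principality, using that the narrow class number is odd) are elementary abelian $2$-groups. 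So one should lift the Frobenius $\sigma_0$ of the given principal inert prime to the compositum of all the relevant fields and replace it by $\sigma=\sigma_0^{2^a}$, with $2^a$ the $2$-part of its order: $\sigma$ still restricts to a $d$-cycle and to the identity on the Hilbert class field, but being of odd order it is trivial on every $2$-group quotient, and $\sigma^d$ acts trivially on $L_0$; Chebotarev applied to $\sigma$ then produces your set. Without this (or an equivalent) argument the proof is incomplete at its crux, which the paper's pigeonhole argument deliberately sidesteps.
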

\begin{proof} Let ${\mathscr P}$ denote the set of all principal prime
  ideals of $K$ (they have positive density), and let ${\mathscr S}$
  be those ones which are inert or totally ramified in $A_K/K$. Our
  hypothesis 
  implies that ${\mathscr S}$ has positive density in
  the set of all principal prime ideals (by Tchebotarev's density
  theorem). Lemma~\ref{lemma:twist} implies that if
  $(a) \in {\mathscr S}$ then the twisted curve $\cC(a)$ also satisfies
  hypotheses~\ref{hyp:hyp}, so we can apply our main result (Theorem
  \ref{thm:main}) to $\cC(a)$ and deduce that for each
  $a\in {\mathscr S}$ the $2$-Selmer group of $\cC(a)$ satisfies
  \[
    C_W(\cC) \subset \Sel_2(\Jac(\cC(a)))\subset \tilde{C}(\cC).
    \]
The result follows from the fact that there are finitely many $\F_2$-vector spaces containing $C_W(\cC)$ and contained in $\tilde{C}(\cC)$.
\end{proof}


\subsection{A family of octic twists}
\label{section:family}
Let
\begin{equation}
  \label{eq:defi}
\cC(a):y^2=x(x^4+a).  
\end{equation}
The curve $\cC(a)$ has genus two, and it contains the non-trivial
point $P=(0,0)$ of order two in $\Jac(\cC(a))$. The $2$-Selmer rank of
the surface $\Jac(\cC(a))$ was studied in \cite{MR4231527}.

There are some very interesting facts concerning the surface
$\Jac(\cC(a))$. Note that all such curves are ``twists'' of each other,
in the sense that they all become isomorphic over the field
$\Q(\sqrt[8]{a})$, via the change of variables
$(x,y) \mapsto (\sqrt[8]{a^5}y,\sqrt[4]{a}x)$.

The reason for the existence of such a twist is that the curve
$\cC(a)$ has an automorphism of order $8$ given explicitly by
$(x,y) \mapsto (\zeta_4 x, \zeta_8,y)$, where $\zeta_8$ denotes an
eighth root of unity, and $\zeta_4 = \zeta_8^2$. This implies in
particular that its Jacobian is an abelian surface with complex
multiplication. It is a naturall question whether one can ``find'' for
a fixed value of $a$, the appropiate Hecke character.

\begin{rem}
\label{rem:Hecke-character}
There is a nice implementation in Pari/GP \cite{PARI2} of algebraic
Hecke characters based on the article~\cite{2210.02716}. For example,
if $a=-1$, then one can compute the conductor of $\Jac(\cC(-1))$ in
Magma, and find that it equals $2^{12}$.  Let $E/F$ be a finite
extension of number fields, and let $\chi$ be a continuous character of
$\Gal_E$. Recall the well known formula:
\begin{equation}
  \label{eq:conductor}
  \cond\left(\Ind_{\Gal_E}^{\Gal_F} \chi\right)=\disc(E/F) \cdot \cN(\cond(\chi)),  
\end{equation}
where $\disc(E/F)$ denotes the discriminant of the extension and $\cN$
denotes the norm from $E$ to $F$ (see for example \cite{MR0354618},
page 105 after Proposition 4). By class field theory there is a
bijection between Galois characters and Hecke characters (respecting
conductors). Then the previous formula, with
$E=\Q(\zeta_8)$ and $F=\Q$,  implies that if $\id{p}_2$ denotes the unique prime
ideal in $\Q(\zeta_8)$ dividing $2$, the Hecke character attached to
$\Jac(\cC(-1))$ over $\Q(\zeta_8)$ must have conductor
$\id{p}_2^4 = (2)$ (since $\disc(\Q(\zeta_8)/\Q)=2^8$). Using Pari/GP we can compute the finite set of all algebraic Hecke
characters of infinity type $(1,0),(1,0)$ and conductor dividing $2$ as follows: \vspace{1pt}
\begin{verbatim}
g=gcharinit(bnfinit(polcyclo(8)),2);
? g.cyc
% = [0, 0, 0, 0.E-57]
\end{verbatim}
This shows that there are no finite order characters with this
conductor, hence if one Hecke character with infinity type
$(1,0), (1,0)$ exists, it must be unique. We compute a Hecke character
with this infinity type with the command
\begin{verbatim}
chi=gcharalgebraic(g,[[1,0],[1,0]])[1];
\end{verbatim}
\vspace{1pt} The output matches our character $\chi$.  As a sanity check, since our Hecke
character has algebraic integers coefficients, we can check whether
the L-function attached to our hyperelliptic curve matches the one
attached to our character $\chi$.  \vspace{1pt}
\begin{verbatim}
Ls1=lfuncreate([g,chi]);
Ls2=lfungenus2(x*(x^4-1));
lfunan(Ls2,1000)==round(lfunan(Ls,1000))
\end{verbatim}
\vspace{1pt}
This verifies that the first thousand coefficients of the two
L-functions do match.
\end{rem}

The surface $\Jac(\cC(a))$ over $\overline{\Q}$ is
isogenous to the product of two elliptic curves. This was verified
numerically for some particular values of $a$ (using the algorithm
\cite{MR3882288}) to deduce the following result whose proof was communicated to us by
John Cremona.

\begin{prop}
  If $a=1$ then the surface $\Jac(\cC(1))$ is isogenous to the product of
  the elliptic curves with label $\lmfdbec{256}{d}{1}$ and
  $\lmfdbec{256}{a}{1}$.
\end{prop}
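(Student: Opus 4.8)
The plan is to exploit the fact that $\cC(1)\colon y^2 = x^5+x$ is bielliptic over $\Q$, exhibiting two independent degree-two maps to elliptic curves and deducing the splitting of the Jacobian from them. Besides the hyperelliptic involution $\iota\colon(x,y)\mapsto(x,-y)$, the curve admits the involution $\tau\colon(x,y)\mapsto\bigl(1/x,\,y/x^3\bigr)$, defined over $\Q$ since $(y/x^3)^2 = (x^5+x)/x^6 = (1/x)^5+(1/x)$. Together with $\iota$ it generates a Klein four-group $\{1,\iota,\tau,\iota\tau\}$ of automorphisms, the two non-hyperelliptic involutions being $\tau$ and $\iota\tau\colon(x,y)\mapsto\bigl(1/x,\,-y/x^3\bigr)$, both rational. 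A Riemann--Hurwitz count (each of $\tau,\iota\tau$ has exactly two fixed points, namely $(1,\pm\sqrt2)$ and $(-1,\pm\sqrt{-2})$ respectively) shows that the quotients $E_1=\cC(1)/\tau$ and $E_2=\cC(1)/\iota\tau$ have genus one.

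First I would compute the quotient maps explicitly. The function $w=y(x+1)/x^2$ is $\tau$-invariant, and together with $u=x+1/x$ a short computation yields the Weierstrass model
\[
\phi_1\colon \cC(1)\longrightarrow E_1\colon w^2 = u^3+2u^2-2u-4,\qquad (x,y)\longmapsto\Bigl(x+\tfrac1x,\ \tfrac{y(x+1)}{x^2}\Bigr).
\]
Likewise the $\iota\tau$-invariant $w'=y(x-1)/x^2$ gives
\[
\phi_2\colon \cC(1)\longrightarrow E_2\colon (w')^2 = u^3-2u^2-2u+4,\qquad (x,y)\longmapsto\Bigl(x+\tfrac1x,\ \tfrac{y(x-1)}{x^2}\Bigr).
\]
Both maps are defined over $\Q$; note that $E_2$ is the quadratic twist of $E_1$ by $-1$ (apply $u\mapsto -u$), a useful consistency check, since such a twist changes the conductor only at $2$.

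Finally I would assemble $\phi_1,\phi_2$ into $\Psi=(\phi_1,\phi_2)\colon\cC(1)\to E_1\times E_2$ and consider the induced map $\Psi_*\colon\Jac(\cC(1))\to E_1\times E_2$ on Albanese varieties, which is defined over $\Q$. To see that $\Psi_*$ is an isogeny it suffices, by equality of dimensions, to check that it is injective on cotangent spaces, i.e. that the pulled-back invariant differentials are independent. A direct computation gives $\phi_1^*(du/2w)=(x-1)\,dx/(2y)$ and $\phi_2^*(du/2w')=(x+1)\,dx/(2y)$, and since $(x-1)\,dx/y$ and $(x+1)\,dx/y$ form a basis of $H^0(\cC(1),\Omega^1)=\langle dx/y,\ x\,dx/y\rangle$, the map $\Psi_*$ is an isogeny over $\Q$. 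It then remains to reduce $E_1,E_2$ to minimal form and match their invariants (conductor $2^8=256$, discriminant, $j$-invariant) against the database, identifying them with \lmfdbec{256}{d}{1} and \lmfdbec{256}{a}{1}.

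I expect the main obstacle to be not the splitting itself --- which the differential computation settles cleanly --- but the bookkeeping needed to pin down the exact labels, and to control the precise kernel of $\Psi_*$ should one want the isogeny class rather than merely an isogeny. Matching minimal models and local data is routine, but must be carried out carefully to distinguish the two conductor-$256$ curves related by the $-1$ twist.
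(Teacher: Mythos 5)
Your proposal is correct, and it rests on the same underlying geometry as the paper's proof: both realize $\cC(1)$ as a double cover of two elliptic curves that are quadratic twists of one another by $-1$. Indeed, unwinding the paper's substitution $t=(1\pm x)/(1\mp x)$ followed by $(t,y)\mapsto(t^2,(t+1)^3y)$, the deck transformation of that degree-two map is exactly your involution $x\mapsto 1/x$ (composed with the hyperelliptic involution in the second case), so the two quotient pairs coincide after a fractional-linear change of coordinates; your models $w^2=u^3+2u^2-2u-4$ and its $-1$-twist are correct and have $j=8000$, as they must. What you add is the verification that $(\phi_1,\phi_2)$ induces an isogeny $\Jac(\cC(1))\to E_1\times E_2$, via the computation that the pulled-back differentials $(x\mp 1)\,dx/y$ span $H^0(\cC(1),\Omega^1)$; the paper's proof leaves this step implicit, simply asserting that the two maps ``prove the result,'' so your argument is the more complete one on this point. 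One caveat on your closing remark: conductor and $j$-invariant do not separate \lmfdbec{256}{a}{1} from \lmfdbec{256}{d}{1} (both have conductor $256$, $j=8000$, and CM by $\Z[\sqrt{-2}]$), so the final label-matching must go through the actual minimal models (e.g.\ the pair $(c_4,c_6)$ or a $2$-adic computation), exactly the bookkeeping you flag as the delicate part.
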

\begin{proof}
  If we denote by $t = \frac{1+x}{1-x}$, then $x = \frac{t-1}{t+1}$,
  so we can rewrite the equation of $\cC$ in the form
  \[
y^2 = \frac{(t-1)}{(t+1)}\frac{(t-1)^4+(t+1)^4}{(t+1)^4} = 2\frac{(t-1)}{(t+1)}\frac{(t^4+6t^2+1)}{(t+1)^4}.
\]
Cleaning denominators, we get the equation
\[
((t+1)^3y)^2 = 2(t^2-1) (t^4+6t^2+1).
  \]
So the map $(t,y) \to (t^2,(t+1)^3y)$ send the curve $\cC$ into the elliptic curve on the variables $(u,v)$ with equation
\[
v^2 = 2(u-1)(u^2+6u+1),
\]
which is isomorphic to the elliptic curve $\lmfdbec{256}{a}{1}$. Similarly, if we make the substitution $t = \frac{1-x}{1+x}$, so $x = \frac{1-t}{1+t}$, a similar computation as before gives a map from $\cC$ to the elliptic curve
\[
((t+1)^3y)^2 = -2(t^2-1) (t^4+6t^2+1),
\]
a quadratic twist by $\sqrt{-1}$ of the previous one (but they are not
isogenous over $\Q$). This proves the result.
\end{proof}

\begin{coro}
  If $a$ is a non-zero integer, then the surface $\Jac(\cC(a))$ is
  isogenous to the product of the quadratic twist of the elliptic
  curve $\lmfdbec{256}{a}{1}$ by $\sqrt[4]{a}$ times the quadratic
  twist of the curve $\lmfdbec{256}{d}{1}$ by $\sqrt[4]{a}$ over the
  field extension $\Q(\sqrt[4]{a})$.
\label{coro:splitting}
\end{coro}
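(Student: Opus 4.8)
The plan is to realize $\cC(a)$ as a twist of $\cC(1)$ over the field $F=\Q(\sqrt[4]{a})$, to read off the twisting datum on the level of Jacobians, and then to let the previous proposition do the rest. First I would recall the explicit isomorphism
\[
  \phi\colon \cC(1)\longrightarrow\cC(a),\qquad (x,y)\longmapsto(\sqrt[4]{a}\,x,\ \sqrt[8]{a^5}\,y),
\]
which is defined over $\Q(\sqrt[8]{a})$. Since $\sqrt[8]{a}$ is a square root of $\sqrt[4]{a}$, the extension $\Q(\sqrt[8]{a})/F$ is at most quadratic, and I would work relative to the base field $F$.

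Next I would compute the twisting cocycle $\sigma\mapsto\phi^{-1}\circ\sigma(\phi)\in\operatorname{Aut}(\cC(1)_{/\overline{\Q}})$ for $\sigma\in\Gal_F$. Because $\phi$ is defined over $\Q(\sqrt[8]{a})$, this cocycle factors through $\Gal(\Q(\sqrt[8]{a})/F)$. Writing $\tau$ for its nontrivial element, which fixes $\sqrt[4]{a}$ and sends $\sqrt[8]{a}\mapsto-\sqrt[8]{a}$, one checks directly that $\tau(\phi)(x,y)=(\sqrt[4]{a}\,x,\ -\sqrt[8]{a^5}\,y)$, whence
\[
  \phi^{-1}\circ\tau(\phi)\colon (x,y)\longmapsto(x,-y)
\]
is exactly the hyperelliptic involution of $\cC(1)$, which acts as multiplication by $-1$ on $\Jac(\cC(1))$. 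Thus, as a form of $\Jac(\cC(1))$ over $F$, the surface $\Jac(\cC(a))$ corresponds to the cocycle sending $\tau$ to $[-1]$; this is precisely the quadratic twist of $\Jac(\cC(1))$ by the quadratic character attached to $\Q(\sqrt[8]{a})/F$, namely the twist by $\sqrt[4]{a}\in F^\times$.

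Finally I would transport the splitting. The previous proposition gives an isogeny $\Jac(\cC(1))\sim E_a\times E_d$ over $\Q$, where $E_a$ and $E_d$ denote the elliptic curves \lmfdbec{256}{a}{1} and \lmfdbec{256}{d}{1}; in particular this isogeny is defined over $F$. Since quadratic twisting by a fixed character preserves products and carries isogenies defined over the base to isogenies, twisting this isogeny by $\sqrt[4]{a}$ produces an isogeny over $F=\Q(\sqrt[4]{a})$ between $\Jac(\cC(a))$ and $E_a^{(\sqrt[4]{a})}\times E_d^{(\sqrt[4]{a})}$, which is the assertion.

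The only delicate point is the middle step: one must confirm that the cocycle lands in the hyperelliptic involution for every $\sigma\in\Gal_F$ (and not merely for the chosen lift $\tau$), so that it genuinely descends to the quadratic character of $\Q(\sqrt[8]{a})/F$, and that this character is the one cut out by $\sqrt[4]{a}$ rather than some other square class. I expect the real bookkeeping to be this tracking of square classes; once the twisting datum is identified with $[-1]$, the remaining steps are formal.
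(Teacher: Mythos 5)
Your proof is correct, but it reaches the splitting by a different, more formal route than the paper. The paper never leaves the base field $F=\Q(\sqrt[4]{a})$: it applies the substitution $(x,y)\mapsto(\sqrt[4]{a}\,x,\sqrt{a}\,y)$, which is already defined over $F$ because $\sqrt{a}=(\sqrt[4]{a})^2$, and which identifies $\cC(a)$ directly with the explicit quadratic-twist model $y^2=\sqrt[4]{a}\,x(x^4+1)$; it then reruns the two rational maps from the proof of the preceding proposition to land on $v^2=\pm 2\sqrt[4]{a}(u-1)(u^2+6u+1)$, i.e.\ the quadratic twists by $\sqrt[4]{a}$ of \lmfdbec{256}{a}{1} and \lmfdbec{256}{d}{1}. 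You instead go up to $\Q(\sqrt[8]{a})$, compute the descent cocycle of $\phi$, identify it with the hyperelliptic involution (hence with $[-1]$ on the Jacobian), and twist the $\Q$-isogeny of the proposition formally. Both arguments rest on the same fact, namely that over $F$ the curve $\cC(a)$ is the quadratic twist of $\cC(1)$ by the class of $\sqrt[4]{a}$; the paper's version buys explicit equations for the two elliptic quotients (which it exploits later for conductor and root-number computations), while yours isolates the conceptual content and disposes of the ``delicate point'' you flag at the end: since $\phi$ is defined over $\Q(\sqrt[8]{a})$ and every $\sigma\in\Gal_F$ fixes $(\sqrt[8]{a})^2=\sqrt[4]{a}$, the cocycle automatically factors through $\Gal(\Q(\sqrt[8]{a})/F)$, and the quadratic character it cuts out is exactly the one attached to the square class of $\sqrt[4]{a}$ in $F^\times/(F^\times)^2$ (trivial precisely when $a$ is an eighth power, in which case both statements degenerate consistently). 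The final transport step is sound because any isogeny of abelian varieties commutes with $[-1]$, so twisting by a fixed quadratic character carries the $F$-isogeny $\Jac(\cC(1))\sim E\times E'$ to an $F$-isogeny of the corresponding twists.
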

\begin{proof}
  Over the field $\Q(\sqrt[4]{a})$, the map
  $(x,y) \to (\sqrt[4]{a}x,\sqrt{a}y)$ gives an isomorphism between the curve $\cC(a)$
  and the curve
  \[
y^2 = \sqrt[4]{a}x(x^4+1).
\]
Then the same proof as before gives a rational map to the curve
\[
v^2 = 2\sqrt[4]{a}(u-1)(u^2+6u+1),
\]
which is $\lmfdbec{256}a1$,
and its quadratic twist by $-1$, which is $\lmfdbec{256}d1$.
\end{proof}
The main result (Theorem 1) obtained in \cite{MR4231527} is the
following: for a non-zero integer $a$, let $\omega(a)$ denote the
number of prime divisors of $a$.
\begin{thm}
Suppose that $a$ is $8$-th power
free, that the class number of $\Q(\sqrt[4]{-a})$ is odd and that
every prime divisor of $2a$ has a unique prime dividing it in
$\Q(\sqrt[4]{-a})$. Then if $a<0$, the $2$-selmer rank of $\Jac(\cC(a))$ is bounded
above by $\omega(2a)+3$   
\label{Jed}
\end{thm}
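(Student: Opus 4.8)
The plan is to recover this bound directly from the descent framework of Sections~\ref{section:local}--\ref{s: main theorems}, treating the primes dividing $2a$ as a controlled set of exceptional places. First I would record the arithmetic of the algebra: for $a<0$ the polynomial factors as $p(x)=x\cdot(x^4+a)$ with $x^4+a$ (generically) irreducible, so $A_\Q\simeq\Q\times\Q(\sqrt[4]{-a})$, while $g=2$ and $[K:\Q]=1$. Since $-a>0$ the real roots of $p$ are $-|a|^{1/4}<0<|a|^{1/4}$, so in the notation of \S\ref{s: archimedean places} we have $\tilde v=-|a|^{1/4}$, $v_1=0$ (the root coming from the factor $x$), $v_1'=|a|^{1/4}$, with $s_\infty=1$ and one complex pair. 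This signature is exactly what controls the archimedean conditions defining $\Cl_*(A_\Q,\cC(a))$.

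Away from $2a$ the curve satisfies (\dag) (a routine local check), so by Theorem~\ref{thm:integrality} and Corollary~\ref{coro:index} every Selmer class localizes to an integral, finite-unramified class there. The failure of (\dag) is concentrated at the places above $2a$, so I would enlarge the group $\tilde C(\cC)$ of Proposition~\ref{prop:upperbound} to the group $\tilde C_S$ obtained by dropping the even-valuation requirement at $S:=\{w\mid 2a\}$ while retaining $\cN(\alpha)\in(\Q^\times)^2$ and the archimedean conditions of Lemma~\ref{lemma:arqimage}. One still has $\Sel_2(\Jac(\cC(a)))\subset\tilde C_S$, so it suffices to bound $\dim_{\F_2}\tilde C_S$ through the filtration $C_*(\cC)\subset\tilde C(\cC)\subset\tilde C_S$.

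The three resulting pieces I would estimate as follows. For the bottom piece, Proposition~\ref{prop:C1asclassgroup} identifies $C_*(\cC)$ with $\Cl_*(A_\Q,\cC(a))[2]$; since $\Q$ has trivial class group and $\Q(\sqrt[4]{-a})$ has odd class number, $\Cl(A_\Q)[2]=0$, and hence every class in $C_*(\cC)$ is realised by a sign pattern on the three real places above $\tilde v,v_1,v_1'$. The condition ``unramified at $\tilde v$'' kills the $\tilde v$-coordinate and the condition ``ramified at $v_1$ iff at $v_1'$'' equates the other two, leaving $\dim_{\F_2}\Cl_*(A_\Q,\cC(a))[2]\le 1$. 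The middle piece is the universal index $[\tilde C(\cC):C_*(\cC)]\le 2^{g[K:\Q]}=2^2$ from Theorem~\ref{thm:bounds}, whose proof uses only hypotheses~\ref{hyp:hyp}(1)--(2) and is insensitive to the relaxation at $S$; it supplies the additive $2$. For the top piece, the hypothesis that each rational $p\mid 2a$ has a unique prime above it in $\Q(\sqrt[4]{-a})$ means $S$ contains exactly the place $p$ of the $\Q$-factor and one place $\mathfrak p$ of the $\Q(\sqrt[4]{-a})$-factor over each such $p$; the relation $v_p(\cN\alpha)=v_p(\alpha_1)+f(\mathfrak p\mid p)\,v_{\mathfrak p}(\alpha_2)\equiv 0\pmod 2$ (in the standard normalization) cuts the two mod-$2$ valuations down to a single $\F_2$, so each rational $p\mid 2a$ contributes at most one dimension and $\dim_{\F_2}(\tilde C_S/\tilde C)\le\omega(2a)$. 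Adding the three pieces gives $\dim_{\F_2}\Sel_2(\Jac(\cC(a)))\le 1+2+\omega(2a)=\omega(2a)+3$.

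The hard part will be the exceptional-place bookkeeping at primes $p\mid a$: there the roots $0$ and $\pm|a|^{1/4}$ collide modulo $p$, so (\dag.i) genuinely fails and integrality is lost, and one cannot simply quote Theorem~\ref{thm:integrality}. The ``unique prime'' hypothesis together with $8$-th-power-freeness is precisely what should confine the loss to the single valuation coordinate counted above and guarantee that the $\cN(\alpha)\in(\Q^\times)^2$ constraint couples the $\Q$- and $\Q(\sqrt[4]{-a})$-components into one $\F_2$ (the prime $2$, also lying in $S$, is handled identically and is already counted inside $\omega(2a)$). Pinning down the constant $1$ in the $\Cl_*[2]$ estimate --- an odd-class-number input of the same flavour as Theorem~\ref{thm:oddcardinality} --- is the other delicate point, and is what ultimately fixes the additive constant $3$.
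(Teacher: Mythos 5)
You should know at the outset that the paper does not prove this statement: Theorem~\ref{Jed} is quoted verbatim as Theorem~1 of \cite{MR4231527}, and its only role here is to serve as a benchmark that the authors then improve (for $a=-p$ they derive the sharper bound \eqref{eq:bound1}, $\dim_{\F_2}\Sel_2(\Jac(\cC(-p)))\le 3$, precisely because $\omega(2a)+3=5$ is too weak for them). So there is no internal proof to compare against; the original argument in \cite{MR4231527} is a direct $2$-descent computation. Your proposal is, however, essentially the derivation the authors themselves sketch in \S\ref{section:family} and in the introduction (the bound $\dim_{\F_2}\Cl_*(A_\Q,\cC(a))[2]+2+\#\{p : p\mid a\}$), and it is correct: away from $2a$ the discriminant $\pm 2^8a^5$ is a unit, so \textup{(\dag.i)} holds and Theorem~\ref{thm:integrality} forces integrality; the index $[\tilde{C}(\cC):C_*(\cC)]\le 2^{g[K:\Q]}=4$ from Theorem~\ref{thm:bounds} indeed uses nothing about \textup{(\dag)}; and at each $p\mid 2a$ the unique-prime hypothesis puts exactly two places of $A_\Q$ above $p$, where the norm condition $v_p(\alpha_1)+f(\mathfrak{p}\mid p)v_{\mathfrak{p}}(\alpha_2)\equiv 0\pmod 2$ is a nonzero linear form (the $\Q$-place has residue degree $1$), so the valuation vector lies in a one-dimensional subspace of $\F_2^2$ and contributes at most one dimension. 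The sum $1+2+\omega(2a)$ gives the claim. In fact your route proves more: your estimate $\dim_{\F_2}\Cl_*(A_\Q,\cC(a))[2]\le 1$ is really an equality with $0$, because the surviving sign pattern $(+,-,-)$ would force the $\Q$-component of $\alpha$ to be $-1$ up to squares, and $\Q(\sqrt{-1})/\Q$ ramifies at $2$; so you actually get $\omega(2a)+2$, consistent with the paper's improvement.

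Two smaller points. First, $x^4+a$ is irreducible only when $-a$ is not a perfect square; this is implicit in the hypothesis that $\Q(\sqrt[4]{-a})$ is a quartic field with a class number, but you should state it rather than write ``generically''. Second, your closing paragraph worries about exceptional-place bookkeeping at $p\mid a$, but in your own setup there is nothing left to do there: for an upper bound you have discarded all local conditions at $S$ except the (automatic) norm condition, so the loss is already fully captured by the $\omega(2a)$ valuation coordinates and no analysis of $\im(\delta_p)$ is needed. That worry would only become relevant if you wanted a lower bound or a sharper count at the bad primes.
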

Suppose that $a<0$. A necessary condition for the class
number of $\Q(\sqrt[4]{a})$ to be odd is that either $a$ is prime, or
it is twice a prime. Let us restrict to the case $a$ an odd prime
number (that we denote by $p$ to emphasize our hypothesis).

We would like to apply our main result to the curve $\cC(-p)$ over
$\Q$ to improve the upper bound. For
that purpose we need to verify whether Hypotheses~\ref{hyp:hyp} are
satisfied. Our polynomial has odd degree and we are working over the
rationals, hence the first two hypothesis are satisfied.

The $\Q$-algebra 
$A_\Q= \Q\times \Q(\sqrt[4]{p})$, 
$\disc(x(x^4-p)) = -2^8\cdot p^5$ and $\disc(x^4-p)=-2^8\cdot p^3$, so $(\dag.i)$ is satisfied at all primes but $p$.
%
%
%
The problem is 
(\dag.ii) is also not satisfied at $p$ because the Neron model has two
components (see \cite{Namikawa}, page 156, type VII).
However, not everything is lost. Our local map is an injective morphism
\[
  \delta_p : \Jac(\cC(-p))(\Q_p)/2\Jac(\cC(-p))(\Q_p) \hookrightarrow ((\Q_p \times \Q_p(\sqrt[4]{p}))^\times/(A_{\Q_p}^\times)^2)_\square.
  \]
  Any element in the image is the class of an element of the form
  $(\epsilon_1p^r,\epsilon_2 \sqrt[4]{p}^r)$ for $\epsilon_i$ units
  and $r \in {0,1}$. In particular, the image is bounded above by
  twice the elements which are units up to squares, hence we still can
  provide an upper bound in terms of the class group
  $\Cl_*(A_{\Q},\cC(-p))[2]$, namely
  \begin{equation}
    \label{eq:bound1}
  0 \le \dim_{\F_2}\Sel_2(\Jac(\cC(-p))) \le \Cl_*(A_{\Q},\cC(-p))[2]+2+1.    
  \end{equation}
  We are led to compute $\Cl_*(A_\Q,\cC(-p))[2]$. Firstly we need to
  understand the class group of the extension $\Q(\sqrt[4]{p})$.
  
\begin{lemma}
  If $a$ is square-free and $a \equiv 1,2 \pmod 4$ then the ring of integer of $\Q(\sqrt[4]{-a})$ is $\Z[\sqrt[4]{-a}]$.
\label{lemma:dagacondition}
\end{lemma}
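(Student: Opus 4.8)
The plan is to prove the equivalent statement that the index $[\cO_K:\Z[\sqrt[4]{-a}]]$ equals $1$, where $K=\Q(\sqrt[4]{-a})$. Set $\theta=\sqrt[4]{-a}$, whose minimal polynomial is $f(x)=x^4+a$ with $\disc(f)=2^8a^3$. Since $\disc(f)=[\cO_K:\Z[\theta]]^2\cdot\disc(\cO_K)$, a prime can divide the index only if its square divides $\disc(f)$; as $a$ is square-free, the only such primes are $2$ and the odd primes $q\mid a$. It therefore suffices to verify Dedekind's criterion at each of these primes, i.e.\ to factor $\bar f\pmod p$ as $\bar g\,\bar h$ with $\bar g$ the radical, form the error polynomial $F=(gh-f)/p\in\Z[x]$, and check that $\gcd(\bar F,\bar g,\bar h)=1$ in $\F_p[x]$.

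First I would dispose of the odd primes $q\mid a$. Modulo such a $q$ we have $f\equiv x^4$, so $\bar g=x$, $\bar h=x^3$, and $gh-f=-a$. Hence $F=-a/q$ is a constant, and because $a$ is square-free we have $q\nmid a/q$, so $\bar F$ is a nonzero constant in $\F_q$ and is not divisible by $x$. Dedekind's criterion gives $q\nmid[\cO_K:\Z[\theta]]$.

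The prime $2$ splits into the two congruence cases, and this is where the hypothesis $a\equiv 1,2\pmod 4$ is used. If $a\equiv 2\pmod 4$, write $a=2m$ with $m$ odd; then $f\equiv x^4\pmod 2$ and exactly as above $F=-a/2=-m$ is an odd constant, so $\bar F$ is a unit in $\F_2$ and $2\nmid$ index. If instead $a\equiv 1\pmod 4$ (so $a$ is odd), then $f\equiv x^4+1\equiv(x+1)^4\pmod 2$; taking $g=x+1$ and $h=(x+1)^3$ I would expand $gh-f=4x^3+6x^2+4x+(1-a)$, use $4\mid(1-a)$ to divide by $2$, obtaining $F=2x^3+3x^2+2x+(1-a)/2$, whose reduction is $\bar F=x^2\in\F_2[x]$. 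Since $x+1\nmid x^2$, the criterion again yields $2\nmid$ index. Combining the three cases shows the index is $1$, i.e.\ $\cO_K=\Z[\sqrt[4]{-a}]$.

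The main obstacle is the prime $2$ in the odd case $a\equiv 1\pmod 4$: here $f$ reduces to the fourth power $(x+1)^4$ modulo $2$, so the crude bound coming from $\disc(f)$ allows a large power of $2$ in the index, and one genuinely has to compute the Dedekind error polynomial $F$ and confirm that $x+1\nmid\bar F$. This same computation is what forces the congruence condition: for $a\equiv 3\pmod 4$ the quadratic subfield $\Q(\sqrt{-a})$ is already non-monogenic at $2$ (its ring of integers is $\Z[\tfrac{1+\sqrt{-a}}{2}]$), so $\Z[\sqrt[4]{-a}]$ cannot be maximal and the lemma fails, which explains why that residue class is excluded.
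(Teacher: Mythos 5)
Your proof is correct: the discriminant computation $\disc(x^4+a)=2^8a^3$ correctly isolates $2$ and the odd divisors of $a$ as the only primes that could divide the index, and the three Dedekind-criterion verifications are all accurate (in particular, for $a\equiv 1\pmod 4$ the error polynomial $F=2x^3+3x^2+2x+(1-a)/2$ does reduce to $x^2$ modulo $2$, which is coprime to the radical $x+1$). The paper does not argue this at all: its entire proof is a citation to Theorem~1 of Funakura's classification of integral bases of pure quartic fields, which treats all residue classes of $a$ and in the excluded cases describes the correct (non-monogenic) integral basis. Your route is more elementary and self-contained, proving exactly the special case needed; what the citation buys the authors is brevity plus the full picture for $a\equiv 3\pmod 4$, which you only address with a remark (a correct one --- since $\sqrt{-a}=\theta^2$ and $(1+\theta^2)/2$ is integral but lies outside $\Z[\theta]$, monogenicity by $\theta$ genuinely fails there). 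Either approach is a complete proof of the stated lemma.
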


\begin{proof}
  See \cite[Theorem 1]{MR779772}.
\end{proof}

\begin{thm}
  Let $p$ be a prime number congruent to $3$ modulo $8$. Then the
  class group of $\Q(\sqrt[4]{p})$ has odd cardinality, while its
  narrow class group has twice its cardinality. Its element of order
  $2$ corresponds to the quadratic extension 
  $\Q(\sqrt[4]{p},\sqrt{-1})$.
\label{thm:classnumber}
\end{thm}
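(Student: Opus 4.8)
The plan is to set $K=\Q(\sqrt[4]{p})$ and $L=\Q(\sqrt[4]{p},\sqrt{-1})=K(\sqrt{-1})$, and to deduce everything from Theorem~\ref{thm:oddcardinality}, which gives that $\Cl(L)$ has odd order. First I would record the arithmetic of $K$. By Lemma~\ref{lemma:dagacondition} (applied with $a=-p$, which is squarefree and $\equiv 1\pmod 4$) the ring of integers is $\cO_K=\Z[\sqrt[4]{p}]$, and since $x^4-p\equiv (x+1)^4\pmod 2$ the prime $2$ is totally ramified, say $(2)=\fp_2^{4}$, with a unique prime $\fp_2$ above it. The polynomial $x^4-p$ has exactly two real roots $\pm\sqrt[4]{p}$ and one conjugate pair of complex roots, so $K$ has signature $(2,1)$; in particular $\sqrt{-1}\notin K$ and $[L:K]=2$, while $L$ is totally imaginary.

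The key local step --- and the place where the hypothesis $p\equiv3\pmod8$ is used --- is to show that $L/K$ is unramified at every finite prime. Away from $2$ this is immediate, since $\mathrm{disc}(x^2+1)=-4$. At $\fp_2$ I would argue locally in $K_{\fp_2}$. Because $p\equiv 3\pmod 8$ we have $-5p\equiv 1\pmod 8$, so $-5p$ is a square in $\Q_2$; writing $-5p=c^2$ gives $\sqrt{-5}=c/\sqrt{p}\in\Q_2(\sqrt{p})\subset K_{\fp_2}$, whence $-5$ is a square in $K_{\fp_2}$ and $K_{\fp_2}(\sqrt{-1})=K_{\fp_2}(\sqrt{5})$. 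Now $\Q_2(\sqrt5)/\Q_2$ is the unramified quadratic extension, and $K_{\fp_2}/\Q_2$ is totally ramified (hence contains no unramified quadratic subextension, forcing $\sqrt5\notin K_{\fp_2}$), so $K_{\fp_2}(\sqrt5)/K_{\fp_2}$ is a genuine unramified quadratic extension. Thus $L/K$ is unramified at $\fp_2$, and therefore at all finite primes; on the other hand it turns the two real places of $K$ into complex ones, so it ramifies at those archimedean places.

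With this in hand the argument is pure class field theory. Let $H_K$ and $H_K^{+}$ denote the Hilbert and narrow Hilbert class fields of $K$, so that $\Gal(H_K^{+}/K)\cong\Cl^{+}(K)$ and $\Gal(H_K^{+}/H_K)\cong\ker(\Cl^{+}(K)\to\Cl(K))$. Since $L/K$ is abelian and unramified at all finite primes, $L\subset H_K^{+}$. Base-changing to the totally imaginary field $L$, the extension $H_K^{+}/L$ is still unramified at all finite primes and (vacuously) at the infinite ones, so $H_K^{+}\subset H_L$ and $\Gal(H_K^{+}/L)$ is a quotient of $\Cl(L)$, hence of odd order by Theorem~\ref{thm:oddcardinality}. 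As $[L:K]=2$ this gives $\#\Cl^{+}(K)=2\,\#\Gal(H_K^{+}/L)$ with the second factor odd, i.e. $\Cl^{+}(K)$ has exactly one factor of $2$.

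Finally I would separate the narrow and ordinary class groups. Because $L/K$ ramifies at the real places, $L\not\subset H_K$, so $H_K\subsetneq H_K^{+}$ and $[\Cl^{+}(K):\Cl(K)]\ge 2$. Together with $v_2(\#\Cl^{+}(K))=1$ this forces $[\Cl^{+}(K):\Cl(K)]=2$ and $\Cl(K)$ of odd order, proving the first two assertions. For the last one, $\Gal(H_K^{+}/H_K)$ has order $2$ and $L\,H_K$ is a quadratic extension of $H_K$ inside $H_K^{+}$ (as $L\not\subset H_K$), so $L\,H_K=H_K^{+}$; thus the unique element of order $2$ in $\Cl^{+}(K)$ is the one cut out by the quadratic extension $L=\Q(\sqrt[4]{p},\sqrt{-1})$. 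The main obstacle is the local computation at $2$: everything hinges on showing that $L/K$ is unramified at $\fp_2$, which is precisely where the congruence $p\equiv3\pmod8$ enters.
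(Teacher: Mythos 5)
Your proof is correct and rests on the same pivot as the paper's: base-change the question to $L=\Q(\sqrt[4]{p},\sqrt{-1})$ and invoke the oddness of $\Cl(L)$ supplied by Theorem~\ref{thm:oddcardinality}. The one substantive difference is that you explicitly verify that $L/\Q(\sqrt[4]{p})$ is unramified at the unique prime $\fp_2$ above $2$ (via $-5p\equiv 1\pmod 8$, so that adjoining $\sqrt{-1}$ to the completion at $\fp_2$ is the same as adjoining $\sqrt{5}$, an unramified extension of the totally ramified completion); the paper's two-line proof only establishes that any quadratic extension unramified at all finite places must \emph{equal} $L$, and leaves this existence half --- which is precisely where $p\equiv 3\pmod 8$ enters and which is needed to conclude that the narrow class number is \emph{twice} the class number rather than equal to it --- implicit. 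Your write-up is therefore the more complete of the two.
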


Before proving the result, we need some auxiliary results, whose proof
are based on Gauss' results on binary quadratic forms (see
\cite{MR3236783}, \S 6).

\begin{prop}
  If $p$ is a prime number, $p \equiv 3 \pmod 4$ then $\Cl(\Q[\sqrt{-1},\sqrt{p}])$ has odd cardinality.
\end{prop}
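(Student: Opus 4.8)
The plan is to reduce the computation of $\Cl(K)$, where $K=\Q(\sqrt{-1},\sqrt{p})$, to the class numbers of its three quadratic subfields and then apply genus theory (Gauss' theory of binary quadratic forms) to each of them. The field $K$ is an imaginary bicyclic biquadratic field whose three quadratic subfields are $\Q(\sqrt{-1})$, $\Q(\sqrt{p})$ and $\Q(\sqrt{-p})$, the first and third being imaginary and the second real. I would invoke Kuroda's class number formula for such a field, which in the imaginary case reads
\[
 h_K = \tfrac{1}{2}\, q(K)\, h_{\Q(\sqrt{-1})}\, h_{\Q(\sqrt{p})}\, h_{\Q(\sqrt{-p})},
\]
where the unit index $q(K)=[\cO_K^\times : \cO_{\Q(\sqrt{-1})}^\times\,\cO_{\Q(\sqrt{p})}^\times\,\cO_{\Q(\sqrt{-p})}^\times]$ lies in $\{1,2\}$ (the bound $q(K)\le 2$ is the classical Hasse unit index for the CM field $K$).

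The heart of the argument is to show that each of the three subfield class numbers is odd. For $\Q(\sqrt{-1})$ this is clear, since its class number is $1$. For $\Q(\sqrt{-p})$, as $p\equiv 3\pmod 4$ the discriminant is $-p$, a prime discriminant, so Gauss' genus theory gives a trivial genus group, whence the $2$-part of $\Cl(\Q(\sqrt{-p}))$ is trivial and $h_{\Q(\sqrt{-p})}$ is odd. The delicate case is $\Q(\sqrt{p})$, whose discriminant $4p$ has the two prime-discriminant factors $-4$ and $-p$, so the narrow genus group has order $2$ and the narrow class group $C^{+}$ has cyclic $2$-part. Here I would use two inputs from Gauss' theory: first, that $p\equiv 3\pmod 4$ forces the fundamental unit to have norm $+1$ (the Pell equation $x^2-py^2=-1$ has no solution, since $-1$ is not a square modulo $p$), so the ordinary class group $C$ satisfies $[C^{+}:C]=2$; and second, that the ramified prime $\mathfrak p=(\sqrt p)$ is principal in the ordinary sense (it has norm $p$ and $\mathfrak p^2=(p)$) but represents the \emph{nontrivial} genus in $C^{+}/(C^{+})^2$, because $\sqrt p$ is not totally positive. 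Since a nontrivial genus class cannot be a square, the order-$2$ kernel of $C^{+}\to C$ is not contained in $(C^{+})^2$; for a cyclic $2$-group this forces the $2$-part of $C^{+}$ to have order exactly $2$, and therefore $h_{\Q(\sqrt{p})}=\tfrac12\,|C^{+}|$ is odd.

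With all three subfield class numbers odd, the product $h_{\Q(\sqrt{-1})}h_{\Q(\sqrt{p})}h_{\Q(\sqrt{-p})}$ is odd. Since $h_K$ is a positive integer equal to $\tfrac12 q(K)$ times this odd number and $q(K)\in\{1,2\}$, integrality forces $q(K)=2$, and then $h_K$ equals an odd number. In particular $\Cl(K)$ has odd cardinality, as claimed.

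I expect the main obstacle to be the genus-theoretic analysis of $\Q(\sqrt p)$: one must establish not merely that the narrow $2$-rank is $1$, but that the $2$-part of the narrow class group has order exactly $2$, which is precisely where the principality of the ramified prime $(\sqrt p)$ together with its nontriviality in the narrow genus group (both consequences of Gauss' theory, cf.\ \cite{MR3236783}) enters. A secondary point requiring care is the exact normalization of Kuroda's formula and the bound $q(K)\le 2$; the integrality argument above avoids an explicit computation of $q(K)$, but one should confirm that the version of the formula invoked carries the stated power of $2$.
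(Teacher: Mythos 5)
Your proof is correct, but it takes a genuinely different route from the paper's. The paper works directly with the Hilbert class field $H$ of $L=\Q(\sqrt{-1},\sqrt{p})$ over the base $F=\Q(\sqrt{-1})$: a lift $\sigma$ of the nontrivial element of $\Gal(L/F)$ acts on $\Cl(L)$ by inversion (because $\id{a}\cdot\sigma(\id{a})$ comes from $F$, whose class group is trivial), so the subfield $H^{\Cl(L)^2}$ is abelian over $F$; any unramified quadratic extension $L(\sqrt{\alpha})/L$ then descends to a quadratic extension of $F$ unramified outside $p$, forcing $\alpha\in\{i,p,ip\}$ up to squares, and only $\alpha=p$ survives the ramification condition at $2$ --- but that gives $L$ itself. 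You instead invoke Kuroda's class number formula $h_K=\tfrac12 q(K)h_1h_2h_3$ with $q(K)\in\{1,2\}$ and reduce to the oddness of the class numbers of the three quadratic subfields, which you establish by classical genus theory; the only delicate case, $\Q(\sqrt{p})$, is handled correctly (norm $+1$ fundamental unit, cyclic narrow $2$-part, and the class of $(\sqrt p)$ of order $2$ lying outside $(C^+)^2$). Your route is shorter and, as a byproduct, the integrality argument pins down $q(K)=2$, which is consistent with (and a weak form of) Proposition~\ref{prop:units}. What the paper's longer route buys is reusability: the identical scaffolding, with $F=\Q(\sqrt{-1},\sqrt{p})$ and $L=\Q(\sqrt{-1},\sqrt[4]{p})$, is exactly what proves Theorem~\ref{thm:oddcardinality}, where $L/\Q$ is dihedral of degree $8$ and the $V_4$ class number formula is no longer available. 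One small misattribution in your write-up: the fact that $\sqrt{p}$ is not totally positive shows that $[(\sqrt p)]$ is nontrivial in $C^+$ (given that all units are $\pm$ a totally positive unit), but its nontriviality in the genus quotient $C^+/(C^+)^2$ comes from the genus character value $\tkro{-4}{p}=-1$ for $p\equiv 3\pmod 4$; both facts are standard, so this is a slip of justification rather than a gap.
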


\begin{proof}
Consider the following diagram
\[
  \xymatrix{
    & H \ar@{-}[d]\\
    & L=\Q(\sqrt{-1},\sqrt{p})\ar@{-}[d]\\
    &F=\Q(\sqrt{-1}) \\
  }
\]
where $H$ is the Hilbert class field of $L$ (a Galois extension of
$F$). Suppose that $L(\sqrt{\alpha})$ is a subextension of $H$. Since
$\Gal(H/L) \simeq \Cl(L)$,
then $L(\sqrt{\alpha})\subseteq \tilde{L} = H^{\Cl(L)^2}$.

\vspace{2pt}      
      
\noindent {\bf Claim:} $\tilde{L}/F$ is an abelian extension.

\vspace{2pt}

The proof mimics the one given in \cite[Theorem 6.1, page
122]{MR3236783}, replacing complex conjugation by another element of
order two.  Let $\sigma \in \Gal(H/F)$ be any element such that
$\sigma(\sqrt{p})=-\sqrt{p}$.  Note that if $\id{a}$ is an element in
$\Cl(L)$ then $\id{a}\cdot \sigma(\id{a})$ matches an ideal of $F$
extended to $\cO_L$. In particular, $\id{a}\cdot \sigma(\id{a})$ is a
principal ideal (since the class group of $F$ is trivial), so as
elements of the class group $\Cl(L)$, $\id{a}^{-1} = \sigma(\id{a})$.


Consider the following exact sequence
\begin{equation}
  \label{eq:exact}
  \xymatrix{ 1 \ar[r] & \Gal(H/L) \ar[r] & \Gal(H/F) \ar[r] &
    \Gal(L/F) \ar[r] & 1.  }
\end{equation}
The map $\sigma$ provides a splitting of it, since
$\sigma^2 \in \Gal(H/L)$ and it induces the identity on the class
group $\Cl(L)$ (recall that $\sigma(\id{a}) = \id{a}^{-1}$). In
particular, $\Gal(H/F) \simeq \Z/2 \ltimes \Cl(L)$, where the action
of $1$ send an ideal $\id{a}$ to its inverse. Then the proof of
\cite[Theorem 6.1, page 122]{MR3236783} proves that
$\Cl(L)^2 = [G,G]$, where $G = \Gal(H/F)$, and in particular
$\Gal(\tilde{L}/F)\simeq G/[G,G]$ is abelian.
\vspace{2pt}

Since $L(\sqrt{\alpha}) \subset \tilde{L}$ and $\tilde{L}/F$ is an
abelian extension, the extension $L(\sqrt{\alpha})/F$ is Galois. The
group $\Gal(L(\sqrt{\alpha})/F)$ is isomorphic to either $\Z/2$, to
$\Z/2 \times \Z/2$ or it is cyclic of order $4$. The last case cannot
occur, because $\Z/4$ does not fit into the exact
sequence~(\ref{eq:exact}).

In the first two cases, we can assume (without loss of generality)
that $\alpha \in F$, so $F(\sqrt{\alpha})$ is a quadratic extension of
$F$ unramified outside $p$. Since the class number of $F$ is one,
$\alpha$ can be chosen so that the ideal it generates is supported
only at the prime $p$ (which is inert in $F$ because $p$ is congruent
to $3$ modulo $4$) and hence $\alpha \in \{i, p, ip\}$ (up to
squares). But among these possibilities, the only quadratic extension
which is unramified at $2$ is $F(\sqrt{p})=L$.
\end{proof}

\begin{prop}
  If $p \equiv 3 \pmod 8$ then the fundamental unit of
  $\Q(\sqrt{p},\sqrt{-1})$ equals $\sqrt{i \cdot \varepsilon_p}$, where
  $\varepsilon_p$ is a totally positive fundamental unit of $\Q(\sqrt{p})$.
\label{prop:units}
\end{prop}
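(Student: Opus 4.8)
The plan is to realize $L=\Q(\sqrt{p},i)$ (with $i=\sqrt{-1}$) as a CM field with maximal totally real subfield $L^+=\Q(\sqrt p)$, and to reduce everything to an explicit identity for the fundamental unit of $\Q(\sqrt p)$. Since $p\equiv 3\pmod 4$, the equation $x^2-py^2=-1$ is unsolvable, so the fundamental unit has norm $+1$; writing $\varepsilon_p=a+b\sqrt p$ with $a,b>0$ and $a^2-pb^2=1$, both real conjugates are positive, so $\varepsilon_p$ is genuinely totally positive, and $\cO_{L^+}=\Z[\sqrt p]$ gives $a,b\in\Z$. The group $W_L$ of roots of unity of $L$ equals $\mu_4=\langle i\rangle$ when $p>3$ (since $\sqrt2\notin L$ excludes $\mu_8$ and $\sqrt{-3}\notin L$ excludes $\mu_3$), and $\mu_{12}$ when $p=3$; in either case the Hasse unit index used below is at most $2$.

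The arithmetic heart of the argument is the identity $2\varepsilon_p=\gamma^2$ for some $\gamma\in\Z[\sqrt p]$, which I would extract from $(a-1)(a+1)=pb^2$. If $a$ were odd then $b$ is even, and $\tfrac{a-1}{2},\tfrac{a+1}{2}$ are coprime with product $p\,(b/2)^2$; the coprime factorization of an integer of the shape $p\cdot(\text{square})$ forces one factor to be a square and the other to be $p$ times a square, which would exhibit $\varepsilon_p=(c+d\sqrt p)^2$ as a square in $\Z[\sqrt p]$, contradicting that $\varepsilon_p$ is fundamental. Hence $a$ is even and $b$ odd, so $a-1$ and $a+1$ are coprime odd integers with product $pb^2$; the same principle gives $\{a-1,a+1\}=\{c^2,pd^2\}$ with $b=cd$, and then $\gamma:=c+d\sqrt p$ satisfies $\gamma^2=c^2+pd^2+2cd\sqrt p=2a+2b\sqrt p=2\varepsilon_p$.

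With $\gamma$ in hand I set $\eta:=\tfrac{1+i}{2}\,\gamma\in L$ and compute $\eta^2=\tfrac{(1+i)^2}{4}\gamma^2=\tfrac{2i}{4}\cdot 2\varepsilon_p=i\,\varepsilon_p$, so that $\eta=\sqrt{i\,\varepsilon_p}$ really lies in $L$. As $\eta^2=i\,\varepsilon_p$ is a unit and $\eta$ is integral over $\cO_L$, it follows that $\eta\in\cO_L^\times$. To see that $\eta$ is \emph{fundamental} I would use that both $L$ and $L^+$ have unit rank $1$, so the Hasse index $Q=[\cO_L^\times:W_L\,\cO_{L^+}^\times]\in\{1,2\}$. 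In $\cO_L^\times/W_L\cong\Z$ the image of $\varepsilon_p$ generates the image of $\cO_{L^+}^\times$, a subgroup of index $Q$, hence $\varepsilon_p\mapsto\pm Q$; but $\eta^2=i\,\varepsilon_p$ forces the image of $\varepsilon_p$ to be twice the (integer) image of $\eta$, hence even. This excludes $Q=1$, yields $Q=2$ and $\eta\mapsto\pm1$, so $\eta$ generates $\cO_L^\times/W_L$ and is a fundamental unit, proving the claim.

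The main obstacle is precisely the identity $2\varepsilon_p=\gamma^2$, equivalently $\sqrt{i\,\varepsilon_p}\in L$: once this square root is produced, the root-of-unity bookkeeping and the unit-index computation are formal. I would also note that this argument only uses $p\equiv 3\pmod 4$, the stronger congruence $p\equiv3\pmod8$ being required solely by the surrounding analysis of $\Cl(\Q(\sqrt[4]{p}))$ in Theorem~\ref{thm:classnumber}.
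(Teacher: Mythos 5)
Your proof is correct, and it takes a genuinely different route from the paper's. The paper quotes a result of Kubota-type (its reference \cite{MR1721726}) to reduce to the dichotomy $\kappa=\varepsilon_p$ or $\kappa=\sqrt{i\varepsilon_p}$, and then rules out the first case by showing that $F(\sqrt{i\varepsilon_p})/F$ is unramified at every finite place --- trivially at odd places, and at $2$ by using $p\equiv 3\pmod 8$ to identify $\Q_2(\sqrt p,\,i)$ with $\Q_2(\sqrt 3,\,i)$ and checking the explicit identity $\bigl(\tfrac{1+i-\sqrt3-\sqrt3 i}{2}\bigr)^2=i(2-\sqrt3)$ --- so that the oddness of $\Cl(F)$ from the preceding proposition forces the extension to be trivial. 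You instead produce the square root globally and by hand: the Pell descent on $(a-1)(a+1)=pb^2$ (with the parity of $a$ pinned down by the fundamentality of $\varepsilon_p$) yields $\gamma\in\Z[\sqrt p]$ with $\gamma^2=2\varepsilon_p$, whence $\eta=\tfrac{1+i}{2}\gamma$ satisfies $\eta^2=i\varepsilon_p$, and fundamentality follows from the Hasse unit index $Q\in\{1,2\}$ rather than from any class-number information. Your argument is more elementary and more self-contained (no appeal to the previous proposition, no local analysis at $2$), it exhibits $\sqrt{i\varepsilon_p}$ explicitly for every $p$ rather than only for $p=3$, and, as you note, it proves the statement under the weaker hypothesis $p\equiv 3\pmod 4$; the paper's approach, by contrast, recycles machinery it needs anyway for Theorem~\ref{thm:oddcardinality} (the odd class number of $F$ and the $\Q_2$-isomorphism trick). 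All the steps you leave implicit --- norm $+1$ of $\varepsilon_p$, $\cO_{L^+}=\Z[\sqrt p]$, $W_L=\mu_4$ for $p>3$, and the index computation in $\cO_L^\times/W_L\cong\Z$ --- check out.
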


\begin{proof} Suppose that $p\neq 3$, since this case is true by a
  computer check. By \cite[Applications 1]{MR1721726}, the units of
  $F=\Q(\sqrt{p},\sqrt{-1})$ are generated by $\{i,\kappa\}$, where
  $\kappa = \varepsilon_p$ or $\kappa = \sqrt{i\cdot \varepsilon_p}$
  (meaning that $\kappa$ is an element of $F$ whose square equals
  $i\cdot \varepsilon_p$). To prove the claim, it is enough to prove that
  $i \cdot \varepsilon_p$ is a square in $F$.

  The extension $\Q(\sqrt{p},\sqrt{-1})[\sqrt{i \cdot \varepsilon_p}]$ is at
  most quadratic, and is unramified at all finite odd primes
  (i.e. those not dividing $2$). If we can prove that the extension is
  also unramified at even primes, then the extension must be trivial
  (since the class number of $\Q(\sqrt{p},\sqrt{-1})$ is odd by the previous proposition).

  Since $p \equiv 3 \pmod 8$, there exists an isomorphism
  $\Phi: \Q_2(\sqrt{p}) \to \Q_2(\sqrt{3})$. The fact that the class
  number of $\Q(\sqrt{p})$ is odd also implies that the quadratic
  extension $\Q(\sqrt{p},\sqrt{\varepsilon_p})$ is ramified at $2$
  (since the fundamental unit has norm $1$). Then
  $\Q_2(\sqrt{p},\sqrt{\varepsilon_p})/\Q_2$ is biquadratic of
  conductor $2^8$. There are precisely two such extensions, which
  match $\Q_2(\sqrt{3},\sqrt{\varepsilon_3})$ and
  $\Q_2(\sqrt{3},\sqrt{-\varepsilon_3})$, where
  $\varepsilon_3 = 2-\sqrt{3}$ is a fundamental unit for
  $\Q(\sqrt{3})$ (see for example Jones-Roberts tables at
  \url{https://hobbes.la.asu.edu/courses/site/Localfields-index.html}). Then
  extending $\Phi$ to an isomorphism between
  $\Q_2(\sqrt{p},\sqrt{-1})$ and $\Q_2(\sqrt{3},\sqrt{-1})$ we can
  assume that $\Phi(\varepsilon_p) = \varepsilon_3$ up to squares (so
  it is enough to understand the case $p=3$).

  But actually $i \cdot (2-\sqrt{3})$ is a square in
  $\Q(\sqrt{3},\sqrt{-1})$ (since
  $\left(\frac{1 + i - \sqrt{3}-\sqrt{3}i}{2}\right)^2 =
  i(2-\sqrt{3})$), so if
  $\Q(\sqrt{p},\sqrt{-1})[\sqrt{i\varepsilon_p}]$ is not equal to
  $\Q(\sqrt{p},\sqrt{-1})$, the primes dividing $2$ are split (not
  ramified).
\end{proof}

\begin{thm}
 If $p\equiv 3 \pmod 8$ then $\Cl(\Q[\sqrt{-1},\sqrt[4]{p}])$ has odd cardinality.
\label{thm:oddcardinality}
\end{thm}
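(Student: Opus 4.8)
The plan is to apply Chevalley's ambiguous class number formula to the quadratic extension $M/L$, where $M=\Q(\sqrt{-1},\sqrt[4]{p})$ and $L=\Q(\sqrt{-1},\sqrt{p})$, reducing the oddness of $\#\Cl(M)$ to a parity count controlled by the two preceding propositions. Write $G=\Gal(M/L)=\langle s\rangle\cong\Z/2$ with $s(\sqrt[4]{p})=-\sqrt[4]{p}$. Since $G$ is a $2$-group acting on the finite abelian group $\Cl(M)$, the orbit decomposition gives $\#\Cl(M)\equiv\#\Cl(M)^{G}\pmod 2$, so it suffices to prove that the number of ambiguous ideal classes is odd. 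Chevalley's formula expresses this as
\[
  \#\Cl(M)^{G}=h_L\cdot\frac{\prod_{v}e_v}{[M:L]\,[\cO_L^\times:\cO_L^\times\cap N_{M/L}(M^\times)]},
\]
the product running over all places $v$ of $L$ with ramification index $e_v$ in $M/L$. Because the preceding proposition shows $h_L=\#\Cl(L)$ is odd, the right-hand side is odd exactly when $t=1+u$, where $t$ is the number of places of $L$ ramifying in $M/L$ and $2^{u}=[\cO_L^\times:\cO_L^\times\cap N_{M/L}(M^\times)]$ is the unit norm index.

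First I would pin down the ramified places $t$. The field $L$ is totally complex, so no Archimedean place ramifies in $M/L$. Away from $2p$ the extension $M=L(\sqrt{\sqrt{p}})$ adjoins the square root of a unit, hence is unramified. Since $p\equiv3\pmod4$ is inert in $\Q(\sqrt{-1})$ and ramifies in $L/\Q(\sqrt{-1})$, there is a single prime $\mathfrak{P}\mid p$ of $L$ at which $\sqrt{p}$ is a uniformizer, so adjoining $\sqrt{\sqrt{p}}$ ramifies there. It remains to analyse the unique prime $\mathfrak{q}\mid 2$ of $L$: using $p\equiv3\pmod8$ one has $L\otimes\Q_2\cong\Q_2(\sqrt{-1},\sqrt{3})$, the compositum of the ramified extension $\Q_2(\sqrt{-1})$ with the unramified extension $\Q_2(\sqrt{-3})$, and one must decide whether $M_{\mathfrak q}=L_{\mathfrak q}(\sqrt{\sqrt{p}})$ is ramified over $L_{\mathfrak q}$. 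This fixes the value of $t$.

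In parallel I would compute the unit norm index $u$. By Proposition~\ref{prop:units} one has $\cO_L^\times=\langle\sqrt{-1}\rangle\times\langle\eta\rangle$ with $\eta=\sqrt{\sqrt{-1}\,\varepsilon_p}$ and $\varepsilon_p$ a totally positive fundamental unit of $\Q(\sqrt{p})$; in particular $u\in\{0,1,2\}$, and $2^{u}$ measures how many of the classes of $\sqrt{-1}$ and $\eta$ in $\cO_L^\times/(\cO_L^\times\cap N_{M/L}M^\times)$ are nontrivial. By the Hasse norm theorem a global unit is a norm from $M$ precisely when it is a local norm at every place, and since $M/L$ is unramified outside $\{\mathfrak{P},\mathfrak{q}\}$ this reduces to local norm conditions at $\mathfrak{P}$ and at $\mathfrak{q}$: the condition at $\mathfrak{P}$ is a tame computation, while the condition at $\mathfrak{q}$ is governed by the explicit shape of $\eta$ and the congruence $p\equiv3\pmod8$.

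Finally I would combine the two computations and verify the identity $t=1+u$, which forces $\#\Cl(M)^{G}=h_L$ to be odd and hence $\#\Cl(M)$ odd. The main obstacle is the behaviour at the prime above $2$: both the ramification of $M/L$ at $\mathfrak{q}$ and the local norm conditions there are wildly ramified $2$-adic computations, and it is exactly here that the hypothesis $p\equiv3\pmod8$ and the precise fundamental unit $\sqrt{\sqrt{-1}\,\varepsilon_p}$ from Proposition~\ref{prop:units} enter; everything away from $2$ is either forced by the oddness of $h_L$ or is a routine tame calculation. As a consistency check, one expects the contributions of $\mathfrak{q}$ to the ramification count and to the unit index to cancel in $t-1-u$, leaving the single tame ramified prime $\mathfrak{P}$ over $p$ to balance the factor $[M:L]=2$.
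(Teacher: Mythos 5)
Your route is genuinely different from the paper's and, once completed, works. The paper argues by genus theory over $F=\Q(i,\sqrt{p})$: any unramified quadratic extension $L(\sqrt{\alpha})/L$ of $L=\Q(i,\sqrt[4]{p})$ descends to an abelian extension of $F$ of degree at most $4$; the oddness of $h_F$ reduces $\alpha$ to a unit of $F$ modulo squares, so $\alpha\in\{i,\kappa,i\kappa\}$ with $\kappa=\sqrt{i\varepsilon_p}$; and each candidate is then killed by explicit $2$-adic ramification checks in the two classes $p\equiv 3,11\pmod{16}$. You instead feed the same two inputs (oddness of $h_L$ for $L=\Q(i,\sqrt{p})$ and Proposition~\ref{prop:units}) into Chevalley's ambiguous class number formula for the quadratic extension $M/L$, which is more systematic and, as explained below, lets you avoid the prime $2$ altogether --- something the paper's proof cannot do.

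As written, however, your proposal stops short of the decisive verification $t=1+u$: you defer both the ramification of $M/L$ at $\mathfrak{q}\mid 2$ and the local norm conditions there, and you present these wild computations as the main obstacle. They are in fact unnecessary. Since $L$ is totally complex and $\mathfrak{P}=(\sqrt{p})$ is the unique prime of $L$ above $p$ (with $\sqrt{p}$ a uniformizer), the only places that can ramify in $M=L(\sqrt{\sqrt{p}})$ are $\mathfrak{P}$ (which always does) and $\mathfrak{q}$, so $t\in\{1,2\}$; and the product formula for Hasse norm symbols, together with the fact that units are automatically local norms at unramified places, gives $u\le t-1$. So it suffices to prove $u\ge 1$ by a single \emph{tame} computation at $\mathfrak{P}$: there $M_{\mathfrak{P}}/L_{\mathfrak{P}}$ is tamely ramified with residue field $\F_{p^2}$, and a unit is a local norm iff its residue is a square. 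Since $N(\varepsilon_p)=1$ forces $\varepsilon_p\equiv\pm1\pmod{\mathfrak{P}}$, the unit $\eta=\sqrt{i\varepsilon_p}$ satisfies $\bar\eta^2=\pm\bar\imath$, an element of order $4$, so $\bar\eta$ is a square in $\F_{p^2}^\times$ iff $16\mid p^2-1$; but $p\equiv 3\pmod 8$ gives $v_2(p^2-1)=3$, so $\eta$ is not a local norm at $\mathfrak{P}$. Hence $u\ge1$, which forces $t=2$ and $u=1$, and Chevalley's formula yields $\#\Cl(M)^{G}=h_L\cdot 2^{t-1-u}=h_L$, odd. This is exactly where the hypothesis $p\equiv3\pmod 8$ enters, replacing the paper's case-by-case checks at $2$; you should make this final step explicit rather than leaving it as a plan.
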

\begin{proof}
  In Proposition~\ref{thm:classnumber} replace $F$ by $\Q(\sqrt{-1},\sqrt{p})$ (whose
  class group has odd cardinality) and $L$ by
  $\Q(\sqrt{-1},\sqrt[4]{p})$. Let $\sigma \in \Gal(L/F)$ be the
  non-trivial element, defined by $\sigma(i) = i$,
  $\sigma(\sqrt[4]{p})=-\sqrt[4]{p}$. Since we are interested in
  understanding quadratic extensions, instead of working with the
  whole extension $H/L$, we can consider the subextension
  $H^{\Cl(L)^2}/L$ (whose Galois group is an elementary $2$-group) and
  the extension $\Gal(H^{\Cl(L)^2}/F)$.

  Since the class group of $F$ is odd, $\sigma(\id{a})\cdot \id{a}$
  equals the square of an ideal of $F$, so $\sigma(\id{a})$ has the
  same class as $\id{a}^{-1}$ in $\Cl(L)/\Cl(L)^2$. Then the same
  argument as in the previous proposition proves that
  $L(\sqrt{\alpha})$ is an extension of degree at most $4$ of $F$
  unramified outside $2p$ whose Galois group
  $\Gal(L(\sqrt{\alpha})/F)$ is isomorphic to either $\Z/2$ or
  to $\Z/2 \times \Z/2$ (it cannot be cyclic of order $4$ for the same
  reason as before). Since the class group of $F$ is odd, we can
  assume that $(\alpha)$ is only supported at the prime ideal
  $(\sqrt{p})$ and at an ideal dividing $2$. The fact that
  $L(\sqrt{\alpha})/L$ is unramified at primes dividing $2$ together
  with the fact that the quadratic extension $L/F$ has conductor
  exponent $2$ implies that $\alpha$ cannot be divisible by primes
  dividing $2$. In particular, $\alpha \in \{ u, \sqrt{p}u\}$ for $u$
  a unit of $F$, and since we are interested in the extension
  $L(\sqrt{\alpha})/L$ (and $\sqrt[4]{p} \in L$), it is enough to
  understand the case when $\alpha$ is a unit up to squares.

  Let $\varepsilon_p$ denote the fundamental unit of $\Q(\sqrt{p})$.
  Supposes that $p\neq 3$ (as in this case the class number can be
  computed and prove the veracity of the statement), so that the only
  roots of unity in $F$ are the fourth roots of unity. By
  Proposition~\ref{prop:units}, the units of $F$ are generated by
  $\{i,\kappa\}$, where $\kappa = \sqrt{i\cdot \varepsilon_p}$. Then up to
  squares, we can restrict to the case
  $\alpha \in \{i, \kappa, i\kappa\}$.

  Start assuming that $p \equiv 3 \pmod{16}$. Then since $p/3$ is a
  fourth power in $\Q_2$, there is an isomorphism
  $\Phi:\Q_2(\sqrt[4]{p}) \to \Q_2(\sqrt[4]{3})$ and also an
  isomorphism between $\Q_2(\sqrt[4]{p},\sqrt{-1})$ and
  $\Q_2(\sqrt[4]{3},\sqrt{-1})$.  The extension
  $\Q_2(\sqrt[4]{-1},\sqrt[4]{3})/\Q_2(\sqrt{-1},\sqrt[4]{3})$ is
  ramified, so $\alpha \neq i$.

  The same proof of the previous proposition implies that
  $\Phi(\varepsilon_p)$ equals $\varepsilon_3$ up to squares in
  $\Q_2(\sqrt{3},\sqrt{-1})$.
  Then we proceed as follows:
  \begin{itemize}
    
  \item Run over all elements of $\cO_F$ modulo $4$, and keep only the
    elements $\beta$ whose square equals $i\cdot(2-\sqrt{3})$ modulo
    $4$.
    
  \item  For each such element $\beta$, check whether the extension
  $F[\sqrt{\beta}]/F$ is ramified or not.
  \end{itemize}
  It turns out that the first search produces sixteen values of
  $\beta$, and for all of them the extension is ramified, so
  $\alpha \neq \kappa$. We apply the same strategy to $i\cdot \kappa$
  and get no unramified extension either, so $\alpha \neq i\kappa$
  deducing that there is no non-trivial extension of $L$ as claimed.

  We apply the same check when $p \equiv 11 \pmod {16}$, taking as
  fundamental unit $\varepsilon_{11} = 10 - 3\sqrt{11}$, obtaining no
  unramified extension of $\Q_2(\sqrt{11},\sqrt{-1})$, finishing the proof.
\end{proof}

\begin{proof}[Proof of Theorem~\ref{thm:classnumber}]
  Let $L/\Q(\sqrt[4]{p})$ be a quadratic unramified
  extension. Then the extension
  $L\cdot \Q(\sqrt[4]{p},\sqrt{-1})/\Q(\sqrt[4]{p},\sqrt{-1})$ is
  unramified of degree at most $2$, but by the previous result there
  is no non-trivial such an extension, so
  $L = \Q(\sqrt[4]{p},\sqrt{-1})$, which ramifies at both real
  infinite places of $\Q(\sqrt[4]{p})$.
\end{proof}

\begin{rem}
  With a little more effort, one can prove a similar result for
  $a=-2p$, with $p\equiv 3 \pmod 8$.
\end{rem}

Recall that our goal is to compute the value $\Cl_*(A_\Q,\cC(-p))[2]$.  The
marked place $\tilde{v}$ attached to the infinity place of $\Q$
corresponds to the real root $-\sqrt[4]{p}$. Following the notation of
Section~\ref{s: archimedean places}, the extension
$\Q(\sqrt[4]{p}, \sqrt{-1})$ ramifies at $\tilde{v}$ and at $v_1'$, so
it does not correspond to an element of $\Cl_*(A_\Q,\cC(-p))$ hence
$\Cl_*(A_\Q,\cC(-p))[2] = 1$. Then (\ref{eq:bound1}) gives
\[
  0 \le \dim_{\F_2}\Sel_2(\Jac(\cC(-p))) \le 2+1=3.
\]
This already improves the upper bound of Theorem~\ref{Jed} from $5$ to $3$.
We know that the Selmer group is non-trivial
(due to the point of order two on $\Jac(\cC(-p))$), so actually
\begin{equation}
  \label{eq:bdexample}
  1 \le \dim_{\F_2}\Sel_2(\Jac(\cC(-p))) \le 3.  
\end{equation}
This implies that the rank of the
surface belongs to the set $\{0,1,2\}$. If we can prove that actually
the root number of our family is $-1$, then the parity conjecture
implies that its rank is odd hence
(assuming the validity of the parity conjecture) it must be one.

\subsection{On the root number of
\texorpdfstring{$\Jac(\cC(-p))$}{Jac(C(-p))}}
The main goal of the present section is to prove the following result.

\begin{thm}
  Assuming the parity conjecture, the root number of $\Jac(\cC(-p))$
  is $-1$ for all primes $p \equiv 3 \pmod 8$. In particular,
  $\Jac(\cC(-p))$ has rank $1$ for all such primes.
  \label{thm:rootnumber}
\end{thm}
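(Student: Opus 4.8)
The plan is to compute the global root number through the complex multiplication of the surface. By Remark~\ref{rem:Hecke-character} the $L$-function of $\Jac(\cC(-1))$ equals $L(\chi,s)$ for an algebraic Hecke character $\chi$ of $K=\Q(\zeta_8)$ of infinity type $(1,0),(1,0)$ and conductor $(2)$; equivalently the associated $\ell$-adic representation is $\Ind_{\Gal_K}^{\Gal_\Q}\chi$. Since the curves $\cC(a)$ are octic twists of one another through the order-$8$ automorphism $(x,y)\mapsto(\zeta_4 x,\zeta_8 y)$, the surface $\Jac(\cC(-p))$ corresponds to the twisted Hecke character $\chi_p=\chi\cdot\xi_p$, where $\xi_p$ is the finite-order character of $K$ attached by Kummer theory to $\sqrt[8]{p}$ (the octic twist carrying $\cC(-1)$ to $\cC(-p)$). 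The key point is that $\xi_p$ leaves the infinity type unchanged and is ramified only at the primes of $K$ above $2$ and above $p$.

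Because the completed $L$-functions satisfy $\Lambda(\Ind_{\Gal_K}^{\Gal_\Q}\chi_p,s)=\Lambda(\chi_p,s)$, their functional equations coincide and the root number of the surface equals that of the Hecke character, $W(\Jac(\cC(-p)))=\prod_{v}W_v(\chi_p)$, the product running over the places $v$ of $K$. This identity makes the root number unconditional, so the parity conjecture enters only at the very last step. I would then clear away all trivial contributions: at the two complex places the local root number is governed solely by the infinity type $(1,0),(1,0)$, hence is a fixed root of unity independent of $p$, while at every finite place of $K$ not lying over $2$ or $p$ the character $\chi_p$ is unramified and $W_v=1$. This leaves only the places above $2$ and above $p$.

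For the places above $2$ I would exploit that $p\equiv 3\pmod 8$ forces the $2$-adic isomorphisms $\Q_2(\sqrt[4]{p})\cong\Q_2(\sqrt[4]{3})$ when $p\equiv 3\pmod{16}$ and $\Q_2(\sqrt[4]{p})\cong\Q_2(\sqrt[4]{11})$ when $p\equiv 11\pmod{16}$, exactly as in the proof of Theorem~\ref{thm:oddcardinality}. Thus the $2$-adic components of $\chi_p$, and the corresponding local root numbers, depend only on $p$ modulo $16$ and can be fixed by the single representative computations $p=3$ and $p=11$. For the place $p$, the congruence $p\equiv 3\pmod 8$ means $p$ has order $2$ in $(\Z/8)^\times$, so $p$ splits in $K$ as $\mathfrak{p}_1\mathfrak{p}_2$ with residue degree $2$; there $\chi$ is unramified and $\xi_p$ is Kummer-ramified, so each $W_{\mathfrak{p}_i}(\chi_p)$ is a Gauss-sum root number that I would evaluate explicitly. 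Assembling the archimedean, the $2$-adic and the $p$-adic factors then yields $W(\Jac(\cC(-p)))=-1$.

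The main obstacle is the explicit evaluation of the ramified local root numbers at the primes above $2$ and above $p$: one must normalize the Gauss sums consistently and verify that all dependence on $p$ collapses onto its class modulo $16$, so that the representative checks for $p\in\{3,11\}$ genuinely determine the sign for every $p\equiv 3\pmod 8$. Granting the value $W=-1$, the conclusion is immediate: by~(\ref{eq:bdexample}) and the square order of the Tate--Shafarevich group (Poonen--Stoll) the Mordell--Weil rank of $\Jac(\cC(-p))$ lies in $\{0,1,2\}$, and the parity conjecture together with root number $-1$ forces this rank to be odd, hence equal to $1$.
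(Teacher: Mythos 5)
Your overall strategy --- expressing $L(\Jac(\cC(-p)),s)$ as $L(\chi\theta,s)$ for a Hecke character of $\Q(\zeta_8)$ twisted by the octic Kummer character, reducing the root number to local $\varepsilon$-factors, evaluating the primes above $p$ by Gauss sums, and handling the place $2$ by congruence representatives --- is the same as the paper's. But there is a concrete error in your treatment of the place $2$. The twisting character is the \emph{octic} character of $F=\Q(\zeta_8)$ cutting out $F(\sqrt[8]{p})$; its local component at the unique prime $\id{q}_2$ above $2$ is governed by the class of $p$ in $F_{\id{q}_2}^\times/(F_{\id{q}_2}^\times)^8$, not by the quartic field $\Q_2(\sqrt[4]{p})$ that appears in the class-group argument of Theorem~\ref{thm:oddcardinality}. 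Since $(\Z_2^\times)^8=1+32\Z_2$, the congruence $p\equiv q\pmod{16}$ does not make $p/q$ an eighth power $2$-adically, and your claim that the $2$-adic component of $\chi_p$ ``depends only on $p$ modulo $16$'' is unjustified. This is why the paper works modulo $32$: the hypothesis $p\equiv q\pmod{32}$ guarantees $\Q_2(\zeta_8,\sqrt[8]{p})\simeq\Q_2(\zeta_8,\sqrt[8]{q})$, so the local root numbers at $2$ agree, and one needs the four representatives $q=3,11,19,59$ (covering the classes $3,11,19,27$ modulo $32$), not just $p=3$ and $p=11$.

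The second problem is that the two computations you flag as ``the main obstacle'' are precisely the content of the proof, and one of them cannot be carried out as you describe. For the primes above $p$ the paper does evaluate the Gauss sums: the key inputs are the relation $\theta_{\id{p}'}(b)=\overline{\theta_{\id{p}}(\widetilde{\sigma_7}(b))}$ coming from the action of $\Gal(F/\Q)$ on the Artin map, the value $\theta_{\id{p}}(-1)=-1$ (which uses $p\equiv 3\pmod 8$ and produces the sign $-1$), and a cancellation of the remaining unramified factors against the contribution at the auxiliary prime $q$ via the product formula. For the place $2$, however, the paper explicitly declines to compute the local $\varepsilon$-factor (``the local computation at primes dividing $2$ is very delicate''); instead it pins down the root number of each representative $\cC(-q)$ by computing its $2$-Selmer group, invoking Poonen--Stoll to get the rank, and \emph{assuming the parity conjecture}. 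So, contrary to your assertion that the root number comes out unconditionally and parity ``enters only at the very last step,'' in the paper's argument (and in any version of yours that does not actually evaluate $\varepsilon_{\id{q}_2}$) the parity conjecture is already needed to anchor the base cases. If you want to keep your plan you must either supply the genuinely delicate $2$-adic $\varepsilon$-factor computation, or adopt the paper's trick --- in which case you need the modulo-$32$ classes and all four representatives.
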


\begin{proof}
  Let $F=\Q(\zeta_8)$ the field containing the eighth roots of
  unity. The Galois group $\Gal(F/\Q)$ is isomorphic to
  $\Z/2 \times \Z/2$ and consists of the maps
  $\sigma_i : \zeta_8 \to \zeta_8^i$, for $i \in (\Z/8)^\times$. Note
  that the isomorphism between $\Gal(F/\Q)$ and $(\Z/8)^\times$ is
  canonical (i.e. it does does not depend on the choice of root of
  unity). The fixed field of each map is given in Table~\ref{table:fixed}.
  \begin{table}
  \begin{tabular}{|l|c||l|c|}
    \hline
    Map & Fixed Field & Map & Fixed Field\\
    \hline\hline
    $\sigma_1$ & $\Q$ & $\sigma_3$ & $\Q(\sqrt{-2})$\\
    \hline
    $\sigma_5$ & $\Q(\sqrt{-1})$ & $\sigma_7$ & $\Q(\sqrt{2})$\\
    \hline
  \end{tabular}
  \caption{\strut Fixed fields of the maps $\sigma_i$}
  \label{table:fixed}
  \end{table}
  Over the field $F$ the endomorphism ring of our surface
  $\Jac(\cC(p))$ contains $\Z[\zeta_8]$ (of rank $4$ over $\Z$), hence
  our surface has complex multiplication over $\Q(\zeta_8)$ (the whole
  endomorphism algebra equals $M_2(\Z[\sqrt{-2}])$ as proven in
  Corollary~\ref{coro:splitting}). As explained in
  Remark~\ref{rem:Hecke-character}, there exists an explicit Heche
  character $\chi$ of infinity type $(1,0),(1,0)$ such that
\[
L(\cC(-1),s)=L(\chi,s).
\]
Let $\theta_a$ denote the order eight Hecke character corresponding to
the extension $L=F(\sqrt[8]{a})/F$. Then the surface $\Jac(\cC(a))$ is
the ``twist'' of $\Jac(\cC(-1))$ by $\theta_a$ (since our surface contains
the eighth roots of unity in its endomorphism ring, it makes sense to
twist by an order $8$ character). Then
\[
L(\cC(a),s)=L(\chi \theta_a,s),
\]
so it is enough to compute the root number of $\chi \theta_a$ for each
prime number $p$ dividing $2a$. The problem is that the local
computation at primes dividing $2$ is very delicate, so we avoid this
issue with the following trick: restrict to the case $a=-p$ is an odd
prime number (our case of interest).
\begin{itemize}
\item For each residue class of $p$
modulo $32$, compute the root number of $\cC(q)$ for a particular
representative $q$ of the congruence class via computing the rank of
$\cC(q)$ (using Magma) and assuming the parity conjecture.
\item  If $p$ is
another prime congruent to $q$ modulo $32$, the extension
$F(\sqrt[8]{p/q})/F$ is unramified at $2$, so the surfaces $\cC(p)$
and $\cC(q)$ differ by an octic twist whose conductor is odd (only
ramified at primes dividing $p$ and $q$). Compute how the root number
varies under such a twist.
\item Apply the previous steps to the primes
$q=3, 11, 19$ and $59$ (since we assumed $a \equiv 5 \pmod 8$).
\end{itemize}

Start with the case $q=3$.  Using Magma we compute the $2$-Selmer
group of $\Jac(\cC(-3))$ and verify that it is isomorphic to
$\Z/2\times \Z/2$. Furthermore, since our curve has a rational point,
its set of deficient primes (as in \cite[Corollary 12]{Poonen-Stoll}) is empty, so
the order of its Tate-Shafarevich group is a square, hence
trivial. Since the 2-torsion of $\Jac(\cC(-3))$ is $\Z/2$,
this implies that $\Jac(\cC(-3))$ has rank $1$, hence
(assuming the parity conjecture) the sign of the functional equation
of $\Jac(\cC(-3))$ is $-1$.

Let $p$ be a prime congruent to $3$ modulo $32$. Let $a=p/3$ and let
$\theta_a$ be the order $8$ character of $F$ corresponding to the
extension $F(\sqrt[8]{a})/F$ (so its conductor is only divisible by
primes dividing $3$ and $p$). Let $\chi$ be the Hecke character attached to $\cC(-3)$, so that
\[
L(\cC(-p),s) = L(\chi \theta_a,s).
  \]
\vspace{1pt}

\noindent \emph{The local root number variation at primes dividing
  $p$}. The prime $p$ factors as a product of two primes
$\id{p} \id{p}'$ in $F$ (each of them with inertial degree $2$). Let
$\cO_{\id{p}}$ denote the completion of $\cO_F$ at $\id{p}$. Locally
at the prime $\id{p}$, $\theta_a$ has conductor $\id{p}$ and order
$8$, so it factors through a character
\[
\theta_{\id{p}} : \cO_{\id{p}}^\times \to \F_{p^2}^\times \to \C^\times,
\]
sending a generator to an eighth root of unity. Let $\psi$ be an
additive unramified character of $F_{\id{p}}$ (i.e. $\psi$ restricted
to $\cO_{\id{p}}$ is trivial, but its restriction to
$\frac{\cO_{\id{p}}}{p}$ is not). Let $dx$ be a Haar measure on
$F_{\id{p}}$ such that $\int_{\cO_{\id{p}}} dx = 1$ (so the measure is
self dual with respect to the additive character $\psi$). Then the
local root number of $\chi$ at $\id{p}$ equals $1$ (by
\cite[(3.4.3.1)]{MR0349635}, since the character is unramified at
$\id{p}$), while the local root number of $\theta_a\chi$ equals
\begin{equation}
  \label{eq:rootnumber}
\varepsilon(\chi_{\id{p}}\theta_{\id{p}},\psi,dx)= \chi_{\id{p}}(p) \int_{p^{-1}\cO_{\id{p}}^\times} \theta_{\id{p}}^{-1}(x)\psi(x) dx.
\end{equation}
Since $\theta_{\id{p}}$ has conductor exponent $1$, the later equals
\begin{equation}
  \label{eq:prootnumber}
\chi_{\id{p}}(p)\theta_{\id{p}}(p) \sum_{b \in \F_{p^2}}\theta_{\id{p}}(b)\psi\left(\frac{b}{p}\right).
\end{equation}
This Gauss sum has very nice properties, namely (see \cite[Chapter 1]{MR1625181}):
\begin{itemize}
\item Its absolute value equals $p$.
  
\item $\overline{\sum_{b \in \F_{p^2}}\theta_{\id{p}}(b)\psi\left(\frac{b}{p}\right)} = \theta_{\id{p}}(-1)\sum_{b \in \F_{p^2}}\overline{\theta_{\id{p}}(b)}\psi\left(\frac{a}{p}\right)$.
\end{itemize}
The same computations applies to the prime $\id{p}'$. The
map $\sigma_7$ sends the ideal $\id{p}$ to $\id{p}'$ and vice-versa. In
particular, it induces an isomorphism
$\widetilde{\sigma_7}:\cO_{\id{p}} \to \cO_{\id{p}'}$. Via
$\widetilde{\sigma_7}$ we define an additive character and a Haar
measure on $F_{\id{p}'}$ (by composing the ones for $\cO_{\id{p}}$
with the isomorphism $\sigma_7$). We claim that under the isomorphism
$\widetilde{\sigma_7}$ the following relation holds:
\begin{equation}
  \label{eq:conj}
\theta_{\id{p}'}(b) = \theta_{\id{p}}(\widetilde{\sigma_7}(b))^{-1}=\overline{\theta_{\id{p}}(\widetilde{\sigma_7}(b))}.  
\end{equation}
Recall that in general, if $L/F/M$ is a tower of Galois field
extensions, there is an action of $\Gal(L/M)$ on $\Gal(L/F)$ (by
conjugation) coming from the fact that the subgroup is normal. If
furthermore, $\Gal(L/F)$ is abelian, then we get an action of the
quotient $\Gal(F/M)$ on $\Gal(L/F)$. In our particular case,
$L= \Q(\sqrt[8]{p/3})$, $F = \Q(\zeta_8)$ and $M = \Q$. Since $L/F$ is
abelian, there is a well defined Artin map
$\Art: \Frac(F) \to \Gal(L/F)$ (where $\Frac(F)$ corresponds to the
group of fractional ideals of $\cO_F$), and the Artin map is compatible with the action of $\Gal(F/\Q)$ on $\Frac(F)$ in the sense that for any $\sigma \in \Gal(F/\Q)$, 
\[
\Art(\sigma(\id{p})) = \sigma^{-1}\Art(\id{p}) \sigma.
  \]
It is easy to verify that for any ideal $\id{a}$, the Artin map satisfies
\begin{equation}
  \label{eq:artin}
  \Art(\sigma_7(\id{a})) = \Art(\id{a})^7.  
\end{equation}
Let $\alpha \in \cO_F$ be such that:
\begin{itemize}
\item $\alpha \equiv 1 \pmod {\id{p}}$,
  
\item $\alpha \equiv g \pmod {\id{p}'}$, for $g$ a generator of $(\cO_{\id{p}'}/\id{p}')^\times$,
  
  
\item $\alpha \equiv 1 \pmod 3$.
\end{itemize}
Since $\theta_a$ only ramifies at primes dividing $3p$ (with conductor
exponent $1$), then
\[
  \theta_a((\alpha)) = \theta_{\id{p}'}(\alpha)=\theta_{\id{p}'}(g).
\]
On the other hand,
\[
  \sigma_7\,\theta_a((\alpha)) = \theta_a(\sigma_7(\alpha)) =
  \theta_{\id{p}}(\widetilde{\sigma_7}(\alpha)).
\]
But
equation~(\ref{eq:artin}) implies that
$\sigma_7 \, \theta_a = \theta_a^7$, so the claim follows (because
$\theta_a^{-1} = \theta_a^7$).

\vspace{2pt}

The $p$-th epsilon factor of $\chi \theta_a$ at $p$ equals the product
of the two epsilon factors at $\id{p}$ and $\id{p}'$, namely
\[
\chi_{\id{p}}(p)\theta_{\id{p}}(p)\chi_{\id{p}'}(p)\theta_{\id{p}'}(p) \left( \sum_{b \in \F_{p^2}}\theta_{\id{p}}(b)\psi\left(\frac{b}{p}\right)\right)\left( \sum_{b \in \F_{p^2}}\theta_{\id{p}'}(b)\psi\left(\frac{b}{p}\right)\right).
  \]
Formula~(\ref{eq:conj}) together with the two
properties of our Gauss sum and the fact that $\theta_{\id{p}}(-1)=-1$
(since $p \equiv 3 \pmod 8$, $v_2(p^2-1)$ = 3) imply that the root number at $p$ equals
\begin{equation}
  \label{eq:sign2}
-\chi_{\id{p}}(p) \chi_{\id{p}'}(p)\theta_{\id{p}}(p)\theta_{\id{p}'}(p) p^2.  
\end{equation}
Since the character $\theta_a$ is unramified at $2$, the
product formula implies that
\[
1 = \theta_{\id{p}}(p) \theta_{\id{p}'}(p) \theta_{\id{q}_3}(p)\theta_{\id{q}'_3}(p),
\]
where $3 = \id{q}_3 \id{q}'_{3}$ over $F$.

The same proof as before gives that
$\theta_{\id{q}_3'}(b)=\overline{\theta_{\id{q}_3}(\widetilde{\sigma_7}(b))}$
for any $b \in \cO_{\id{q}_3}$. Since $\widetilde{\sigma_7}(p)=p$,
$\theta_{\id{q}_3}(p)\theta_{\id{q}'_3}(p)=1$. Then the local root
number variation at $p$ equals
\begin{equation}
  \varepsilon_p := -\chi_{\id{p}}(p) \chi_{\id{p}'}(p) p^2.  
\end{equation}

\vspace{1pt}
\noindent \emph{The local root number variation at primes dividing
  $3$}. The situation is similar to the previous one, but now if $\id{p}_3$ and $\id{p}_3'$ are the two primes dividing $3$, then $\chi_{\id{p}_3}$ is a ramified character, while $\theta_{\id{p}_3}\chi_{\id{p}_3}$ is not. Hence the root number variation equals
\begin{equation}
  \varepsilon_3 := -\chi_{\id{p}_3}(3)^{-1} \chi_{\id{p}'_3}(3)^{-1} 3^{-2}.  
\end{equation}

\vspace{1pt}
\noindent \emph{The local root number variation at primes dividing
  $2$}. The prime $2$ ramifies completely in the extension $F/\Q$. Let
$\id{q}_2$ denote the unique prime ideal of $F$ dividing it.  Our
hypothesis $p \equiv 3 \pmod {32}$ implies that
$\Q_2(\zeta_8,\sqrt[8]{p}) \simeq \Q_2(\zeta_8,\sqrt[8]{3})$, so the
root number at $2$ is the same for both varieties. 

To finish the proof, we need to verify the equality
\[
\chi_{\id{p}}(p) \chi_{\id{p}'}(p) p^2\chi_{\id{p}_3}(3)^{-1} \chi_{\id{p}'_3}(3)^{-1} 3^{-2}=1.
\]

The surface $\Jac(\cC(-3))$ has conductor $2^{11}\cdot 3^4$ (this can
be computed using Magma). Since $\delta(F/\Q) = 8$, formula
(\ref{eq:conductor}) implies that the conductor of $\chi_{\id{q}_2}$
equals $3$, so it is trivial at $a=p/3$. Then the product formula for
the character $\chi$ at the element $a$ implies that
 \[
1 = \chi_{\id{q}_3}(p) \chi_{\id{q}_3'}(p) \chi_{\id{p}}(p) \chi_{\id{p}'}(p) p^2/(\chi_{\id{q}_3}(3) \chi_{\id{q}_3'}(3) \chi_{\id{p}}(3) \chi_{\id{p}'}(3) 3^2).
\]
The equality
$\chi_{\id{q}_3}(p) \chi_{\id{q}_3'}(p) = 1 =
\chi_{\id{p}}(3)\chi_{\id{p}'}(3)$ (which follows from an argument
similar to the one applied to $\theta_a$) proves that the root number
of $\Jac(\cC(-3))$ equals that of $\Jac(\cC(-p))$.  Then sign of the
functional equation of $\Jac(\cC(-p))$ also equals $-1$ and hence
(assuming once again the parity conjecture) its rank is odd. But it
belongs to the set $\{0, 1, 2\}$, so it equals $1$.

Similarly, we compute the rank of $\Jac(\cC(q))$, for
$q \in \{11, 19, 59\}$. In all cases its $2$-Selmer group has rank
$2$, and their conductors equal $2^{11}\cdot q^4$. The same proof
applies to these cases mutatis mutandis.
\end{proof}

\section{Examples}
\label{section:examples}
The following examples have been computed using Magma~\cite{Magma}.

\subsection{The genus 2 curve of conductor 277}
Consider the hyperelliptic curve
\[
  \cC:y^2 + (x^3+x^2+x+1)y = -x^2-x
\]
with LMFDB label
\href{http://www.lmfdb.org/Genus2Curve/Q/277/a/}{\textsf{\textup{277.a}}}. This
corresponds to the semistable abelian surface of smaller
conductor. Its modularity was proven in \cite{MR3981316}. Via a
standard change of variables, it can we written in the form
\[
\cC: y^2 = x^6 + 2x^5 + 3x^4 + 4x^3 - x^2 - 2x + 1.
  \]
  The polynomial $x^6 + 2x^5 + 3x^4 + 4x^3 - x^2 - 2x + 1$ has a
  rational root (namely $x=-1$) so a change of variables sending $-1$
  to infinity transforms the equation into the quintic
  \[
    \cC: y^2 = x^5 + 10x^4 + 8x^3 + 16x^2 -48x + 32.
  \]
  Over $\Q_2$ the polynomial is irreducible (since the prime $2$ is
  completely ramified in the degree $5$ extension
  $A_\Q=\Q[x]/(x^5 + 10x^4 + 8x^3 + 16x^2 -48x + 32)$), so 
  (\dag.i) holds at the prime $2$. The quotient of the
  polynomial discriminant by the field discriminant equals $2^{28}$,
  so (\dag.i) holds for all odd primes and we are in the
  hypothesis of our main theorem. The set of ramified primes of $A_\Q$
  over $\Q$ is $\{2,277\}$. For all primes $p$ which are inert in $A_\Q$,
  Lemma~\ref{lemma:twist} implies that the quadratic twist $\cC(p)$
  also satisfies the hypothesis of Theorem~\ref{thm:main}. Since the
  narrow class group of $A_\Q$ is one, we get that for all primes inert in $A_\Q/\Q$, 
  \[
0 \le \Sel_2(\Jac(\cC(p))) \le 2.
\]
The Galois group $\Gal(A_K/\Q) \simeq S_5$, so the density of inert
primes equals $1/5$. The first inert primes (up to $100$) are
$\{3, 7, 13, 29, 41, 59\}$. We computed the $2$-Selmer rank of all
quadratic twists by inert primes up to $100.000$, and in all cases it
equals $0$.
    
\subsection{Examples with \texorpdfstring{$K=\Q$}{K=ℚ}}
\begin{exm}
  Consider the hyperelliptic curve
  \[
\cC: y^2 = x^5+x^2+1.
    \]
    The extension $L = \Q[x]/x^5+x^2+1$ is monogenic, and the class of
    $x$ generates the ring of integers, hence (\dag.i) is
    satisfied at all primes. The narrow class number of $L$ is one,
    hence Theorem~\ref{thm:main} implies that
    \[
 0 \le \dim_{\F_2}\Sel_2(J) \le 2.
\]
One can check (in magma) that the $2$-Selmer group is actually
isomorphic to $\Z/2 \times \Z/2$, so the upper bound is attained.

The prime $31$ is inert in the extension $L/\Q$, so we are in the
hypothesis of Lemma~\ref{lemma:twist}. In particular the same bound
applies to the quadratic twist of $\cC$ by $31$, corresponding to the
curve with equation
\[
\cC(31): y^2 = x^5 + 31^3 x^2 + 31^5.
\]
It is easy to verify (in magma) that the Jacobian of such a curve has
trivial $2$-Selmer group. In particular, the lower bound is also
attained. The twist by the prime $101$ corresponds to a curve whose
$2$-Selmer group has rank one. In particular, all intermediate values
are also obtained.

\end{exm}

\begin{exm}
  Let us study the case of some genus $5$ curves. Most of the examples
  were obtained via choosing a random degree 11 polynomial (with
  coefficients in $[-5,5]$) and studying the hyperelliptic curves they
  define. Consider first the hyperelliptic curve
  \[
\cC: y^2 = x^{11} - 3x^9 - 3x^8 + x^7 - x^5 - x^4 - 2x^3 + x^2 - 5x - 1.
\]
Let $L/\Q$ denote the degree $11$ extension given by the polynomial
$x^{11} - 3x^9 - 3x^8 + x^7 - x^5 - x^4 - 2x^3 + x^2 - 5x - 1$.  Once
again, the class of $x$ generates the ring of integers of $L$, so the
hypothesis (\dag.i) is always satisfied. The narrow class group
of $L$ equals one, hence Theorem~\ref{thm:main} gives the bounds
$0 \le \dim_{\F_2}\Sel_2(J) \le 5$. The Jacobian of $\cC$ has $2$-Selmer
group of rank $0$ (as can be verified with Magma), hence the lower
bound is obtained. The prime $2$ is inert in the extension $L/\Q$,
hence one can study twists of $\cC$ by any odd prime inert in $L/\Q$
(such primes always exist). An interesting phenomena is that we
computed all quadratic twist for such primes up to $2000$, and in all cases the
twisted curve has trivial $2$-Selmer group.

Consider now the hyperelliptic curve with equation
\[
\cC: y^2 = x^{11} + x^4 + x^2 + x + 1,
\]
and let $L/\Q$ denote the degree $11$ extension given by the
(irreducible) polynomial $x^{11} + x^4 + x^2 + x + 1$. The ring of
integers is generated by the class of $x$, so we are in the hypothesis
of Theorem~\ref{thm:main}. The narrow class group of $L$ equals $1$,
so once again we obtain the bound $0 \le \dim_{\F_2}\Sel_2(J) \le 5$.

The Jacobian of $\cC$ has $2$-Selmer group isomorphic to $(\Z/2)^5$,
so the upper bound is attained. The quadratic twist by $\sqrt{13}$ has
$2$-Selmer group of rank $3$, while the quadratic twist by
$\sqrt{149}$ has $2$-Selmer group of rank $4$. All quadratic twist up
to $2000$ have $2$-Selmer rank in $\{3,4,5\}$.

Finally, consider the hyperelliptic curve
\[
  \cC: y^2 = x^{11} + 4x^{10} + 4x^9 - 4x^8 - 2x^7 - 2x^6 - 3x^5 + 4x^4 - 3x^3 - 3x^2 + 2x - 3.
\]
It satisfies exactly the same properties as the previous ones. The
Jacobian of the curve has $2$-Selmer group isomorphic to $\Z/2$, and
its quadratic twist by $\sqrt{23}$ (an inert prime in $L/\Q$) has
$2$-Selmer group of rank $2$. All quadratic twists by prime numbers up
to $2000$ have $2$-Selmer group of rank $1$ or $2$.

In particular, these three examples (and some twists) correspond to
genus five hyperelliptic curves whose Jacobians have $2$-Selmer group isomorphic to
all the possible groups predicted by our main result.
\end{exm}

\subsection{Examples with \texorpdfstring{$K=\Q(\sqrt{5})$}{K=ℚ(√5)}}

\begin{exm}
  Let $K=\Q(\sqrt{5})$ and consider the hyperelliptic curve
  \[
    \cC: y^2 =x^5+x^4+\sqrt{5}x^2+x+1.
\]
Let $L$ denote the extension $A_K$, a degree $10$ extension over
$\Q$. The narrow class group of $L$ is trivial. The prime $2$ is inert
in $L/\Q$ so (\dag.i) is satisfied at $2$. The discriminant of the degree $10$ extension differs
from the discriminant of
$(x^5+x^4+\sqrt{5}x^2+x+1)(x^5+x^4-\sqrt{5}x^2+x+1)$ by a power of $2$; in
particular, (\dag.i) is also satisfied for all odd primes of $K$.
We are
in the hypothesis of our main result, i.e. $0 \le \dim_{\F_2}\Sel_2(J) \le 4$. The $2$-Selmer group of $\cC$
has rank $2$, the quadratic twist by $\sqrt{23}$ has $2$-Selmer group
of rank $1$, while the quadratic twist by $\sqrt{673}$ has $2$-Selmer
group of rank $3$.

On the other hand, the hyperelliptic curve
\[
  \cC: y^2 = x^5+7x^4+\sqrt{5}x^2+3x+1,
\]
satisfies the same properties as the previous one, but has $2$-Selmer group of rank $4$ (so once again the bound is sharp).
\end{exm}

\bibliographystyle{alpha}
\bibliography{biblio}

\newcommand{\etalchar}[1]{$^{#1}$}
\begin{thebibliography}{DLRW20}

\bibitem[Azi99]{MR1721726}
Abdelmalek Azizi.
\newblock Unit\'{e}s de certains corps de nombres imaginaires et ab\'{e}liens
  sur {$\mathbf Q$}.
\newblock {\em Ann. Sci. Math. Qu\'{e}bec}, 23(1):15--21, 1999.

\bibitem[BCP97]{Magma}
Wieb Bosma, John Cannon, and Catherine Playoust.
\newblock The {M}agma algebra system. {I}. {T}he user language.
\newblock {\em J. Symbolic Comput.}, 24(3-4):235--265, 1997.
\newblock Computational algebra and number theory (London, 1993).

\bibitem[BEW98]{MR1625181}
Bruce~C. Berndt, Ronald~J. Evans, and Kenneth~S. Williams.
\newblock {\em Gauss and {J}acobi sums}.
\newblock Canadian Mathematical Society Series of Monographs and Advanced
  Texts. John Wiley \& Sons, Inc., New York, 1998.
\newblock A Wiley-Interscience Publication.

\bibitem[BG13]{MR3156850}
Manjul Bhargava and Benedict~H. Gross.
\newblock The average size of the 2-{S}elmer group of {J}acobians of
  hyperelliptic curves having a rational {W}eierstrass point.
\newblock In {\em Automorphic representations and {$L$}-functions}, volume~22
  of {\em Tata Inst. Fundam. Res. Stud. Math.}, pages 23--91. Tata Inst. Fund.
  Res., Mumbai, 2013.

\bibitem[BK77]{Brumer}
Armand Brumer and Kenneth Kramer.
\newblock The rank of elliptic curves.
\newblock {\em Duke Math. J.}, 44(4):715--743, 1977.

\bibitem[BPP{\etalchar{+}}19]{MR3981316}
Armand Brumer, Ariel Pacetti, Cris Poor, Gonzalo Tornar\'{\i}a, John Voight,
  and David~S. Yuen.
\newblock On the paramodularity of typical abelian surfaces.
\newblock {\em Algebra Number Theory}, 13(5):1145--1195, 2019.

\bibitem[BSPT21]{2001.02263}
Daniel Barrera~Salazar, Ariel Pacetti, and Gonzalo Tornar\'{\i}a.
\newblock On 2-{S}elmer groups and quadratic twists of elliptic curves.
\newblock {\em Math. Res. Lett.}, 28(6):1633--1660, 2021.

\bibitem[Cas66]{MR199150}
J.~W.~S. Cassels.
\newblock Diophantine equations with special reference to elliptic curves.
\newblock {\em J. London Math. Soc.}, 41:193--291, 1966.

\bibitem[Cox13]{MR3236783}
David~A. Cox.
\newblock {\em Primes of the form {$x^2 + ny^2$}}.
\newblock Pure and Applied Mathematics (Hoboken). John Wiley \& Sons, Inc.,
  Hoboken, NJ, second edition, 2013.
\newblock Fermat, class field theory, and complex multiplication.

\bibitem[Del73]{MR0349635}
P.~Deligne.
\newblock Les constantes des \'{e}quations fonctionnelles des fonctions {$L$}.
\newblock In {\em Modular functions of one variable, {II} ({P}roc. {I}nternat.
  {S}ummer {S}chool, {U}niv. {A}ntwerp, {A}ntwerp, 1972)}, Lecture Notes in
  Math., Vol. 349, pages 501--597. Springer, Berlin, 1973.

\bibitem[DLRW20]{MR4158587}
Harris~B. Daniels, \'{A}lvaro Lozano-Robledo, and Erik Wallace.
\newblock Bounds of the rank of the {M}ordell-{W}eil group of {J}acobians of
  hyperelliptic curves.
\newblock {\em J. Th\'{e}or. Nombres Bordeaux}, 32(1):231--258, 2020.

\bibitem[Fun84]{MR779772}
Takeo Funakura.
\newblock On integral bases of pure quartic fields.
\newblock {\em Math. J. Okayama Univ.}, 26:27--41, 1984.

\bibitem[J{\c{e}}d21]{MR4231527}
Tomasz J{\c{e}}drzejak.
\newblock Ranks in the family of hyperelliptic {J}acobians of {$y^2=x^5+ax$}.
\newblock {\em J. Number Theory}, 223:35--52, 2021.

\bibitem[Li19]{MR3934463}
Chao Li.
\newblock 2-{S}elmer groups, 2-class groups and rational points on elliptic
  curves.
\newblock {\em Trans. Amer. Math. Soc.}, 371(7):4631--4653, 2019.

\bibitem[Lom19]{MR3882288}
Davide Lombardo.
\newblock Computing the geometric endomorphism ring of a genus-2 {J}acobian.
\newblock {\em Math. Comp.}, 88(316):889--929, 2019.

\bibitem[MP22]{2210.02716}
Pascal Molin and Aurel Page.
\newblock Computing groups of hecke characters.
\newblock {\em ANTS XV, Aug 2022, Bristol, United Kingdom}, 2022.

\bibitem[NU73]{Namikawa}
Yukihiko Namikawa and Kenji Ueno.
\newblock The complete classification of fibres in pencils of curves of genus
  two.
\newblock {\em Manuscr. Math.}, 9:143--186, 1973.

\bibitem[O'M00]{MR1754311}
O.~Timothy O'Meara.
\newblock {\em Introduction to quadratic forms}.
\newblock Classics in Mathematics. Springer-Verlag, Berlin, 2000.
\newblock Reprint of the 1973 edition.

\bibitem[PAR19]{PARI2}
The PARI~Group, Univ. Bordeaux.
\newblock {\em {PARI/GP version {2.11.1}}}, 2019.
\newblock available from \url{http://pari.math.u-bordeaux.fr/}.

\bibitem[PS97]{MR1465369}
Bjorn Poonen and Edward~F. Schaefer.
\newblock Explicit descent for {J}acobians of cyclic covers of the projective
  line.
\newblock {\em J. Reine Angew. Math.}, 488:141--188, 1997.

\bibitem[PS99]{Poonen-Stoll}
Bjorn Poonen and Michael Stoll.
\newblock The {C}assels-{T}ate pairing on polarized abelian varieties.
\newblock {\em Ann. of Math. (2)}, 150(3):1109--1149, 1999.

\bibitem[Sch95]{MR1326746}
Edward~F. Schaefer.
\newblock {$2$}-descent on the {J}acobians of hyperelliptic curves.
\newblock {\em J. Number Theory}, 51(2):219--232, 1995.

\bibitem[Sch96]{MR1370197}
Edward~F. Schaefer.
\newblock Class groups and {S}elmer groups.
\newblock {\em J. Number Theory}, 56(1):79--114, 1996.

\bibitem[Ser68]{MR0354618}
Jean-Pierre Serre.
\newblock {\em Corps locaux}.
\newblock Publications de l'Universit\'{e} de Nancago, No. VIII. Hermann,
  Paris, 1968.
\newblock Deuxi\`eme \'{e}dition.

\bibitem[Sto01]{MR1829626}
Michael Stoll.
\newblock Implementing 2-descent for {J}acobians of hyperelliptic curves.
\newblock {\em Acta Arith.}, 98(3):245--277, 2001.

\bibitem[YY22]{MR4496098}
Hwajong Yoo and Myungjun Yu.
\newblock Bounds for 2-{S}elmer ranks in terms of seminarrow class groups.
\newblock {\em Pacific J. Math.}, 320(1):193--222, 2022.

\end{thebibliography}


\end{document}